\DeclareFontFamily{U}{mathx}{\hyphenchar\font45}
\DeclareFontShape{U}{mathx}{m}{n}{<-> mathx10}{}
\DeclareSymbolFont{mathx}{U}{mathx}{m}{n}
\DeclareMathAccent{\widebar}{0}{mathx}{"73}
\newtheorem{theorem}{Theorem}[section]
\newtheorem{corollary}[theorem]{Corollary}
\newtheorem{lemma}[theorem]{Lemma}
\newtheorem{proposition}[theorem]{Proposition}
\theoremstyle{definition}
\newtheorem{definition}[theorem]{Definition}
\theoremstyle{remark}
\newtheorem{remark}[theorem]{Remark}
\newtheorem*{example}{Example}
\numberwithin{equation}{section}
\newcommand{\set}[1]{\left\{#1\right\}}
\newcommand{\R}{\mathbb R}
\newcommand{\N}{\mathbb N}
\newcommand{\Z}{\mathbb Z}
\newcommand{\Q}{\mathbb Q}
\newcommand{\eps}{\varepsilon}
\newcommand{\trianglecl}{\widebar{\triangle}}
\newcommand{\couple}[4]{\left(\frac{#1}{#2},\frac{#3}{#4}\right)}
\newcommand{\triple}[3]{\left(\frac{#1}{#3},\frac{#2}{#3}\right)}
\newcommand{\singx}[3]{\left({#1},\frac{#2}{#3}\right)}
\newcommand{\singy}[3]{\left(\frac{#1}{#2},{#3}\right)}
\newcommand{\mat}[2]{\mathfrak{m}\left(\frac{#1}{#2}\right)}
\newcommand{\matt}[1]{\mathfrak{m}\left(#1\right)}
\newcommand{\mattt}[3]{\mathfrak{m}\triple{#1}{#2}{#3}}
\newcommand{\matttx}[3]{\mathfrak{m}\singx{#1}{#2}{#3}}
\newcommand{\mattty}[3]{\mathfrak{m}\singy{#1}{#2}{#3}}
\newcommand{\vet}[3]{\begin{psmallmatrix}#1\\#2\\#3\end{psmallmatrix}}
\newcommand{\Vet}[3]{\begin{pmatrix}#1\\#2\\#3\end{pmatrix}}
\DeclareMathOperator*{\diam}{diam}
\DeclareMathOperator*{\rank}{rank}
\definecolor{dgreen}{RGB}{0,160,0}    
\definecolor{orange}{RGB}{251,111,66} 
\definecolor{lblue}{RGB}{8,180,238}   
\definecolor{DBlu}{rgb}{.1,.1,.7}     
\def\centerarcfill[#1](#2)(#3:#4:#5){\draw[#1] (#2) --
  ($(#2)+({#5*cos(#3)},{#5*sin(#3)})$) arc (#3:#4:#5);}
\begin{document}

\title[]{Representation and coding of rational pairs on a Triangular tree  and
  Diophantine approximation in $\R^2$}%

\author{Claudio Bonanno}%
\address{Dipartimento di Matematica, Universit\`a di Pisa, Largo Bruno
  Pontecorvo 5, 56127 Pisa, Italy} \email{claudio.bonanno@unipi.it}

\author{Alessio Del Vigna}%
\address{Dipartimento di Matematica, Universit\`a di Pisa, Largo Bruno
  Pontecorvo 5, 56127 Pisa, Italy} \email{delvigna@mail.dm.unipi.it}

\thanks{MSC2020: 11B57, 37A44, 37E30\\ C. Bonanno is partially supported by the research
  project PRIN 2017S35EHN$\_$004 ``Regular and stochastic
  behaviour in dynamical systems'' of the Italian Ministry of
  Education and Research, and by the ``Istituto Nazionale di Alta Matematica'' and its
  division ``Gruppo Nazionale di Fisica Matematica''. This research is part of the authors' activity within the
  DinAmicI community, see \url{www.dinamici.org}.}

\begin{abstract}
    In this paper we study the properties of the \emph{Triangular
      tree}, a complete tree of rational pairs introduced in
    \cite{cas}, in analogy with the main properties of the Farey tree
    (or Stern-Brocot tree). To our knowledge the Triangular tree is
    the first generalisation of the Farey tree constructed using the
    mediant operation. In particular we introduce a two-dimensional
    representation for the pairs in the tree, a coding which describes
    how to reach a pair by motions on the tree, and its description in
    terms of $SL(3,\Z)$ matrices. The tree and the properties we study
    are then used to introduce rational approximations of non-rational
    pairs.
\end{abstract}

\maketitle


\section{Introduction} \label{sec:intro}

The theory of multidimensional continued fractions has received
increasing attention in the last years from researchers both in number
theory and in ergodic theory. The origin of multidimensional continued
fraction expansions may be traced back to a letter that Hermite sent
to Jacobi asking for a generalisation of Lagrange's Theorem for
quadratic irrationals to algebraic irrationals of higher degree. It
was for this reason that Jacobi developed what is now called the
Jacobi-Perron algorithm. Unfortunately, despite numerous attempts and
the introduction of many different algorithms, Hermite's question
remains unanswered. We refer the reader to \cite{brentjes} for a
geometric description of the theory of multidimensional continued
fractions. On the other hand, it is well-known that (regular)
continued fraction expansions are related to the theory of dynamical
systems as the expansion of a real number can be obtained by the
symbolic representation of its orbit under the action of the Gauss map
on the unit interval. The dynamical systems approach has led to new
proofs of the Gauss-Kuzmin Theorem, Khinchin's weak law and other
metric results first obtained by Khinchin and L\'evy, also thanks to
the modern results of ergodic theory (see for example
\cite{IK,munday:iet}). In recent years the methods of ergodic theory
have been applied also to maps related to multidimensional continued
fraction algorithms, we refer to \cite{schw-book} for the first
results in this research area. It is also interesting to mention that,
in the opposite direction, number theoretical properties of real
numbers have led to new results in ergodic theory, as the Three Gaps
Theorem for instance, which can be interpreted in terms of return
times for irrational rotations on the circle.

In this paper we continue the work initiated by the authors with Sara
Munday in \cite{cas}, where we have studied the properties of a tree
of rational pairs, here called the \emph{Triangular tree}, which was
introduced as a two-dimensional version of the well-known Farey tree
(or Stern-Brocot tree). The aim of this paper is to show that all the
structures of the Farey tree can be found also in the Triangular tree
and to construct approximations of real pairs using the tree. We believe
that the Triangular tree will be useful for the study of interesting
phenomena related to two-parameter systems, as this is the case for
the Farey tree and one-parameter systems. For recent investigations on higher-dimensional phenomena we refer to \cite{berthe1,berthe2,haynes}.

The paper is structured as follows. In Section \ref{sec:farey-coding}
we recall the construction of the Farey tree and its main properties
based on the continued fraction expansions of real numbers, in
particular the $\{L,R\}$ coding of real numbers which is based on the
relations of the Farey tree with the group $SL(2,\Z)$. In Section
\ref{sec:trianglemaps} we recall the main results of \cite{cas}
concerning the Triangular tree. In particular we recall that it can be
constructed in a dynamical way, by using the Triangle map defined in
\cite{garr} which acts on the triangle
$\triangle\coloneqq\{ (x,y)\in \R^2: 1\ge x\ge y>0\}$ and its slow
version $S$ introduced in \cite{cas}, and in a geometric way, by using
the notion of mediant of two pairs of fractions. The existence of
these two possible constructions is the first basic property of the
Farey tree that has been proved for the Triangular tree in
\cite{cas}. Afterwards, Section \ref{sec:triangle-seq} contains some
preparatory results on the triangle sequences, that is the symbolic
representation of the orbits of real pairs for the Triangle map. These
sequences are the analogous of the continued fraction expansions for
real numbers and the Gauss map, and are thus the two-dimensional
continued fraction expansions of real pairs generated by the Triangle
map. It is known that there are cases in which different real pairs
have the same triangle sequence, this is discussed in details in
Section \ref{sec:triangle-seq}.

Sections \ref{sec:coding} and \ref{sec:rationals} contain the main
results of the paper. First we use the slow map $S$ and its local
inverses to introduce in Definition \ref{repres-interior} a unique
two-dimensional representation for rational pairs in $\trianglecl$,
thus improving the expansions obtained by using the Triangle map. For
non-rational pairs the two-dimensional representation is that obtained
by the Triangle map. Then in Theorem \ref{prop:interiorLRI} we
introduce a coding for rational pairs in the Triangular tree in terms
of possible motions in the tree, in analogy with the coding of the
Farey tree. This coding can be described also in terms of $SL(3,\Z)$
matrices defined in \eqref{le-tre-matrici}. This coding and its
descriptions are useful also to introduce approximations of real
pairs, and of non-rational pairs in particular, in terms of the
rational pairs on the Triangular tree. The definition of the
approximations together with some examples are described in Section
\ref{sec:approx}. Finally, in Section \ref{sec:speed} we study the
speed of the approximations introduced in the sense of the
simultaneous approximations of couples of real numbers. We give
results only for two classes of non-rational pairs, those with finite
triangle sequence and those corresponding to fixed points of the
Triangle map, leaving further developments for future research.


\section{The Farey coding}
\label{sec:farey-coding}

In this section we recall the construction of the Farey tree, the
coding it induces for the rationals in $(0,1)$, and its connection
with the continued fraction expansion of real numbers.

The \emph{Farey tree} is a binary tree which contains all the rational
numbers in the interval $(0,1)$ and can be generated in a dynamical
way using the Farey map, and in an arithmetic way using the notion of
mediant between two fractions.

The \emph{Farey map} is the map $F:[0,1]\rightarrow [0,1]$ defined to
be
\[
    F(x) \coloneqq
    \begin{cases}
        \frac{x}{1-x}\, , & \text{if } 0\leq x\leq \frac 12\\[0.2cm]
        \frac{1-x}{x}\, , & \text{if } \frac 12 \leq x \leq 1\\
    \end{cases}
\]
Denoting with $F_0:[0,\frac 12]\rightarrow [0,1]$ and
$F_1:[\frac 12,1]\rightarrow [0,1]$ its two branches, $F$ admits two local inverses,
$\psi_0\coloneqq F_0^{-1}$ and $\psi_1\coloneqq F_1^{-1}$, given by
\begin{equation}\label{farey-inversi}
    \psi_0(x) = \frac{x}{1+x}\quad\text{and}\quad
    \psi_1(x)=\frac{1}{1+x}.
\end{equation}
The Farey tree is then generated using the Farey map by setting
$\mathcal{L}_0 \coloneqq \{\frac 12\}$ for the root of the tree, and
setting recursively
$\mathcal{L}_n\coloneqq F^{-n}\left(\frac12\right)$ for all $n\ge
1$. Each level of the tree is written by ascending order as shown in
Figure \ref{fig:Farey}. The connections between the fractions of
different levels are explained below. It is known that the Farey tree
contains all the rational numbers in the interval $(0,1)$, that is
$\bigcup_{n=0}^{+\infty} \mathcal{L}_n =\bigcup_{n=0}^{+\infty}
F^{-n}\left(\frac 12\right) = \Q\cap (0,1)$, and each rational number
appears in the tree exactly once.

\begin{figure}[h]
    \begin{tikzpicture}[
    level 1/.style = {sibling distance=4cm},
    level 2/.style = {sibling distance=2cm},
    level 3/.style = {sibling distance=1cm},
    level distance          = 1cm,
    edge from parent/.style = {draw},
    scale=1.2
    ]

    \foreach \n in {0,1,2,3} {
      \pgfmathsetmacro\p{\n}
      \node at (5,-\n) {$\mathcal{L}_{\pgfmathprintnumber\p}$};
    }

    \node {$\frac 12$}
    child{
      node {$\frac 13$}
      child{
        node {$\frac 14$}
        child{
          node {$\frac 15$}
        }
        child{
          node {$\frac 27$}
        }
      }
      child{
        node {$\frac 25$}
        child{
          node {$\frac 38$}
        }
        child{
          node {$\frac 37$}
        }
      }
    }
    child{
      node {$\frac 23$}
      child{
        node {$\frac 35$}
        child{
          node {$\frac 47$}
        }
        child{
          node {$\frac 58$}
        }
      }
      child{
        node {$\frac 34$}
        child{
          node {$\frac 57$}
        }
        child{
          node {$\frac 45$}
        }
      }
    };
\end{tikzpicture}
    \caption{The first four levels of the Farey tree.}\label{fig:Farey}
\end{figure}
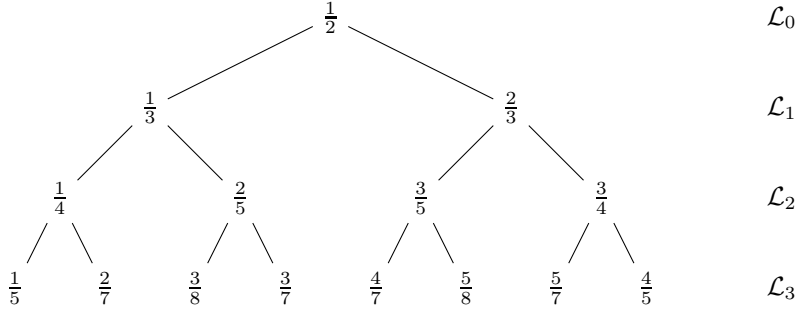

We now describe the second way to construct the levels of the Farey
tree. Let us consider the \emph{Stern-Brocot sets}
$(\mathcal{F}_n)_{n\geq -1}$, with
$\mathcal{F}_{-1}\coloneqq\left\{\frac01, \frac11\right\}$, and for
all $n\ge 0$ let $\mathcal{F}_n$ be obtained from $\mathcal{F}_{n-1}$
by inserting the mediant of each pair of neighbouring fractions. We
recall that the mediant of two fractions $\frac pq$ and $\frac rs$
with $\frac pq<\frac rs$ is
\[
    \frac pq \oplus \frac rs \coloneqq \frac{p+r}{q+s}
\]
and that $\frac pq < \frac pq \oplus \frac rs < \frac rs$. We say that
$\frac pq \oplus \frac rs$ is the \emph{child} of $\frac pq$ and
$\frac rs$, which are its \emph{left} and \emph{right parent},
respectively. It can be easily shown that two fractions
$\frac pq < \frac rs$ are neighbours in a Stern-Brocot set if and only
if $qr-ps=1$ (see, for instance, \cite{hardy}). Moreover, since the
ancestors $\frac 01$ and $\frac 11$ are in lowest terms, it follows
that all the fractions obtained through the mediant operation appear
in lowest terms. The first sets $\mathcal{F}_n$ are
\[
    \mathcal{F}_0 = \set{\frac 01, \frac 12, \frac 11},\quad
    \mathcal{F}_1 = \set{\frac 01, \frac 13, \frac 12, \frac 23, \frac
      11},\quad \mathcal{F}_2 = \set{\frac 01, \frac 14, \frac 13,
      \frac 25, \frac 12, \frac 35, \frac 23, \frac 34, \frac 11}.
\]
The levels of the Farey tree are then given by
$\mathcal{L}_n=\mathcal{F}_n\setminus\mathcal{F}_{n-1}$ for all
$n\geq 0$. Two fractions in Figure \ref{fig:Farey} are connected if
one of the fractions is a parent of the other. Notice that a fraction
in a level $\mathcal{L}_n$ has a parent in the level
$\mathcal{L}_{n-1}$ and the other parent in a level $\mathcal{L}_m$
with $m < n-1$.

For later use we recall the notion of \emph{rank} of a rational number
$x$ in the open unit interval: $\rank(x)=r$ if and only if
$x\in \mathcal{L}_{r}$. For more details on the Farey tree, we refer
to \cite{BI}.

\subsection{The Farey coding}
We now recall the $\{L,R\}$ coding of rational numbers (see also
\cite{isola:cf,BI}). Let $\frac pq\in \Q\cap (0,1)$ be a fraction
reduced in lowest terms. Since $\frac pq$ belongs to a unique level of
the Farey tree, we can write in a unique way
\[
    \frac pq = \frac lm \oplus \frac rs,
\]
with $rm-ls=1$. That is, $\frac pq$ is generated by
taking the mediant between $\frac lm$ and $\frac rs$, which are thus
its parents in the tree. We associate to the fraction $\frac pq$ the
matrix
\[
    \mat{p}{q} \coloneqq \begin{pmatrix} r&l\\s&m\end{pmatrix}\in SL(2,\Z),
\]
Introducing the two $SL(2,\Z)$ matrices
\begin{equation} \label{le-due-matrici}
    L\coloneqq \begin{pmatrix} 1&0\\1&1\end{pmatrix}
    \quad\text{and}\quad
    R\coloneqq \begin{pmatrix} 1&1\\0&1\end{pmatrix},
\end{equation}
the left and right children of $\frac pq=\frac{l+r}{m+s}$ on the tree,
that are $\frac lm \oplus \frac{l+r}{m+s}=\frac{2l+r}{2m+s}$ and
$\frac{l+r}{m+s}\oplus \frac rs = \frac{l+2r}{m+2s}$ respectively, can be
obtained by right multiplication with the matrices $L$ and $R$
respectively, since
\[
    \mat{2l+r}{2m+s}=\mat{p}{q}L = \begin{pmatrix} l+r&l\\m+s&m\end{pmatrix}
    \quad\text{and}\quad
    \mat{l+2r}{m+2s}=\mat{p}{q}R = \begin{pmatrix} r&l+r\\s&m+s\end{pmatrix}
\]
In other words, the action of $L$ and $R$ given by the
right matrix multiplication corresponds to travelling downward along the Farey
tree by moving to the left or to the right, respectively. We finally
notice that $L=\mat{1}{2}$ is the matrix of the root of the Farey
tree.

\begin{proposition}[\cite{isola:cf}]
    Let $x\in \Q\cap (0,1)$ with $\rank(x)=r$, and let
    $x=[a_1,\ldots,a_n]$ be its continued fraction expansion. Then
    \[
    \rank(x)= \sum_{i=1}^n a_i -2
    \]
    and the matrix associated
    to $x$ is
   \begin{equation}\label{m(x)cf}
        \matt{x} = \mat{1}{2} \prod_{i=1}^r M_i =  \begin{cases}
        L L^{a_1-1}R^{a_2}\cdots L^{a_{n-1}}R^{a_n-1} & \text{if $n$ is even}\\
        L L^{a_1-1}R^{a_2}\cdots R^{a_{n-1}}L^{a_n-1} & \text{if $n$ is odd}
    \end{cases}
   \end{equation}
   where $M_i=L$ if the $i$-th turn along the path joining $\frac 12$
   with $x$ on the Farey tree goes left, and $M_i=R$ if it goes
   right\footnote{The leftmost symbol $L$ denotes $\frac 12$ and is
     not associated to a move on the Farey tree.}.
\end{proposition}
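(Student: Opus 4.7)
The plan is to prove the rank formula and the matrix form separately, each by induction.

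\textbf{Rank formula.} The key tool is the defining identity $\mathcal{L}_r = F^{-r}(1/2)$, which gives $\rank(F(x)) = \rank(x) - 1$ whenever $x \neq 1/2$. A direct branch-by-branch computation shows that for $x = [a_1, \ldots, a_n]$,
\[
F(x) = \begin{cases} [a_1 - 1, a_2, \ldots, a_n] & \text{if } a_1 \geq 2, \\ [a_2, \ldots, a_n] & \text{if } a_1 = 1, \end{cases}
\]
so $\sum_i a_i$ drops by exactly $1$ in both cases. Combined with the base case $x = 1/2$ (rank $0$, $\sum a_i = 2$), induction on $r$ yields $\rank(x) = \sum_i a_i - 2$.

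\textbf{Matrix form.} The first equality $\matt{x} = \mat{1}{2}\prod_{i=1}^r M_i$ follows at once by induction on $r$ from the parent-child identities $\matt{p/q}\cdot L = \matt{\text{left child}}$ and $\matt{p/q}\cdot R = \matt{\text{right child}}$ established just before the statement. The real content is the second equality, which identifies the descending word $M_1 \cdots M_r$ with the CF-indexed pattern $L^{a_1 - 1} R^{a_2} L^{a_3} \cdots$. I would prove it by induction on $n$, using the following reformulation: decomposed into maximal runs $L^{b_1} R^{b_2} \cdots$, the word $M_1 \cdots M_r$ satisfies $(a_1, \ldots, a_n) = (b_1 + 1, b_2, \ldots, b_{n-1}, b_n + 1)$. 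Descending one more step on the tree appends a single $L$ or $R$, which either extends the current last run (corresponding, via the CF-action of $F$ above, to incrementing the last partial quotient by one) or opens a fresh run of length one (corresponding to appending a new partial quotient equal to one); the parity of $n$ determines which of the two behaviors is attached to an $L$ versus an $R$, since the last letter of the path alternates $L, R, L, R, \ldots$ as $n$ grows. This recursive description reproduces the parity case split in the statement.

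\textbf{Main obstacle.} The subtle bookkeeping happens at the two endpoints of the word: the leading factor $L = \mat{1}{2}$ must be absorbed into the first run $L^{a_1 - 1}$, and the path stops at $x$ rather than at a child of $x$, forcing the final exponent to be $a_n - 1$ rather than $a_n$. A related difficulty is the orientation-reversing nature of the right branch of the Farey map on $[1/2, 1]$, which induces a left-right flip when descending through the subtree rooted at $2/3$; aligning this flip consistently with the parity of $n$ is the most delicate step of the induction, and it is precisely what forces the case distinction between $n$ even and $n$ odd visible in the statement.
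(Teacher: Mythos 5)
A preliminary remark: the paper itself gives no proof of this proposition (it is quoted from \cite{isola:cf}), so there is no internal argument to compare yours against; I assess it on its own terms. Your first half is sound: the branchwise action of $F$ on partial quotients, the fact that $F$ lowers the rank by one (since $\mathcal{L}_r=F^{-r}(\frac12)$ and each rational occurs once), and the base case $\frac12=[2]$ do give $\rank(x)=\sum_i a_i-2$ by induction. The first equality $\matt{x}=\mat{1}{2}\prod_i M_i$ is likewise immediate from the parent--child identities, as you say.

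The gap is in the identification of the descending word with the pattern $L^{a_1-1}R^{a_2}\cdots$. Your induction step requires knowing, for a node $x=[a_1,\ldots,a_n]$, the continued fractions of its \emph{two children} and which of them is the left one: the parity-dependent fact that (for $n$ odd, $a_n\ge 2$) the $L$-child is $[a_1,\ldots,a_n+1]$ and the $R$-child is $[a_1,\ldots,a_{n-1},a_n-1,2]$, and the reverse for $n$ even. You justify this ``via the CF-action of $F$ above'', but that action modifies the \emph{first} partial quotient, i.e.\ it operates at the top of the path, whereas descending appends a letter at the bottom and modifies the \emph{last} quotient; moreover $F$ does not send a vertex to its tree-parent (e.g.\ $F(\frac25)=\frac23$, while the parent of $\frac25$ in the tree is $\frac13$), so the dictionary used for the rank formula cannot be invoked here. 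Your parenthetical glosses also conflate the two children: literally ``appending a new partial quotient equal to one'' gives $[a_1,\ldots,a_n,1]=[a_1,\ldots,a_n+1]$, which is the child obtained by \emph{extending} the run, whereas under your own correspondence $(a_1,\ldots,a_n)=(b_1+1,b_2,\ldots,b_{n-1},b_n+1)$ opening a fresh run produces $[a_1,\ldots,a_{n-1},a_n-1,2]$. To close the argument you must actually prove the children description, for instance by showing that the columns of $L\,L^{a_1-1}R^{a_2}\cdots$ are the Stern--Brocot parents of $[a_1,\ldots,a_n]$ (continuant recurrences together with the neighbour criterion $qr-ps=1$ recalled in Section 2), or by establishing the self-similarity of the tree under the inverse branches, namely that the path word of $\psi_0(y)$ is $L$ followed by the word of $y$, and the path word of $\psi_1(y)$ is $R$ followed by the word of $y$ with $L$ and $R$ interchanged, and then inducting on the rank from the front; this orientation-reversing flip is exactly the step you name as delicate but leave undone. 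Finally, note the boundary case $n=1$: the path to $[a_1]$ is $L^{a_1-2}$, so the block correspondence $(a_1)=(b_1+1)$ fails there and the displayed formula must be read as $\mat{1}{2}L^{a_1-2}$, in accordance with the remark the authors make after Lemma \ref{lemma:boundaryLR} for the two-dimensional analogue.
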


The digits of the continued fraction expansion also have a dynamical
meaning in terms of the Gauss map, hence of the Farey map. In fact we
recall that the Gauss map is a \emph{fast version} of the Farey map,
precisely the Gauss map is the jump transformation of $F$ on the
interval $(\frac 12,1]$. If $x=[a_1,\ldots,a_n]$ we have
\[
    x=\psi_0^{a_1-1}\psi_1\cdots \psi_0^{a_n-1}\psi_1(0)
\]
and $(a_i)_{i=1,\,\ldots,\,n}$ is the sequence of return times to
$(\frac 12,1]$ of the orbit of $x$ under $F$. Since
$\psi_0 \psi_1(0) = \frac 12$, we can also express explicitly every
rational number $x\in (0,1)$ as a backward image of $\frac 12$ under
the Farey map. If $a_n>1$ we have
$x=\psi_0^{a_1-1}\psi_1\cdots \psi_0^{a_n-2}\left(\frac12\right)$, and
if $a_n=1$ we have
$x=\psi_0^{a_1-1}\psi_1\cdots \psi_0^{a_{n-1}-1}\left(\frac12\right)$.

\begin{example}
    Let $x=\frac{7}{12}$. This rational number appears at the fourth
    level of the Farey tree, so $x=\frac 7{12}\in \mathcal{L}_4$
    and $\rank(x)=4$. Starting from the root $\frac 12$, the path to
    reach $x$ on the Farey tree is $RLLR$, thus
    $\mat{7}{12} = LRLLR=LRL^2R$. Indeed
    \[
        LRL^2R = \begin{pmatrix} 3&4\\5&7\end{pmatrix}
    \]
    and $\frac 47\oplus \frac 35=\frac 7{12}$. Moreover we have
    $\frac 7{12} = [1,1,2,2]$, so that
    $\frac 7{12}=\psi_1\psi_1\psi_0\psi_1\left(\frac 12\right)$.
\end{example}

The coding for all the real numbers in the closed unit interval
extends the above construction and is given by a map
$\pi: [0,1]\rightarrow \{L,R\}^\N$ which associate to each
$x\in [0,1]$ an infinite sequence $\pi(x)$ over the alphabet
$\{L,R\}$. First we set
\[
    \pi\left(\frac{0}{1}\right) = L^\infty
    \quad\text{and}\quad
    \pi\left(\frac{1}{1}\right) = R^\infty.
\]
Let $x\in \Q\cap (0,1)$, so that it has a finite continued fraction
expansion, say $x=[a_1,\ldots,a_n]$. Then note that there exists
two infinite paths on the Farey tree which agree down to the node of
$x$. Both starts with the finite sequence coding the path from the
root $\frac 12$ to reach $x$, according to~\eqref{m(x)cf} and
terminating with either $RL^\infty$ or $LR^\infty$.  We let the
infinite sequence terminate with $RL^\infty$ or $LR^\infty$ according
to whether the number of partial quotients of $x$ is even or odd. Thus
for $x=[a_1,\ldots,a_n]$ we set
\[
    \pi(x)=\begin{cases}
        L^{a_1}R^{a_2}\cdots L^{a_{n-1}}R^{a_n}L^\infty & \text{if $n$ is even}\\
        L^{a_1}R^{a_2}\cdots R^{a_{n-1}}L^{a_n}R^\infty & \text{if $n$ is odd}
    \end{cases}
\]
In case $x\in (\R\setminus\Q)\cap (0,1)$ the continued fraction
expansion is infinite, say $x=[a_1,a_2,\ldots]$, thus in this case
we simply define
\[
    \pi(x)=L^{a_1}R^{a_2}L^{a_3}R^{a_4}\cdots.
\]


\section{Triangle maps and the Triangular tree} \label{sec:trianglemaps}

\subsection{The setting}
The Triangle Map has been introduced in \cite{garr} to define a
two-dimensional analogue of the continued fraction algorithm. Let us
consider the triangle
\[
    \triangle \coloneqq \set{(x,y)\in \R^2 \,:\, 1 \ge x \ge y > 0},
\]
and the pairwise disjoint subtriangles
$\triangle_k \coloneqq \set{(x,y)\in \triangle \,:\, 1-x-ky \ge 0 >
  1-x-(k+1)y}$, with $k\geq 0$, and the line segment
$\Lambda \coloneqq \set{(x,0) \,:\, 0\leq x\leq 1}$. Note that
$\trianglecl = \bigcup_{k\ge 0} \triangle_k \cup \Lambda$ (see
Figure~\ref{fig:partition}). We also introduce
\[
    \Sigma\coloneqq \trianglecl \cap \{x=y\}
    \quad\text{and}\quad
    \Upsilon\coloneqq \trianglecl\cap \{x=1\},
\]
the slanting and the vertical side of $\triangle$, respectively. The
Triangle Map $T:\triangle \to \trianglecl$ is then defined to be
\[
    \label{eq-triangle}
    T(x,y) \coloneqq \left( \frac yx, \frac{1-x-ky}{x} \right) \quad
    \text{for }(x,y)\in \triangle_k.
\]
The map $T$ generates an expansion associated to each point of
$\triangle$, the so-called \emph{triangle sequence}. In particular, to
a point $(x,y)\in \triangle$ we associate the sequence of non-negative
integers $[\alpha_0,\alpha_1,\alpha_2,\ldots]$ if and only if
$T^k(x,y)\in \triangle_{\alpha_k}$ for all $k\geq 0$. In case
$T^{k_\ast}(x,y)\in \Lambda$ for some $k_\ast>0$ then we say that the
triangle sequence terminates. An important result of \cite{garr} is
that pairs of rational numbers have a finite triangle
sequence. However, the converse is not true: also non-rational points
can have a finite triangle sequence and actually there are entire line
segments with every point having the same triangle sequence. As it is
clear from the definition, note that if $(x, y)$ has triangle sequence
$[\alpha_0,\alpha_1,\alpha_2,\ldots]$ then $T(x, y)$ has triangle
sequence $[\alpha_1,\alpha_2,\ldots]$. In other words, the Triangle
Map acts on triangle sequences as the left shift, exactly as the Gauss
map does for the continued fraction expansions.

\begin{figure}[h]
    \begin{tikzpicture}[scale=2.5]
    \def\s{2}
    \def\o{0.075}

    \draw[thin] (0,0) node[below] {\footnotesize $(0,0)$} -- (\s,0)
    node[below] {\footnotesize $(1,0)$} -- (\s,\s) node[above]
    {\footnotesize $(1,1)$}-- cycle;

    \draw[very thick,fill=gray!20] (\s,0) -- (1/3*\s,1/3*\s) -- (1/4*\s,1/4*\s);
    \draw[very thick,dashed] (\s,0) -- (1/4*\s,1/4*\s);

    \foreach \x in {2,3,4,5,6}
    {
      \draw[thin] (1/\x*\s,1/\x*\s) -- (\s,0);
    }

    \foreach \x in {0,1,2,3,4}
    {
      \draw[very thin,gray!70] ({1/2*(1/(\x+1)+1/(\x+2))*\s+\o},{1/2*(1/(\x+1)+1/(\x+2))*\s}) --
      ({1/2*(1/(\x+1)+1/(\x+2))*\s-1.5*(\x+2)*\o},{1/2*(1/(\x+1)+1/(\x+2))*\s+1/2\o})
      node[black,left] {\footnotesize $\triangle_\x$};
    }
\end{tikzpicture}
    \caption{Partition of $\triangle$ into
      $\{\triangle_k\}_{k\geq 0}$.}\label{fig:partition}
\end{figure}

The analogous of the Farey map in this two-dimensional setting has
been introduced in \cite{cas}. Let $\{\Gamma_0,\Gamma_1\}$ be the
partition of $\trianglecl$ such that $\Gamma_0 \coloneqq \triangle_0$
and $\Gamma_1 \coloneqq \trianglecl\setminus \Gamma_0$. The map
$S: \trianglecl \rightarrow \trianglecl$ is then defined to be
\begin{equation}\label{eq-slow}
    S(x,y) \coloneqq
    \begin{cases}
        \left(\frac yx,\,\frac{1-x}{x}\right)\, , & (x,y)\in\Gamma_0\\[0.2cm]
        \left(\frac{x}{1-y},\,\frac{y}{1-y}\right)\, , & (x,y)\in\Gamma_1
    \end{cases}.
\end{equation}
Notice that $S$ maps $\triangle_{k}$ onto $\triangle_{k-1}$ for all
$k\ge 1$ and that $S(\Gamma_0) \cup \Sigma =
S(\Gamma_1)=\trianglecl$. As a consequence, if $(x,y)\in \triangle_k$
for some $k\geq 0$ then
\begin{equation}\label{eq:T-jumpS}
    T(x,y) = S^{k+1}(x,y) = S|_{\Gamma_0}\circ S|_{\Gamma_1}^k (x,y).
\end{equation}
In other words, the Triangle Map $T$ is the jump transformation of $S$
on the set $\Gamma_0$, as the Gauss map is the jump transformation of
the Farey map on $(\frac 12,1]$. Thus the map $S$ can be
thought of as a ``slow version'' of the Triangle Map $T$.

The map $S$ also induces a coding for the points of the triangle
$\triangle$.  In particular, if the triangle sequence of $(x,y)$ is
$[\alpha_0,\alpha_1,\ldots]$ then the itinerary under $S$ of a
point $(x,y)\in \triangle$ with respect to the partition
$\{\Gamma_0,\Gamma_1\}$ is
\[
    (\underbrace{1,\,\ldots,\,1}_{\alpha_0},\,0,
    \,\underbrace{1,\,\ldots,\,1}_{\alpha_1},\,0,\,\ldots).
\]

Many properties of the map $S$ have been proved in \cite{cas}: $S$ is
ergodic with respect to the Lebesgue measure, it preserves the
infinite Lebesgue-absolutely continuous measure with density
$\frac{1}{xy}$, and it is pointwise dual ergodic. Finally, the role of
the map $S$ as a two-dimensional version of the Farey map is confirmed
by the construction of a complete tree of rational pairs, the
\emph{Triangular tree}, by using the inverse branches of $S$, in the
same way as the Farey tree is generated by the Farey map, and then,
equivalently, by a generalised mediant operation. In
Section~\ref{sec:tree} we recall the main steps of this construction.

\subsection{Construction of the Triangular tree}
\label{sec:tree}

We now briefly recap the construction of the \emph{Triangular tree} and its
main properties, following \cite[Section~5]{cas}. The two inverse
branches of the map $S$ are
\begin{equation} \label{def-fi0}
    \phi_0 \coloneqq (S|_{\Gamma_0})^{-1} : \trianglecl\setminus
    \Sigma \rightarrow \Gamma_0,\quad \phi_0(x,y) =
    \left(\frac{1}{1+y},\ \frac{x}{1+y} \right)
\end{equation}
and
\begin{equation} \label{def-fi1}
    \phi_1\coloneqq (S|_{\Gamma_1})^{-1} : \trianglecl \rightarrow
    \Gamma_1,\quad \phi_1(x,y) = \left( \frac{x}{1+y},\
      \frac{y}{1+y}\right).
\end{equation}
We then introduce the map
\begin{equation}  \label{def-fi2}
    \phi_2: \Sigma \to \Lambda, \quad \phi_2(x,x)\coloneqq (x,0)
\end{equation}
and restrict $\phi_1$ to the set
$\trianglecl\setminus \Lambda=\triangle$. The maps $\phi_0$ and
$\phi_1$ so modified, and the map $\phi_2$ form all together the set of
local inverses of a map $\tilde S:\trianglecl \rightarrow \trianglecl$
which coincides with $S$ on $\triangle$, and satisfies
$\tilde S(x,0)=(x,x)$ on $\Lambda$. Thus the maps $S$ and $\tilde S$
coincide up to a zero-measure set.

The levels of the Triangular tree will be denoted by $(\mathcal{T}_n)_{n\geq
  -1}$. We also use the notation $\mathcal{B}_n \coloneqq
\mathcal{T}_n\cap \partial\triangle$ and $\mathcal{I}_n\coloneqq
\mathcal{T}_n\cap \accentset{\circ}{\triangle}$ for the \emph{boundary
  points} and the \emph{interior points} of the $n$-th level of the
tree, respectively. We start by setting
\[
    \mathcal{T}_{-1} \coloneqq \left\{(0,0),\,(1,0),\,(1,1)\right\}
    \quad\text{and}\quad
    \mathcal{T}_{0} \coloneqq \left\{ \singy{1}{2}{0},\,
      \singx{1}{1}{2},\, \triple{1}{1}{2} \right\}.
\]
We now describe precisely how the levels of the tree are generated, by
showing all the possibilities for taking counterimages depending on
the location of the point in $\trianglecl$ (see
Figure~\ref{fig:tree-maps} for reference).
\begin{enumerate}[label={\upshape(R\arabic*)},wide=0pt,labelsep=0.2cm,leftmargin=*]
  \item\label{rule1} An interior point
    $\triple{p}{r}{q}\in \mathcal{I}_n$ generates the two interior
    points $\triple{q}{p}{r+q}$ and $\triple{p}{r}{r+q}$ in
    $\mathcal{I}_{n+1}$, through the application of $\phi_0$ and
    $\phi_1$, respectively.
  \item\label{rule2} A boundary point
    $\triple{p}{p}{q}\in \mathcal{B}_n$ generates the point
    $\singy{p}{q}{0}\in \mathcal{B}_n$ through the application of
    $\phi_2$ and the boundary point
    $\triple{p}{p}{p+q}\in \mathcal{B}_{n+1}$ through the application
    of $\phi_1$.
  \item\label{rule3} A boundary point
    $\singy{p}{q}{0}\in \mathcal{B}_n$ generates the point
    $\singx{1}{p}{q}\in \mathcal{B}_n$ through the application of
    $\phi_0$.
  \item\label{rule4} A boundary point
    $\singx{1}{p}{q}\in \mathcal{B}_n$ generates the boundary point
    $\triple{q}{q}{p+q}\in \mathcal{B}_{n+1}$ and the interior point
    $\triple{q}{p}{p+q}\in\mathcal{I}_{n+1}$, through the application
    of $\phi_0$ and $\phi_1$, respectively.
\end{enumerate}
Note that, conversely to the Farey tree, taking a counterimage does
not necessarily imply a change in the level of the tree.

We now describe the geometric way to obtain the same two-dimensional
tree of rational pairs constructed above by counterimages (see Figure
\ref{fig:triangle_levels} for reference). We define the mediant of two
couples of fractions $\triple{p}{r}{q}$ and $\triple{p'}{r'}{q'}$ as
\[
    \triple{p}{r}{q} \oplus \triple{p'}{r'}{q'} \coloneqq
    \couple{p+p'}{q+q'}{r+r'}{q+q'}.
\]
Note that we require that the two fractions of each couple have the
same denominator, so that the mediant lies on the line segment joining
the two points it is computed from. We further assume that the two
fractions of each couple are reduced to their least common
denominator.

\begin{figure}[h]
    \begin{tikzpicture}[scale=10]
    \def\s{2}
    \def\o{0.075}
    \def\l{0.06}
    \draw[very thick] (0,0) -- (1,0) -- (1,1) -- cycle;

    \node[right] at (-\l,1-\o) {\textcolor{blue}{$\diamond$} : points of $\mathcal{T}_{-1}$};
    \node[right] at (-\l,1-\l-\o) {\textcolor{red}{\hspace{-0.05cm}\footnotesize$\triangle$\hspace{-0.05cm}} : points of $\mathcal{T}_{0}$};
    \node[right] at (-\l,1-2*\l-\o) {\textcolor{dgreen}{$\star$} : points of $\mathcal{T}_{1}$};
    \node[right] at (-\l,1-3*\l-\o) {$\circ$ : points of $\mathcal{T}_{2}$};
    \node[right] at (-\l,1-4*\l-\o) {\textcolor{gray}{$\bullet$} : points of $\mathcal{T}_3$};

    \node[blue] at (0,0) {\huge $\diamond$};
    \node[blue] at (1,0) {\huge $\diamond$};
    \node[blue] at (1,1) {\huge $\diamond$};
    \node[blue,below] at (0,0) {$(0,0)$};
    \node[blue,below] at (1,0) {$(1,0)$};
    \node[blue,above] at (1,1) {$(1,1)$};

    \node[red] at (1/2,0)   {$\triangle$};
    \node[red] at (1,1/2)   {$\triangle$};
    \node[red] at (1/2,1/2) {$\triangle$};

    \draw (1,0) -- (1/2,1/2);
    \node[dgreen] at (1/3,0)   {\huge $\star$};
    \node[dgreen] at (2/3,0)   {\huge $\star$};
    \node[dgreen] at (1,1/3)   {\huge $\star$};
    \node[dgreen] at (1,2/3)   {\huge $\star$};
    \node[dgreen] at (1/3,1/3) {\huge $\star$};
    \node[dgreen] at (2/3,2/3) {\huge $\star$};
    \node[dgreen] at (2/3,1/3) {\huge $\star$};

    \draw (1,1) -- (2/3,1/3);
    \draw (1,0) -- (1/3,1/3);
    \node at (1/4,0) {\huge $\circ$};
    \node at (2/5,0) {\huge $\circ$};
    \node at (3/5,0) {\huge $\circ$};
    \node at (3/4,0) {\huge $\circ$};
    \node at (1,1/4) {\huge $\circ$};
    \node at (1,2/5) {\huge $\circ$};
    \node at (1,3/5) {\huge $\circ$};
    \node at (1,3/4) {\huge $\circ$};
    \node at (1/4,1/4) {\huge $\circ$};
    \node at (2/5,2/5) {\huge $\circ$};
    \node at (3/5,3/5) {\huge $\circ$};
    \node at (3/4,3/4) {\huge $\circ$};
    \node at (3/5,2/5) {\huge $\circ$};
    \node at (3/4,1/4) {\huge $\circ$};
    \node at (3/4,2/4) {\huge $\circ$};
    \node at (2/4,1/4) {\huge $\circ$};

    \draw (1,0) -- (1/4,1/4);
    \draw (1/2,1/2) -- (1/2,1/4);
    \draw (1/2,1/2) -- (3/4,1/2);
    \draw (1,1) -- (3/4,1/4);
    \node[gray] at (1/5,0) {\huge $\bullet$};
    \node[gray] at (2/7,0) {\huge $\bullet$};
    \node[gray] at (3/8,0) {\huge $\bullet$};
    \node[gray] at (3/7,0) {\huge $\bullet$};
    \node[gray] at (4/7,0) {\huge $\bullet$};
    \node[gray] at (5/8,0) {\huge $\bullet$};
    \node[gray] at (5/7,0) {\huge $\bullet$};
    \node[gray] at (4/5,0) {\huge $\bullet$};
    \node[gray] at (1,1/5) {\huge $\bullet$};
    \node[gray] at (1,2/7) {\huge $\bullet$};
    \node[gray] at (1,3/8) {\huge $\bullet$};
    \node[gray] at (1,3/7) {\huge $\bullet$};
    \node[gray][gray] at (1,4/7) {\huge $\bullet$};
    \node[gray] at (1,5/8) {\huge $\bullet$};
    \node[gray] at (1,5/7) {\huge $\bullet$};
    \node[gray] at (1,4/5) {\huge $\bullet$};
    \node[gray] at (1/5,1/5) {\huge $\bullet$};
    \node[gray] at (2/7,2/7) {\huge $\bullet$};
    \node[gray] at (3/8,3/8) {\huge $\bullet$};
    \node[gray] at (3/7,3/7) {\huge $\bullet$};
    \node[gray] at (4/7,4/7) {\huge $\bullet$};
    \node[gray] at (5/8,5/8) {\huge $\bullet$};
    \node[gray] at (5/7,5/7) {\huge $\bullet$};
    \node[gray] at (4/5,4/5) {\huge $\bullet$};
    \node[gray] at (4/7,3/7) {\huge $\bullet$};
    \node[gray] at (5/8,3/8) {\huge $\bullet$};
    \node[gray] at (5/7,2/7) {\huge $\bullet$};
    \node[gray] at (4/5,1/5) {\huge $\bullet$};
    \node[gray] at (3/7,2/7) {\huge $\bullet$};
    \node[gray] at (3/5,1/5) {\huge $\bullet$};
    \node[gray] at (5/7,3/7) {\huge $\bullet$};
    \node[gray] at (4/5,3/5) {\huge $\bullet$};
    \node[gray] at (2/3,1/2) {\huge $\bullet$};
    \node[gray] at (1/2,1/3) {\huge $\bullet$};
    \node[gray] at (4/5,2/5) {\huge $\bullet$};
    \node[gray] at (2/5,1/5) {\huge $\bullet$};

    \node[right] at (0.91,0.1) {};
    \node[above,rotate=atan(2)] at (0.87,2*0.87-1) {};
    \node[above,rotate=-atan(1/2)] at (0.77,-1/2*0.77+1/2) {};
    \node at (1/2+0.1,1/2+0.025) {};
    \node[rotate=90] at (1/2-0.025,1/2-0.1) {};
    \node[rotate=atan(-1/3)] at (0.7,-1/4*0.7+1/4) {};
    \node[below,rotate=atan(3)] at (0.9,3*0.9-2) {};
\end{tikzpicture}
    \caption{The first four levels of the Triangular tree as points in
      $\trianglecl$ with the sides of the triangles of the
      partitions.}\label{fig:triangle_levels}
\end{figure}
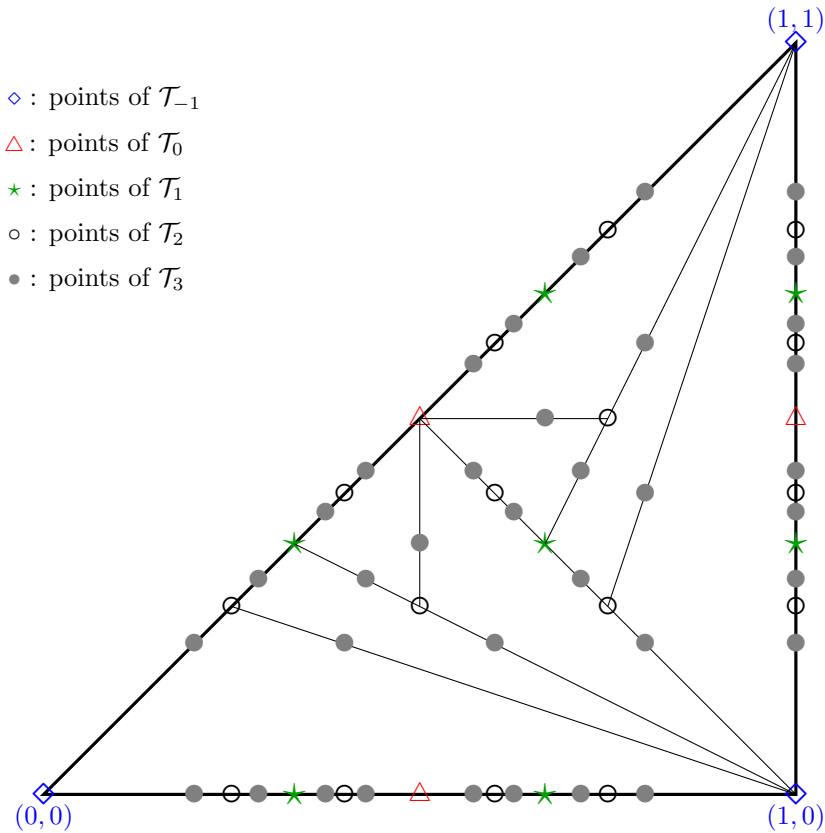

\begin{definition}
    Consider a set
    $\mathfrak{R}=\{\mathfrak{r}_i \,:\, i=1,\,\ldots,\,r\}$ of
    rational points on a line segment, consisting of at least two
    points, and in ascending lexicographic order. The \emph{Farey sum
      of $\mathfrak{R}$} is obtained by adding to $\mathfrak{R}$ the
    mediant between each pair of neighbouring points, that is
    \[
        \mathfrak{R}^\oplus \coloneqq \{\mathfrak{r}_i\oplus
        \mathfrak{r}_{i+1} \,:\, i=1,\,\ldots,\,r-1\} \cup \mathfrak{R}.
    \]
\end{definition}

\noindent To define the levels of the tree in this second, geometric
way, we start from the set
$\mathcal{S}_{-1}\coloneqq \mathcal{T}_{-1}$ of the vertices of
$\triangle$ and then we will define a sequence
$(\mathcal{S}_n)_{n\geq -1}$ of sets such that $\mathcal{S}_{-1}$ are
the three vertices of $\triangle$ and
$\mathcal{S}_n\supseteq \mathcal{S}_{n-1}$ for all $n\geq 0$. In
particular, we introduce a sequence $(\mathscr{P}_n)_{n\geq 0}$ of
measurable partitions of $\triangle$, each refining the previous one
and such that the points of $\mathcal{S}_n$ lie on the sides of the
partition $\mathscr{P}_n$. Then the recursive construction is the
following: given the set of points $\mathcal{S}_n$ up to a certain
level $n$, we obtain $\mathcal{S}_{n+1}$ by inserting the mediant
between each pair of neighbouring points along each side of the
triangles of the partition $\mathscr{P}_{n+1}$. More formally, let
$\mathscr{P}_0=\{\trianglecl\}$ and let $v_0=(0,0)$, $v_1=(1,0)$,
$v_2=(1,1)$ be the three vertices of the triangle $\triangle$. We
partition $\trianglecl$ into two subtriangles by the line segment
joining $v_1$ and $v_0\oplus v_2=\triple{1}{1}{2}$. This determines
the partition $\mathscr{P}_1$. Additionally, we label the vertices of
the two subtriangles according to the rule shown in
Figure~\ref{fig:relabel}. We now proceed inductively. Each triangle of
$\mathscr{P}_n$ is partitioned into two subtriangles by the line
segment joining the vertex labelled ``1'' with the mediant of the
vertex ``0'' and the vertex ``2'' and this gives us the next partition
$\mathscr{P}_{n+1}$. Then, for $n\geq -1$,
\[
    \mathcal{S}_{n+1} \coloneqq \bigcup_{\mathfrak{S}\in
      \mathscr{S}_{n+1}} (\mathfrak{S}\cap \mathcal{S}_n)^\oplus,
\]
where $\mathscr{S}_n$, $n\ge 0$, is the set of sides of the triangles
of the partition $\mathscr{P}_{n}$. To better understand this
construction, we recall the conclusions of \cite[Lemma~5.8]{cas} (for
simplicity of notation, we continuously extend the maps $\phi_0$ and
$\phi_1$ defined in \eqref{def-fi0} and \eqref{def-fi1} to
$\trianglecl$): for any finite binary word $\omega \in \{0,1\}^*$ of
length $n$, let $\phi_\omega$ denote the composition
$\phi_{\omega_0}\circ\phi_{\omega_1}\circ\dots\circ
\phi_{\omega_{n-1}}$, then
\begin{enumerate}
  \item the triangles of $\mathscr{P}_n$ are given by all the possible counterimages
    $\triangle_\omega\coloneqq \phi_\omega(\trianglecl)$ with
    $|\omega|=n$;
  \item
    $\mathscr{S}_{n} = \mathscr{S}_0 \cup
    \set{\widebar{\phi_\omega(\ell)} \,:\, |\omega|\leq n-1}$, where
    $\ell$ is the open line segment joining $(1,0)$ and
    $\couple{1}{2}{1}{2}$.
\end{enumerate}

\begin{figure}[h]
    \begin{tikzpicture}[scale=2.75]

    \def\t{2.5}
    \def\a{0.175}
    \def\b{0.07}
    \def\c{0.1}
    \def\f{0.8}
    \draw[thick] (0,0) -- (2/3,1) -- (7/4,-1/4) -- cycle;
    \node[blue] at ($(7/4,-1/4)+(-2.5*1.3*\b,1.5*1.2*\b)$) {\footnotesize $0$};
    \node[blue] at ($(2/3,1)+(0,-\a)$) {\footnotesize $1$};
    \node[blue] at ($(0,0)+(4/5*\a,\b)$) {\footnotesize $2$};

    \draw[thick] ($(\t,0)+(0,0)$) -- ($(\t,0)+(2/3,1)$) -- ($(\t,0)+(7/4,-1/4)$) -- cycle;
    \draw[thin] ($(\t,0)+(2/3,1)$) -- ($(\t,0)+(7/9,-1/9)$);

    \node[blue] at ($(\t,0)+(7/4,-1/4)+(-2.5*1.3*\b,1.5*1.2*\b)$) {\footnotesize $0$};
    \node[blue] at ($(\t,0)+(2/3,1)+(0.5*1.15*\a,-1.2*1.15*\a)$) {\footnotesize $1$};
    \node[blue] at ($(\t,0)+(5/6,-1/6)+(6/25*\a,2*\b)$) {\footnotesize $2$};
    \node[blue] at ($(\t,0)+(2/3,1)+(-1/1.4*1.1*\b,-1.2*1.1*\a)$) {\footnotesize $0$};
    \node[blue] at ($(\t,0)+(0,0)+(4/5*\a,\b)$) {\footnotesize $1$};
    \node[blue] at ($(\t,0)+(5/6,-1/6)+(-4.5/5*\a,2.4*\b)$) {\footnotesize $2$};

\end{tikzpicture}
    \caption{Partition of a triangle of $\mathscr{P}_n$ into two
      subtriangles and relabelling of the
      vertices.}\label{fig:relabel}
\end{figure}
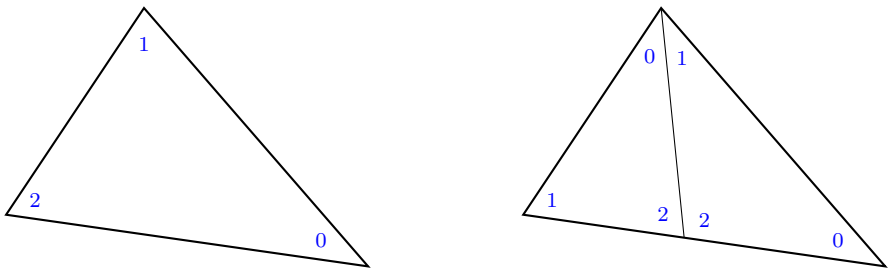

The main properties concerning the triangular tree are contained in
Theorem~5.4 and Theorem~5.8 of \cite{cas}. The first result states
that the tree is complete, that is
\[
    \bigcup_{n\geq -1} \mathcal{T}_n = \Q^2\cap \trianglecl,
\]
with every rational pair appearing exactly once in the tree. The
second result establishes the level-by-level equivalence between the
counterimages tree and the geometric tree defined above, that is
$\mathcal{T}_n = \mathcal{S}_n\setminus \mathcal{S}_{n-1}$ for all
$n\geq 0$.


\section{Triangle sequences: convergence and
  non-convergence} \label{sec:triangle-seq}

It is known that a triangle sequence does not necessarily represent a
unique pair of real numbers, but could correspond to an entire line
segment. If the triangle sequence terminates, we do not have
uniqueness and Lemma~\ref{lemma:key} characterises the points having a
given finite triangle sequence. Uniqueness is not guaranteed even when
the triangle sequence is infinite: \cite{garr} gives a sufficient
condition to have uniqueness and a criterion, equivalent to uniqueness
is proved in the later work \cite{dual-approach}. In this section we
discuss this problem.

We start by introducing some notation. Let $X=\vet{q}{p}{r}$ be a
three-dimensional vector with integer components and $q\neq 0$. We
then define the correspondent rational pair
\[
    \hat X \coloneqq \triple{p}{r}{q}
\]
for which both components have the same denominator. For instance, the
vertices $v_0$, $v_1$ and $v_2$ of $\triangle$ are represented by
$\vet{1}{0}{0}$, $\vet{1}{1}{0}$, and $\vet{1}{1}{1}$,
respectively. Note that the sum of two three-dimensional vectors
corresponds to the mediant between the two correspondent
two-dimensional vectors, that is
\[
    \widehat{X+Y} = \hat X \oplus \hat Y.
\]
For a sequence $(\alpha_0,\alpha_1,\ldots)$ of non-negative
integers and for an integer $k\geq 0$ we define
\[
    \triangle(\alpha_0,\ldots,\alpha_k) \coloneqq \set{(x,y)\in
      \triangle \,:\, T^j(x,y)\in \triangle_{\alpha_j}\text{ for all }
      j=0,\,\ldots,\,k}.
\]
The set $\triangle(\alpha_0,\ldots,\alpha_k)$ is a triangle and
consists of all those points whose first $k+1$ triangle sequence
digits are precisely $\alpha_0,\ldots,\alpha_k$ and thus these
triangles are nested, that is
\[
    \triangle \supset \triangle(\alpha_0) \supset
    \triangle(\alpha_0,\alpha_1)\supset \cdots.
\]
Let $(X_k)_{k\geq -3}$ be the
sequence of three-dimensional vectors defined as follows:
\begin{equation}\label{eq:X_k}
    X_{-3}=\Vet{0}{0}{1},\ X_{-2}=\Vet{1}{0}{0},\
    X_{-1}=\Vet{1}{1}{0},\quad X_k =
    X_{k-3}+X_{k-1}+\alpha_kX_{k-2}\text{ for all $k\geq 0$},
\end{equation}
then the vertices of $\triangle(\alpha_0,\ldots,\alpha_k)$ are
$\hat X_{k-1}$, $\hat X_k$ and $\hat X_{k-2}\oplus \hat X_k$
(see \cite[Theorem~3]{dual-approach}). Figure~\ref{fig:nested-tr}
shows the recursive construction of the triangles
$\triangle(\alpha_0,\ldots,\alpha_k)$.

When the triangle sequence
is infinite, the infinite intersection
$\bigcap_{k\geq 0} \triangle(\alpha_0,\ldots,\alpha_k)$ can be
either a point or a line segment.
\begin{enumerate}[label={\upshape(\arabic*)},wide=0pt,labelsep=0.2cm,leftmargin=*]
  \item In the first case the nested triangles shrink to a point,
    which means that the triangle sequence $(\alpha_k)_{k\geq 0}$
    denotes a unique pair of real numbers $(\alpha,\beta)$. As a
    consequence
    $\diam \triangle(\alpha_0,\ldots,\alpha_k)\rightarrow 0$ and the
    sequence $(\hat X_k)_{k\geq -3}$ converges to $(\alpha,\beta)$. We
    will refer to this case as the \emph{convergent case}.
  \item\label{point2} In the second case the triangle sequence does
    not uniquely describe a point but instead identifies a line
    segment $\mathfrak{L}$ of length $l>0$, such that all the points
    of $\mathfrak{L}$ have the same triangle sequence
    $[\alpha_0,\alpha_1,\ldots]$. In this case
    $\diam \triangle(\alpha_0,\ldots,\alpha_k)\rightarrow l$ and the
    sequence $(\hat X_k)_{k\geq -3}$ does not admit a limit. More
    precisely, we have that the odd and even terms of
    $(\hat X_k)_{k\geq -3}$ converge to the two endpoints of
    $\mathfrak{L}$ \cite[Theorem~6]{dual-approach}. In particular,
    $d(\hat X_{k-1},\hat X_k)\rightarrow l$ and it also holds that
    $d(\hat X_k,\hat X_{k-1}\oplus\hat X_k)\rightarrow 0$, where
    $d(\cdot,\cdot)$ is the Euclidean distance in $\R^2$. We will
    refer to this case as the \emph{non-convergent case}.
\end{enumerate}
The main result of \cite[Section~6]{dual-approach} is a criterion of
uniqueness, which we now state. Let
\[
    \lambda_k\coloneqq \frac{d(\hat X_{k-1},\hat X_{k+1})}{d(\hat
      X_{k-1},\hat X_k\oplus \hat X_{k-2})},
\]
and refer again to Figure~\ref{fig:nested-tr} for the geometric
interpretation of this quantity. The triangle sequence
$[\alpha_0,\alpha_1,\ldots]$ does not correspond to a unique pair
of real numbers if and only if it contains only a finite number of zeroes
and $\prod_{k\geq N}(1-\lambda_k)>0$, where $N$ is such that
$\alpha_k>0$ for all $k\geq N$\footnote{Note that $\lambda_k=1$ if and
  only if $\alpha_k=0$.}. The convergence of the infinite product to a
non-zero number is equivalent to the convergence of
$\sum_{k\geq 0} \lambda_k$, which is in turn equivalent to
$\lambda_k\rightarrow 0$ sufficiently fast: the geometric meaning of
$\lambda_k$ suggests that this condition is equivalent to a
sufficiently fast growth of the triangle sequence digits $\alpha_k$.

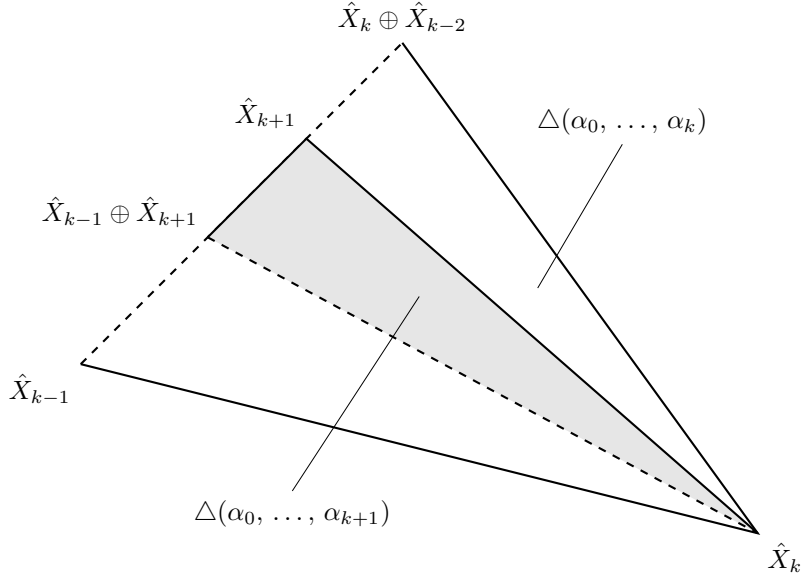
\begin{figure}[h]
    \begin{tikzpicture}[scale=2.25]

    \draw[thick] (0,0) node[below left]{$\hat X_{k-1}$}  -- (4,-1) node[below right]{$\hat X_k$} -- (1.9,1.9) node[above]{$\hat X_k\oplus \hat X_{k-2}$};

    \draw[thick,dashed] (0,0) -- (1.9,1.9);

    \draw[thick,fill=gray!20] (4,-1) -- (4/3,4/3) node[above left] {$\hat X_{k+1}$} -- (3/4,3/4) node[above left] {$\hat X_{k-1} \oplus \hat X_{k+1}$};

    \draw[thick,dashed] (3/4,3/4) -- (4,-1);

    \draw[very thin] (3.2,1.3)--(2.7,0.45);
    \node[above] at (3.2,1.3) {$\triangle(\alpha_0,\,\ldots,\,\alpha_k)$};

	\draw[very thin] (1.25,-0.75)--(2,0.4);
	\node[below] at (1.25,-0.75) {$\triangle(\alpha_0,\,\ldots,\,\alpha_{k+1})$};

\end{tikzpicture}
    \caption{Construction of the triangle
      $\triangle(\alpha_0,\,\ldots,\,\alpha_k,\,\alpha_{k+1})$
      starting from
      $\triangle(\alpha_0,\,\ldots,\,\alpha_k)$·}\label{fig:nested-tr}
\end{figure}

We now prove some results of convergence for the points of the closed
triangle $\trianglecl$ in the non-convergent case. For two rational
pairs $\hat X$ and $\hat Y$ and a non-negative integer $s$ we introduce
the notation
\[
    \hat Y \oplus_s \hat X \coloneqq \hat Y \oplus \underbrace{\hat X
      \oplus \cdots \oplus \hat X}_{s\text{ times}},
\]
and recall that the maps $\phi_0$ and $\phi_1$ commute with the
mediant operation for rational pairs whose components have the same
denominator, that is
\[
    \phi_i\left( \hat X \oplus \hat Y\right) = \phi_i(\hat X) \oplus
    \phi_i(\hat Y)\quad\text{for $i=0,1$}.
\]
Also, in the following we denote by $\phi_0$ and $\phi_1$ the
continuous extension to $\trianglecl$ of the maps defined in
\eqref{def-fi0} and \eqref{def-fi1}.

\begin{lemma}\label{lemma:delta_0k}
    Let $k\geq 0$ and let $(\alpha_j)_{j=0,\,\ldots,\,k}$ be a
    sequence of non-negative integers. It holds that
    \[
        \triangle(\alpha_0,\ldots,\alpha_k) =
        \phi_1^{\alpha_0}\phi_0\cdots
        \phi_1^{\alpha_k}\phi_0(\trianglecl\setminus \Sigma),
    \]
    and more precisely
    $\hat X_k = \phi_1^{\alpha_0}\phi_0\cdots
    \phi_1^{\alpha_k}\phi_0(1,0)$.
\end{lemma}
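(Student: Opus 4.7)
The plan is to prove both assertions by induction on $k$, exploiting the jump-time relation \eqref{eq:T-jumpS}.

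For the set identity, relation \eqref{eq:T-jumpS} gives that on $\triangle_{\alpha_0}$ the Triangle Map equals $S|_{\Gamma_0}\circ S|_{\Gamma_1}^{\alpha_0}$, and hence its local inverse branch there is $\phi_1^{\alpha_0}\phi_0$, with natural domain $\trianglecl\setminus\Sigma$. Since $T$ acts as the left shift on triangle sequences, $(x,y)\in\triangle(\alpha_0,\ldots,\alpha_k)$ iff $(x,y)\in\triangle_{\alpha_0}$ and $T(x,y)\in\triangle(\alpha_1,\ldots,\alpha_k)$, which gives the recursive identity $\triangle(\alpha_0,\ldots,\alpha_k)=\phi_1^{\alpha_0}\phi_0\bigl(\triangle(\alpha_1,\ldots,\alpha_k)\bigr)$. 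The base case $\triangle(\alpha_0)=\phi_1^{\alpha_0}\phi_0(\trianglecl\setminus\Sigma)=\triangle_{\alpha_0}$ is immediate from the bijectivity of $\phi_0:\trianglecl\setminus\Sigma\to\triangle_0$ and from $\phi_1^{\alpha_0}(\triangle_0)=\triangle_{\alpha_0}$, the latter being a consequence of the fact (recalled in Section \ref{sec:trianglemaps}) that $S$ maps $\triangle_j$ onto $\triangle_{j-1}$ for every $j\ge 1$.

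For the vertex formula, I would pass to the three-dimensional representation $X=\vet{q}{p}{r}\mapsto\hat X$. A direct computation from \eqref{def-fi0} and \eqref{def-fi1} shows that $\phi_0(\hat X)=\widehat{A_0 X}$ and $\phi_1(\hat X)=\widehat{A_1 X}$, with
\[
A_0=\begin{pmatrix}1&0&1\\1&0&0\\0&1&0\end{pmatrix},\qquad A_1=\begin{pmatrix}1&0&1\\0&1&0\\0&0&1\end{pmatrix}.
\]
Setting $M_k\coloneqq A_1^{\alpha_0}A_0\cdots A_1^{\alpha_k}A_0$, the composition in the statement acts on integer vectors exactly as $M_k$, and since $(1,0)=\hat X_{-1}$ with $X_{-1}=\vet{1}{1}{0}$, the claim reduces to $M_k X_{-1}=X_k$.

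I would then prove by induction on $k$ the stronger statement
\[
M_k X_{-1}=X_k,\qquad M_k X_{-2}=X_{k-1},\qquad M_k X_{-3}=X_{k-2},
\]
which describes the evolution under $M_k$ of the three vectors generating the nested triangles. Writing $M_k=M_{k-1}\cdot A_1^{\alpha_k}A_0$, one verifies directly (using $A_1 X_{-1}=X_{-1}$ and $A_1 X_{-2}=X_{-2}$, both immediate from the shape of $A_1$) the three elementary identities
\[
A_1^{\alpha_k}A_0\,X_{-1}=X_{-1}+\alpha_k X_{-2}+X_{-3},\quad A_1^{\alpha_k}A_0\,X_{-2}=X_{-1},\quad A_1^{\alpha_k}A_0\,X_{-3}=X_{-2}.
\]
Applying $M_{k-1}$ to both sides and invoking the inductive hypothesis, the first identity yields $M_k X_{-1}=X_{k-1}+\alpha_k X_{k-2}+X_{k-3}=X_k$ by the defining recursion \eqref{eq:X_k}, and the other two follow in the same way. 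The only delicate point, appearing in the first assertion, is to keep track of the restriction away from $\Sigma$ along the chain of compositions; this is not a real obstacle, since each image $\triangle_{\alpha_j}$ lies in the natural domain of the next $\phi_0$, but it is the one place where the otherwise purely algebraic argument has a geometric flavour.
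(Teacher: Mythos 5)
Your proof is correct, but the second half takes a genuinely different route from the paper. For the set identity you do essentially what the paper does: both arguments reduce, via the jump relation \eqref{eq:T-jumpS}, to $\triangle_d=\phi_1^{d}\phi_0(\trianglecl\setminus\Sigma)$ plus the shift property of $T$ (the paper writes $T^k$ out as a composition of the branches of $S$ and pulls back $\triangle_{\alpha_k}$, which is your recursion unrolled). For the vertex formula, however, the paper argues geometrically: by strong induction it identifies $\hat X_{k-1}$, $\hat X_{k-2}$ and $\hat X_{k-3}\oplus\hat X_{k-1}$ as the images of $(1,0)$, $(0,0)$ and $(1,1)$ under $\phi_1^{\alpha_0}\phi_0\cdots\phi_1^{\alpha_{k-1}}\phi_0$, and then uses the compatibility of $\phi_0,\phi_1$ with the mediant to produce $\hat X_k$ as $(\hat X_{k-3}\oplus\hat X_{k-1})\oplus_{\alpha_k}\hat X_{k-2}$. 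You instead linearise everything through the matrices $A_0,A_1$ (which are exactly the matrices $M_0,M_1$ of \eqref{matrici-01}, introduced in the paper only in Section~\ref{sec:speed}) and prove the stronger triple $M_kX_{-1}=X_k$, $M_kX_{-2}=X_{k-1}$, $M_kX_{-3}=X_{k-2}$ by a one-line induction from the identities $A_1^{\alpha}A_0X_{-1}=X_{-1}+\alpha X_{-2}+X_{-3}$, $A_1^{\alpha}A_0X_{-2}=X_{-1}$, $A_1^{\alpha}A_0X_{-3}=X_{-2}$, which I have checked. Your approach buys a purely algebraic argument that also delivers, as byproducts, the formulas for the other two generators (in particular $\phi_1^{\alpha_0}\phi_0\cdots\phi_1^{\alpha_k}\phi_0(0,0)=\hat X_{k-1}$, which the paper in effect re-derives inside its induction), and it makes the later matrix machinery of Section~\ref{sec:speed} appear naturally; the paper's mediant argument buys the geometric picture of the nested triangles and stays closer to the Farey-tree analogy used throughout. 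Two small points of care: the vector identities involving $X_{-3}$ must be used only at the level of vectors (since $\hat X_{-3}$ is undefined, its first coordinate being $0$), which you implicitly do, and one should note that the first coordinate stays positive along the chain so that the correspondence $\phi_i(\hat X)=\widehat{A_iX}$ applies at every step. Finally, your closing justification of the domain issue is slightly off: $\triangle_{\alpha_j}$ does meet $\Sigma$ (e.g.\ its diagonal points), so it is \emph{not} contained in the natural domain $\trianglecl\setminus\Sigma$ of $\phi_0$; the correct fix is the one the paper adopts just before the lemma, namely working with the continuous extensions of $\phi_0,\phi_1$ to $\trianglecl$, under which both your argument and the paper's go through unchanged.
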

\begin{proof}
    Let $d$ be a non-negative integer. We have
    $\triangle_d = \phi_1^d\phi_0(\trianglecl\setminus \Sigma)$,
    which can be verified by explicitly computing the vertices of the
    two triangles. This in turn proves the result in the case $k=0$.
    \\[0.15cm]
    For $k\geq 1$, by definition we have
    $(x,y)\in \triangle(\alpha_0,\ldots,\alpha_k)$ if and only if
    \[
        T^j(x,y)\in \triangle_{\alpha_j}\quad\text{for all
        }j=0,\,\ldots,\,k.
    \]
    We can thus write
    \[
        T^k(x,y)=T|_{\triangle_{\alpha_{k-1}}}\circ \cdots \circ
        T|_{\triangle_{\alpha_0}}(x,y) \overset{\eqref{eq:T-jumpS}}{=}
        S|_{\Gamma_0} S|_{\Gamma_1}^{\alpha_{k-1}}\circ\cdots\circ
        S|_{\Gamma_0}\circ S|_{\Gamma_1}^{\alpha_0}(x,y)\in
        \triangle_{\alpha_k}
    \]
    which holds if and only if
    $(x,y)\in \phi_1^{\alpha_0}\phi_0\cdots
    \phi_1^{\alpha_{k-1}}\phi_0(\triangle_{\alpha_k})$. Since
    $\triangle_{\alpha_k}=\phi_1^{\alpha_k}\phi_0(\trianglecl\setminus
    \Sigma)$, the first part of the result follows.\\[0.15cm]
    For the second part of the lemma we argue by strong induction on
    $k\geq 0$. The case $k=0$ follows from the first part of this
    proof, by the explicit computation of the vertices of
    $\triangle(\alpha_0)$. Now let $k\geq 1$ and consider the triangle
    $\triangle(\alpha_0,\ldots,\alpha_{k-1})=
    \phi_1^{\alpha_0}\phi_0\cdots
    \phi_1^{\alpha_{k-1}}\phi_0(\trianglecl\setminus \Sigma)$.  By
    inductive hypothesis two of its vertices are
    $\hat X_{k-1}=\phi_1^{\alpha_0}\phi_0\cdots
    \phi_1^{\alpha_{k-1}}\phi_0(1,0)$ and
    $\hat X_{k-2} =\phi_1^{\alpha_0}\phi_0\cdots
    \phi_1^{\alpha_{k-2}}\phi_0(1,0)$. Since $\phi_0(0,0)=(1,0)$ and
    $\phi_1(1,0)=(1,0)$, we can also write
    $\hat X_{k-2} = \phi_1^{\alpha_0}\phi_0\cdots
    \phi_1^{\alpha_{k-1}}\phi_0(0,0)$. Thus, from
    $\triangle(\alpha_0,\ldots,\alpha_{k-1}) =
    \phi_1^{\alpha_0}\phi_0\cdots
    \phi_1^{\alpha_{k-1}}\phi_0(\trianglecl\setminus \Sigma)$, it
    follows that the other vertex of
    $\triangle(\alpha_0,\ldots,\alpha_{k-1})$ has to be the backward
    image of $(1,1)$, that is
    $\hat X_{k-3}\oplus \hat X_{k-1}= \phi_1^{\alpha_0}\phi_0\cdots
    \phi_1^{\alpha_{k-1}}\phi_0(1,1)$. Since the maps $\phi_0$ and
    $\phi_1$ commute with the mediant, we have
    \begin{align*}
        \hat X_k &= (\hat X_{k-3}\oplus \hat X_{k-1})\oplus_{\alpha_k}
        \hat X_{k-2}= \phi_1^{\alpha_0}\phi_0\cdots
        \phi_1^{\alpha_{k-1}}\phi_0(1,1)\oplus_{\alpha_k}
        \phi_1^{\alpha_0}\phi_0\cdots \phi_1^{\alpha_{k-1}}\phi_0(0,0)
        =\\
        &= \phi_1^{\alpha_0}\phi_0\cdots
        \phi_1^{\alpha_{k-1}}\phi_0\left(\frac 1{\alpha_k+1},\frac
          1{\alpha_k+1}\right)= \phi_1^{\alpha_0}\phi_0\cdots
        \phi_1^{\alpha_{k-1}}\phi_0\phi_1^{\alpha_k}\phi_0(1,0),
    \end{align*}
    and this completes the proof.
\end{proof}

\begin{lemma}\label{lemma:s-farey-sum}
    Let $\hat X = \triple{p}{r}{q}$ and $\hat Y = \triple{p'}{r'}{q'}$,
    and let $s$ be a non-negative integer. Then
    \[
        d(\hat Y\oplus_s \hat X,\hat X) = d(\hat X,\hat
        Y)\frac{q'}{q'+sq} \quad\text{and}\quad d(\hat Y\oplus_s \hat
        X,\hat Y) = d(\hat X,\hat Y)\frac{sq}{q'+sq}.
    \]
\end{lemma}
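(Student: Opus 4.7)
The plan is to reduce the statement to a single convex-combination identity. First I would unwind the iterated mediant in the three-dimensional vector formalism of \eqref{eq:X_k}: reading $\hat Y \oplus_s \hat X$ as $\widehat{Y+sX}$ (the only interpretation consistent with $s\ge 2$, since reducing $\hat X \oplus \hat X$ would collapse to $\hat X$), we obtain the compact formula
\[
\hat Y \oplus_s \hat X = \triple{p'+sp}{r'+sr}{q'+sq}.
\]

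Second, I would verify by a componentwise check that this point is the convex combination
\[
\hat Y \oplus_s \hat X = \frac{sq}{q'+sq}\,\hat X + \frac{q'}{q'+sq}\,\hat Y,
\]
using e.g.\ $\frac{sq}{q'+sq}\cdot\frac{p}{q}+\frac{q'}{q'+sq}\cdot\frac{p'}{q'}=\frac{sp+p'}{q'+sq}$, and likewise for the second coordinate with $r,r'$. Since the two coefficients are nonnegative and sum to $1$, the point $\hat Y\oplus_s\hat X$ lies on the closed segment joining $\hat X$ and $\hat Y$, and the weight attached to $\hat X$ (resp.\ $\hat Y$) measures the fractional distance from $\hat Y$ (resp.\ $\hat X$).

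Third, I would invoke the elementary fact that if $Z=\alpha X+(1-\alpha)Y$ with $\alpha\in[0,1]$, then $Z-Y=\alpha(X-Y)$ and $Z-X=(1-\alpha)(Y-X)$, so $d(Z,Y)=\alpha\,d(X,Y)$ and $d(Z,X)=(1-\alpha)\,d(X,Y)$. Applying this with $\alpha=\frac{sq}{q'+sq}$ yields the two claimed identities at once. There is essentially no obstacle beyond fixing the correct bookkeeping of $\oplus_s$ as addition of the three-dimensional lifts before reducing; once that convention is in place, the lemma follows from the convex-combination observation in a line.
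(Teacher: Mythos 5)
Your proof is correct and follows essentially the same route as the paper, which simply notes that the claim is a straightforward computation based on the identity $\hat Y\oplus_s\hat X = \triple{p'+sp}{r'+sr}{q'+sq}$ — exactly the formula you derive. Your convex-combination bookkeeping, with weights $\frac{sq}{q'+sq}$ and $\frac{q'}{q'+sq}$, is just a clean way of organizing that same computation, and your reading of $\oplus_s$ (unreduced iterated mediant, i.e.\ the vector $Y+sX$) matches the paper's convention.
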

\begin{proof}
    It is a straightforward computation involving $\hat X$, $\hat Y$
    and $\hat Y\oplus_s\hat X = \triple{p'+sp}{r'+sr}{q'+sq}$.
\end{proof}

For $k\geq -3$ we introduce the notation
\[
    X_k = \Vet{q_k}{p_k}{r_k},
\]
so that $\hat X_k=\triple{p_k}{r_k}{q_k}$. Note that \eqref{eq:X_k}
implies that the denominators satisfy the recurrence
\[
    q_k=q_{k-3}+q_{k-1}+\alpha_kq_{k-2},
\]
where $q_{-3}=0$, $q_{-2}=1$ and $q_{-1}=1$. This remark and
Lemma~\ref{lemma:s-farey-sum} imply that for all $k\geq 0$ we have
\[
    \lambda_k = \frac{d(\hat X_{k-1},\hat X_{k+1})}{d(\hat
      X_{k-1},\hat X_k\oplus \hat X_{k-2})} =
    \frac{q_{k-2}+q_k}{q_{k-2}+q_k+\alpha_{k+1}q_{k-1}} =
    \frac{q_{k-2}+q_k}{q_{k+1}}.
\]
It also easily follows that
\begin{equation}\label{eq:1-lambda}
    1-\lambda_k =\alpha_{k+1}\frac{q_{k-1}}{q_{k+1}}.
\end{equation}

\begin{lemma}\label{lemma:limit-dist}
    Let $(\alpha_k)_{k\geq 0}$ be a non-convergent triangle sequence
    describing a line segment of length $l>0$ (as described in point
    \ref{point2} above).
    \begin{enumerate}
      \item The ratio of consecutive denominators of the $X_k$ diverges, that is
        $\lim_{k\rightarrow +\infty} \frac{q_k}{q_{k-1}} =+\infty$.
      \item It holds
        \[
            \lim_{k\rightarrow+\infty}d(\hat X_{k-2}\oplus \hat
            X_k,\hat X_{k-1})=l.
        \]
      \item For any non-negative integer $s$, it holds
        \[
            \lim_{k\rightarrow +\infty} d(\hat X_k \oplus_s \hat
            X_{k-1},\hat X_k) = 0 \quad\text{and}\quad
            \lim_{k\rightarrow +\infty} d((\hat X_{k-2}\oplus \hat
            X_k)\oplus_s \hat X_{k-1},\hat X_{k-2}\oplus \hat X_k)=0.
        \]
    \end{enumerate}
\end{lemma}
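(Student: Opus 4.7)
The plan is to chain the three statements, deriving (i) from the non-convergence hypothesis, (ii) from (i) plus a triangle inequality, and (iii) from (i) and (ii) via direct applications of Lemma~\ref{lemma:s-farey-sum}. All the distance bookkeeping reduces to denominator comparisons.

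For (i), I would exploit the non-convergence criterion recalled above: $[\alpha_0,\alpha_1,\ldots]$ being non-convergent is equivalent to $\prod_{k\ge N}(1-\lambda_k)>0$, which in turn forces $\sum_k \lambda_k<\infty$ and in particular $\lambda_k\to 0$. Using the explicit formula $\lambda_{k-1}=(q_{k-3}+q_{k-1})/q_k\ge q_{k-1}/q_k$ derived just before the lemma, this immediately yields $q_{k-1}/q_k\to 0$, i.e.\ $q_k/q_{k-1}\to +\infty$.

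For (ii), the point $\hat X_{k-2}\oplus \hat X_k$ lies on the segment from $\hat X_{k-2}$ to $\hat X_k$, so Lemma~\ref{lemma:s-farey-sum} with $s=1$, $\hat Y=\hat X_{k-2}$, $\hat X=\hat X_k$ gives
\[
d(\hat X_{k-2}\oplus \hat X_k,\hat X_k)=d(\hat X_{k-2},\hat X_k)\cdot \frac{q_{k-2}}{q_{k-2}+q_k}.
\]
The recurrence $q_k=q_{k-3}+q_{k-1}+\alpha_k q_{k-2}$ makes the sequence $(q_k)$ non-decreasing, so $q_{k-2}/q_{k-1}\le 1$; combining with (i) gives $q_{k-2}/q_k\to 0$, and since the first factor is bounded by $\diam\trianglecl$, the distance tends to zero. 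A triangle inequality against $d(\hat X_{k-1},\hat X_k)\to l$ then yields $d(\hat X_{k-2}\oplus \hat X_k,\hat X_{k-1})\to l$.

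For (iii), a direct application of Lemma~\ref{lemma:s-farey-sum} handles both limits. For the first, with $\hat Y=\hat X_k$ (denominator $q_k$) and $\hat X=\hat X_{k-1}$ (denominator $q_{k-1}$), one writes
\[
d(\hat X_k\oplus_s \hat X_{k-1},\hat X_k)=d(\hat X_{k-1},\hat X_k)\cdot \frac{sq_{k-1}}{q_k+sq_{k-1}},
\]
and the ratio vanishes by (i) while the first factor stays bounded. For the second, take $\hat Y=\hat X_{k-2}\oplus \hat X_k$ (denominator $q_{k-2}+q_k$) and $\hat X=\hat X_{k-1}$ to get
\[
d((\hat X_{k-2}\oplus \hat X_k)\oplus_s \hat X_{k-1},\hat X_{k-2}\oplus \hat X_k)=d(\hat X_{k-1},\hat X_{k-2}\oplus \hat X_k)\cdot \frac{sq_{k-1}}{q_{k-2}+q_k+sq_{k-1}},
\]
where the first factor is controlled by (ii) and the ratio again vanishes by (i), since dividing numerator and denominator by $q_k$ leaves a quotient that tends to $0$. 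The only genuinely non-routine step is (i), and even that is a short argument from a standard fact about infinite products; I do not expect significant obstacles beyond keeping track of which denominator plays the role of $q'$ in each use of Lemma~\ref{lemma:s-farey-sum}.
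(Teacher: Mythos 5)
Your proof is correct and follows essentially the same route as the paper: (i) from $\lambda_k\to 0$ (forced by the positivity of the infinite product) together with the explicit formula for $\lambda_k$ in terms of the $q_j$, (ii) by squeezing against $d(\hat X_{k-1},\hat X_k)\to l$, and (iii) by the same two applications of Lemma~\ref{lemma:s-farey-sum} combined with (i) and (ii). The only cosmetic difference is in (ii), where you rederive $d(\hat X_{k-2}\oplus\hat X_k,\hat X_k)\to 0$ from Lemma~\ref{lemma:s-farey-sum}, the monotonicity of $(q_k)$ and part (i), and then use the reverse triangle inequality, whereas the paper instead quotes this convergence and the lower bound $d(\hat X_{k-1},\hat X_k)\le d(\hat X_{k-2}\oplus\hat X_k,\hat X_{k-1})$ from the cited work on triangle sequences; your variant is slightly more self-contained but not a different argument.
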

\begin{proof}
    (i) From the recurrence for the denominators we have
    \[
        \frac{q_k}{q_{k-1}} \geq 1+\alpha_k \frac{q_{k-2}}{q_{k-1}}
        \overset{\textrm{\eqref{eq:1-lambda}}}{=}
        1+(1-\lambda_{k-1})\frac{q_k}{q_{k-1}},
    \]
    so that $\frac{q_k}{q_{k-1}} \geq \frac{1}{\lambda_k}$. Since
    $\lambda_k\rightarrow 0^+$ as $k\rightarrow +\infty$, the thesis
    follows.\\[0.15cm]
    (ii) For $k$ large enough we have
    \[
        d(\hat X_{k-1},\hat X_k)\leq d(\hat X_{k-2}\oplus \hat
        X_k,\hat X_{k-1})\leq d(\hat X_{k-2}\oplus \hat X_k,\hat
        X_{k})+ d(\hat X_{k-1},\hat
        X_k),
    \]
    where the first inequality is shown in
    \cite[Theorem~6]{dual-approach} and the second inequality is the
    triangle inequality applied to
    $\triangle(\alpha_0,\,\ldots,\,\alpha_k)$. This is enough to
    conclude because we already know that
    $d(\hat X_{k-1},\hat X_k)\rightarrow l$ and
    $d(\hat X_{k-2}\oplus \hat X_k,\hat X_{k})\rightarrow 0$ (see point \ref{point2} above).\\[0.15cm]
    (iii) From Lemma~\ref{lemma:s-farey-sum} we have
    \[
        d(\hat X_k \oplus_s \hat X_{k-1},\hat X_k) = d(\hat X_k,\hat
        X_{k-1})\frac{s}{s+\frac{q_k}{q_{k-1}}}
    \]
    and
    \[
        d((\hat X_{k-2}\oplus \hat X_k)\oplus_s \hat X_{k-1},\hat
        X_{k-2}\oplus \hat X_k) =d(\hat X_{k-2}\oplus \hat X_k,\hat
        X_{k-1})\frac{s}{s+\frac{q_k}{q_{k-1}} +
          \frac{q_{k-2}}{q_{k-1}}}.
    \]
    Using $d(\hat X_{k-1},\hat X_k)\rightarrow l$, (i), and (ii), the
    result easily follows.
\end{proof}

\begin{proposition}\label{prop:nonconvergent}
    Let $(\alpha_k)_{k\geq 0}$ be a non-convergent triangle sequence
    and denote by $\mathfrak{p}_{\mathfrak L}$ and
    $\mathfrak{q}_{\mathfrak{L}}$ the two endpoints of the line
    segment
    $\mathfrak{L} = \bigcap_{k\geq 0}
    \triangle(\alpha_0,\,\ldots,\,\alpha_k)$, such that
    $(\hat X_{2k})_{k\geq 0}$ and $(\hat X_{2k+1})_{k\geq 0}$ converge
    respectively to $\mathfrak{p}_{\mathfrak L}$ and
    $\mathfrak{q}_{\mathfrak{L}}$. Then for all
  $(x,y)\in \trianglecl\setminus\set{(0,0)}$ it holds
    \[
        \lim_{k\rightarrow+\infty} \phi_1^{\alpha_0}\phi_0\cdots
        \phi_1^{\alpha_{2k}}\phi_0(x,y) = \mathfrak{p}_{\mathfrak{L}},\quad
         \lim_{k\rightarrow+\infty} \phi_1^{\alpha_0}\phi_0\cdots
        \phi_1^{\alpha_{2k+1}}\phi_0(x,y) = \mathfrak{q}_{\mathfrak{L}},
            \]
    and
    \[
       \lim_{k\rightarrow+\infty} \phi_1^{\alpha_0}\phi_0\cdots
        \phi_1^{\alpha_{2k}}\phi_0(0,0) = \mathfrak{q}_{\mathfrak{L}},\quad
        \lim_{k\rightarrow+\infty} \phi_1^{\alpha_0}\phi_0\cdots
        \phi_1^{\alpha_{2k+1}}\phi_0(0,0) = \mathfrak{p}_{\mathfrak{L}}.
    \]
\end{proposition}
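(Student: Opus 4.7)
The plan is to lift the iterated map $M_k \coloneqq \phi_1^{\alpha_0}\phi_0 \cdots \phi_1^{\alpha_k}\phi_0$ to homogeneous three-dimensional coordinates, where it acts linearly, and then to recover its action on $\trianglecl$ as an explicit convex combination of $\hat X_{k-1}$, $\hat X_k$ and $\hat X_{k-2}$. The asymptotic behaviour of the resulting weights will then be read off from Lemma~\ref{lemma:limit-dist}(i).

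First, I would identify each $(x,y) = (p/q, r/q) \in \trianglecl$ with the vector $(q, p, r)^T$, where $q \geq p \geq r \geq 0$ and $q > 0$. A direct computation shows that $\phi_0$ and $\phi_1$ act \emph{linearly} on these homogeneous coordinates, via $(q, p, r)^T \mapsto (q+r, q, p)^T$ and $(q, p, r)^T \mapsto (q+r, p, r)^T$ respectively, so $M_k$ is represented by a $3 \times 3$ integer matrix $\mathcal{M}_k$. From Lemma~\ref{lemma:delta_0k} and the identities $M_k(0,0)=\hat X_{k-1}$, $M_k(1,1)=\hat X_{k-2}\oplus \hat X_k$ already derived in its proof, the columns of $\mathcal{M}_k$ are $X_{k-1}$, $X_k-X_{k-1}$, $X_{k-2}$. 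Expanding $(q,p,r)^T$ in the basis $(1,0,0)^T, (1,1,0)^T, (1,1,1)^T$ with the nonnegative coefficients $(q-p), (p-r), r$ and applying linearity gives
\[
    \mathcal{M}_k (q, p, r)^T = (q-p)\, X_{k-1} + p\, X_k + r\, X_{k-2}.
\]
Dividing by the first component $D_k \coloneqq (q-p)q_{k-1} + pq_k + r q_{k-2}$ then yields the explicit convex combination
\[
    M_k(x,y) = \frac{(q-p) q_{k-1}}{D_k}\, \hat X_{k-1} + \frac{p\, q_k}{D_k}\, \hat X_k + \frac{r\, q_{k-2}}{D_k}\, \hat X_{k-2}.
\]

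Next, observe that inside $\trianglecl$ the inequality $r \leq p$ is automatic, so $(x,y) \neq (0,0)$ is equivalent to $p > 0$. When $p > 0$, the bound $D_k \geq p q_k$ together with Lemma~\ref{lemma:limit-dist}(i), which gives $q_{k-1}/q_k \to 0$ and hence $q_{k-2}/q_k = (q_{k-2}/q_{k-1})(q_{k-1}/q_k) \to 0$, forces the weights of $\hat X_{k-1}$ and $\hat X_{k-2}$ to vanish while that of $\hat X_k$ tends to $1$. Hence $M_k(x,y)$ inherits the limit of $\hat X_k$, namely $\mathfrak{p}_{\mathfrak{L}}$ along even indices and $\mathfrak{q}_{\mathfrak{L}}$ along odd indices, as required. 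The remaining case $(x,y)=(0,0)$ gives $p = r = 0$ and the displayed formula collapses to $M_k(0,0) = \hat X_{k-1}$, from which the swapped limits follow immediately.

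The main delicate point is establishing the convex-combination identity. This requires recognising that $\phi_0$ and $\phi_1$ act linearly on homogeneous coordinates (and extending this linearity to general $(q,p,r)$ with nonnegative entries) and reading off the columns of $\mathcal{M}_k$ from the vertex images already computed in the proof of Lemma~\ref{lemma:delta_0k}. Once the formula is in hand, the remainder reduces to an entirely routine asymptotic estimate on nonnegative weights, supplied by Lemma~\ref{lemma:limit-dist}(i).
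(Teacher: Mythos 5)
Your proof is correct, and it takes a genuinely different route from the paper's. The paper argues case by case and geometrically: it handles $(0,0)$ via the index shift coming from $(1,0)=\phi_1\phi_0(0,0)$; it treats points of $\Lambda$, $\Upsilon$ and $\Sigma$ by squeezing $\phi_1^{\alpha_0}\phi_0\cdots\phi_1^{\alpha_{2k}}\phi_0(\xi,0)$ between $\hat X_{2k}$ and $\hat X_{2k}\oplus_{s-1}\hat X_{2k-1}$, using the monotonicity of $\phi_0,\phi_1$ along segments together with Lemma~\ref{lemma:limit-dist}-(iii); and it deals with interior points by enclosing them in a triangle with vertices on $\partial\triangle\setminus\{(0,0)\}$ whose image shrinks to $\mathfrak{p}_{\mathfrak{L}}$. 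You instead pass to homogeneous coordinates (the same matrices $M_0,M_1$ of \eqref{matrici-01}) and derive the single identity $\mathcal{M}_k(q,p,r)^T=(q-p)X_{k-1}+pX_k+rX_{k-2}$, which reduces every case at once to a routine estimate on convex weights and uses only Lemma~\ref{lemma:limit-dist}-(i); this is more unified, avoids the unproved monotonicity claim, and dispenses with parts (ii)--(iii) of that lemma, at the price of a small amount of matrix bookkeeping. One point to tighten: the projective identities $M_k(0,0)=\hat X_{k-1}$, $M_k(1,0)=\hat X_k$, $M_k(1,1)=\hat X_{k-2}\oplus\hat X_k$ determine the columns of $\mathcal{M}_k$ only up to positive scalar multiples, so the exact normalisation $\left(X_{k-1},\,X_k-X_{k-1},\,X_{k-2}\right)$ needs a short separate check; it follows by induction from $\mathcal{M}_k=\mathcal{M}_{k-1}M_1^{\alpha_k}M_0$, since $M_1^{\alpha_k}M_0$ has columns $(1,1,0)^T$, $(\alpha_k,0,1)^T$, $(1,0,0)^T$ and this reproduces exactly the recurrence \eqref{eq:X_k}. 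With that verified, the asymptotic part of your argument (weights of $\hat X_{k-1}$ and $\hat X_{k-2}$ vanish because $q_{k-1}/q_k\to 0$ and $q_{k-2}/q_k\to 0$, while the points stay in the bounded set $\trianglecl$) is sound, and the $(0,0)$ case drops out of the same formula as $M_k(0,0)=\hat X_{k-1}$, recovering the swapped limits.
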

\begin{proof}
    We give the proof just for the case of even indices, the odd case
    is analogous. By notation we have that
    $\hat X_{2k} = \phi_1^{\alpha_0}\phi_0\cdots
    \phi_1^{\alpha_{2k}}\phi_0(1,0)$ converges to
    $\mathfrak{p}_{\mathfrak L}$ and that
    $\hat X_{2k+1} = \phi_1^{\alpha_0}\phi_0\cdots
    \phi_1^{\alpha_{2k+1}}\phi_0(1,0)$ converges to
    $\mathfrak{q}_{\mathfrak L}$. Using that
    $(1,0)=\phi_1 \phi_0(0,0)$, we can write
    $\hat X_{2k+1} = \phi_1^{\alpha_0}\phi_0\cdots
    \phi_1^{\alpha_{2k+2}}\phi_0(0,0)$, so that
    $\lim_{k\rightarrow+\infty} \phi_1^{\alpha_0}\phi_0\cdots
    \phi_1^{\alpha_{2k}}\phi_0(0,0) = \mathfrak{q}_{\mathfrak{L}}$.\\[0.15cm]
    Let now $(x,y)\in \trianglecl\setminus\set{(0,0)}$ and notice that
    if
    $\lim_{k\rightarrow+\infty} \phi_1^{\alpha_0}\phi_0\cdots
    \phi_1^{\alpha_{2k}}\phi_0(x,y)$ exists then it must lie on the
    line segment $\mathfrak{L}$. We start considering the case when
    $(x,y)$ is on the boundary of
    $\trianglecl\setminus\{(0,0)\}$. Suppose that
    $(x,y)\in \Lambda\setminus\set{(0,0)}$, so that $(x,y)=(\xi,0)$
    for a certain $\xi\in (0,1]$, and let $s$ be a positive integer
    such that $\frac 1s<\xi$. Thus
    $(\frac 1s, 0) = (1,0)\oplus_{s-1} (0,0)$ . Writing $\hat X_{2k}$
    and $\hat X_{2k-1}$ as above, we have
    \[
        \phi_1^{\alpha_0}\phi_0\cdots
        \phi_1^{\alpha_{2k}}\phi_0\left(\frac 1s,0\right) =
        \phi_1^{\alpha_0}\phi_0\cdots
        \phi_1^{\alpha_{2k}}\phi_0(1,0)\oplus_{s-1}
        \phi_1^{\alpha_0}\phi_0\cdots \phi_1^{\alpha_{2k}}\phi_0(0,0)
        = \hat X_{2k}\oplus_{s-1} \hat X_{2k-1}.
    \]
    where we have used that $\phi_0$ and $\phi_1$ commute with the
    mediant operation. Moreover $\phi_0$ and $\phi_1$ are monotonic
    along line segments (with respect to the lexicographic order) and
    $(\frac 1s,0)\leq_{lex} (\xi,0)\leq_{lex} (1,0)$, hence
    \[
        0\leq d(\phi_1^{\alpha_0}\phi_0\cdots
        \phi_1^{\alpha_{2k}}\phi_0\left(\xi,0\right),\hat X_{2k})\leq
        d\left(\phi_1^{\alpha_0}\phi_0\cdots
          \phi_1^{\alpha_{2k}}\phi_0\left(\frac 1s,0\right),\hat
          X_{2k}\right)= d\left(\hat X_{2k}\oplus_{s-1} \hat X_{2k-1},\hat
          X_{2k}\right).
    \]
    Lemma~\ref{lemma:limit-dist}-(iii) gives
    $d(\hat X_{2k}\oplus_{s-1} \hat X_{2k-1},\hat
      X_{2k})\rightarrow 0$ for $k\rightarrow +\infty$, thus
    $d(\phi_1^{\alpha_0}\phi_0\cdots
      \phi_1^{\alpha_{2k}}\phi_0(\xi,0),\hat X_{2k})\rightarrow
    0$, which means that
    $\lim_{k\rightarrow+\infty} \phi_1^{\alpha_0}\phi_0\cdots
    \phi_1^{\alpha_{2k}}\phi_0(\xi,0)=\mathfrak{p}_{\mathfrak L}$. If
    $(x,y)\in \Upsilon$ or $(x,y)\in \Sigma\setminus\{(0,0)\}$ we can
    argue as above to conclude that
    $\lim_{k\rightarrow+\infty} \phi_1^{\alpha_0}\phi_0\cdots
    \phi_1^{\alpha_{2k}}\phi_0(x,y)=\mathfrak{p}_{\mathfrak L}$.\\[0.15cm]
    Finally, let $(x,y)$ be an interior point of the triangle
    $\triangle$ and let $A$ be a triangle containing $(x,y)$ as
    interior point and having all the vertices along
    $\partial \triangle\setminus\{(0,0)\}$. The maps $\phi_0$ and
    $\phi_1$ map triangles into triangles and the same does the
    composite map
    $\phi_1^{\alpha_0}\phi_0\cdots \phi_1^{\alpha_{2k}}\phi_0$. Thus
    for all $k\geq 0$ the image
    $\phi_1^{\alpha_0}\phi_0\cdots \phi_1^{\alpha_{2k}}\phi_0(A)$ is a
    triangle containing
    $\phi_1^{\alpha_0}\phi_0\cdots \phi_1^{\alpha_{2k}}\phi_0(x,y)$ as
    an interior point. The thesis follows by observing that
    $\phi_1^{\alpha_0}\phi_0\cdots \phi_1^{\alpha_{2k}}\phi_0(A)$
    shrinks to $\mathfrak{p}_{\mathfrak{L}}$ because its three
    vertices converge to $\mathfrak{p}_{\mathfrak{L}}$.
\end{proof}

The last result will be important in Section~\ref{sec:coding} to
better understand the coding of real pairs in the non-convergent case.


\section{A two-dimensional representation}
\label{sec:coding}

In this section we begin to use our construction of the Triangular
tree and the properties of the maps $\phi_0$, $\phi_1$ and $\phi_2$
defined in \eqref{def-fi0}-\eqref{def-fi2} to introduce a new
representation of real pairs of numbers in $\trianglecl$ by combining
triangle sequences and continued fraction expansions. We recall that
for any finite binary word $\omega \in \{0,1\}^*$ of length $n$, we
let
$\phi_\omega
\coloneqq\phi_{\omega_0}\circ\phi_{\omega_1}\circ\dots\circ
\phi_{\omega_{n-1}}$.

\begin{lemma}\label{lemma:key}
    Let $(\alpha,\beta)$ be a point of $\trianglecl$ with finite
    triangle sequence $[\alpha_0\,\,\ldots,\,\alpha_k]$. If
    $(\alpha,\beta)$ is an interior point, then:
     \begin{enumerate}
       \item
         $(\alpha,\beta)\in
         \phi_{\omega}\phi_2(\Sigma\setminus\{(0,0),(1,1)\})$, where
         $\omega = 1^{\alpha_0}0\cdots
         1^{\alpha_{k-1}}01^{\alpha_k}0$, so that there exists a
         unique $\xi\in (0,1)$ such that
         \begin{equation}\label{eq:backSigma}
             (\alpha,\beta) =
             \phi_1^{\alpha_0}\phi_0\cdots\phi_1^{\alpha_k}\phi_0\phi_2(\xi,\xi);
         \end{equation}
       \item $\xi$ is rational if and only if
         $(\alpha,\beta)\in \Q^2$.
     \end{enumerate}
\end{lemma}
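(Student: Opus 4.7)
The plan is to run the dynamics forward via the triangle sequence, land on $\Lambda$, and then undo the dynamics branch by branch. By definition, $T^j(\alpha,\beta)\in\triangle_{\alpha_j}$ for $j=0,\ldots,k$, and finiteness of the triangle sequence forces $T^{k+1}(\alpha,\beta)\in \Lambda$, so I would set $T^{k+1}(\alpha,\beta)=(\xi,0)$ for some $\xi\in[0,1]$. Expanding each $T$ as $S|_{\Gamma_0}\circ S|_{\Gamma_1}^{\alpha_j}$ via~\eqref{eq:T-jumpS} and then applying the local inverses $\phi_0=(S|_{\Gamma_0})^{-1}$ and $\phi_1=(S|_{\Gamma_1})^{-1}$ in reverse order should yield
\[
(\alpha,\beta)=\phi_1^{\alpha_0}\phi_0\cdots\phi_1^{\alpha_k}\phi_0(\xi,0)=\phi_1^{\alpha_0}\phi_0\cdots\phi_1^{\alpha_k}\phi_0\phi_2(\xi,\xi),
\]
which is~\eqref{eq:backSigma}; uniqueness of $\xi$ then comes for free from the injectivity of each of $\phi_0$, $\phi_1$ and $\phi_2$.

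The main obstacle I expect is verifying $\xi\in(0,1)$, so that $(\xi,\xi)\in\Sigma\setminus\{(0,0),(1,1)\}$. Ruling out $\xi=0$ is immediate because every $(x,y)\in\triangle$ has $y>0$, hence the first coordinate $y/x$ of $T(x,y)$ is strictly positive. Ruling out $\xi=1$ is subtler: the only preimage of $(1,0)$ under $T|_{\triangle_j}$ is $(1/(j+1),1/(j+1))\in\Sigma$, so I would need to show that the forward $T$-orbit of an interior point never meets $\Sigma$. The key observation will be that $T(x,y)\in\Sigma$ for $(x,y)\in\triangle_j$ forces $x+(j+1)y=1$, which is excluded by the strict inequality $1-x-(j+1)y<0$ in the definition of $\triangle_j$. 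Hence $T(\triangle)\cap\Sigma=\emptyset$, so no iterate of an interior point can land on $\Sigma$, and $\xi\neq 1$.

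For part (ii), I would rely on the explicit rational formulas~\eqref{def-fi0}, \eqref{def-fi1} and~\eqref{def-fi2}: $\phi_0$, $\phi_1$, $\phi_2$ and their local inverses (the two branches of $S$ in~\eqref{eq-slow}) all have integer coefficients, so both the composition $\phi_\omega\phi_2$ and its inverse preserve $\Q^2$. Combined with the trivial observation that $(\xi,\xi)\in\Q^2$ if and only if $\xi\in\Q$, this delivers the equivalence $(\alpha,\beta)\in\Q^2\iff\xi\in\Q$ with no further computation.
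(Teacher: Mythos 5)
Your proposal is correct and follows essentially the same route as the paper: iterate $T$ forward to land on $\Lambda$ at $(\xi,0)=\phi_2(\xi,\xi)$, invert step by step through \eqref{eq:T-jumpS} using $\phi_0,\phi_1$ (exactly the argument of Lemma~\ref{lemma:delta_0k}), and deduce (ii) from the fact that the maps involved are linear fractional with integer coefficients. Your extra verification that $\xi\neq 0,1$ (via $y>0$ and $T(\triangle_j)\cap\Sigma=\emptyset$ from the strict inequality $1-x-(j+1)y<0$) is sound and in fact fills in a point the paper's proof leaves implicit.
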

\begin{proof}
    (i) By definition of triangle sequences we have
    \[
        T^j(\alpha,\beta)\in \triangle_{\alpha_j}\quad\text{for all
          $j=0,\,\ldots,\,k$}\quad\text{and}\quad
        T^{k+1}(\alpha,\beta)\in\Lambda.
    \]
    Thus $T^{k+1}(\alpha,\beta)=(\xi,0)=\phi_2(\xi,\xi)$ for some
    $\xi\in (0,1)$. From~\eqref{eq:T-jumpS} and arguing as in
    Lemma~\ref{lemma:delta_0k} the thesis follows.\\[0.1cm]
    (ii) From~(i) we have that $(\alpha,\beta)$ can be obtained from
    $(\xi,0)$ with the application of a finite number of maps between
    $\phi_0$ and $\phi_1$, which are linear fractional maps.
\end{proof}

\begin{remark}
    Let $(\alpha,\beta)$ be an interior point having finite triangle
    sequence $[\alpha_0,\,\ldots,\,\alpha_k]$. Note that $\alpha_k>0$
    because the only points in $\trianglecl$ for which the triangle
    sequence is defined and ends with $0$ are located on
    $(\Sigma\cup \Upsilon) \setminus\Lambda$. In light of this simple
    remark we can rewrite~\eqref{eq:backSigma} as
    \[
        (\alpha,\beta)=\phi_1^{\alpha_0}\phi_0 \cdots
        \phi_1^{\alpha_k-1}\left(\frac 1{1+\xi},\frac{\xi}{1+\xi}\right),
    \]
    so that $(\alpha,\beta)\in \phi_\omega(\ell)$ where
    $\omega = 1^{\alpha_0}0\cdots 1^{\alpha_k-1}$.
\end{remark}

\begin{definition} \label{repres-interior}
    Let $(\alpha,\beta)$ be an interior point of $\trianglecl$ with finite
    triangle sequence $[\alpha_0,\,\ldots,\,\alpha_k]$, and let
    $[a_1,\,a_2,\,\ldots]$ be the continued fraction expansion of the
    unique number $\xi$ given by Lemma~\ref{lemma:key}-(ii). We
    associate to $(\alpha,\beta)$ the \emph{representation} given by the
    pair
    \[
        \Big([\alpha_0,\ldots,\alpha_k],[a_1,\ldots]\Big).
    \]
    As for the second component, we write $[0]$ and $[1]$ to
    denote the continued fraction expansion of $0$ and $1$,
    respectively.
\end{definition}

\begin{remark}\label{remark:ell}
    If $(\alpha,\beta)$ is a rational pair, then $\xi$ is rational
    (Lemma~\ref{lemma:key}-(iii)) and thus its continued fraction
    expansion is finite, say $[a_1,\,\ldots,\,a_n]$. In this case we
    further assume that $a_n>1$ when $n>1$.
\end{remark}

\noindent We give further details for the coding of boundary rational
points and, in particular, for the vertices of the triangle
$\triangle$.
\begin{enumerate}[label={\upshape(\arabic*)},wide =
    0pt,leftmargin=*,labelsep=0.2cm]
  \item A point on $\Sigma\setminus\{(0,0),(1,1)\}$ is of the kind
    $(\xi,\xi)$, for some $\xi$ in the open unit interval. If
    $[a_1,\,\ldots]$ is the continued fraction expansion of $\xi$,
    then $\frac{1}{a_1-1}<\xi\leq \frac 1{a_1}$, so that
    $(\xi,\xi)\in \triangle_{a_1-1}$. In case $\xi=\frac{1}{a_1}$ we
    have $T(\xi,\xi)=(1,0)\in \Lambda$, so that the triangle sequence
    of $(\xi,\xi)$ is $[a_1-1]$. Otherwise, if
    $\frac 1{a_1-1}<\xi<\frac 1{a_1}$, we have
    $T(\xi,\xi)\in \Upsilon\setminus\{(1,0)\}$, so that the triangle
    sequence of $(\xi,\xi)$ is $[a_1-1,0]$. Thus we set the representation of
    $(\xi,\xi)$ to be respectively
    \[
        \Big([a_1-1],[a_1]\Big) \quad\text{or}\quad
        \Big([a_1-1,0],[a_1,\,\ldots]\Big).
    \]
  \item A point in $\Lambda$ is of the kind $(\xi,0)$, for some $\xi$
    in the unit interval, and its triangle sequence is not defined and
    assumed to be empty. We thus represent $(\xi,0)$ with
    \[
        \Big([],[a_1,\,\ldots]\Big),
    \]
    where $[a_1,\,\ldots]$ is the continued fraction expansion
    of $\xi$. In particular, the representation of the vertices $(0,0)$ and
    $(1,0)$ are $\left([],[0]\right)$ and $\left([],[1]\right)$,
    respectively.
  \item A point in $\Upsilon\setminus\{(1,0)\}$ is of the kind
    $(1,\xi)$, for some $\xi\in (0,1]$, and has triangle sequence
    $[0]$. The representation of $(1,\xi)$ is thus
    \[
        \Big([0],[a_1,\,\ldots]\Big),
    \]
    where $[a_1,\,\ldots]$ is the continued fraction expansion
    of $\xi$. In particular, the representation of the vertex $(1,1)$ is
    $\left([0],[1]\right)$.
\end{enumerate}

\vskip 0.2cm The representation of real pairs with infinite triangle
sequences depends on the convergence of the sequence. We have seen in
Section \ref{sec:triangle-seq} that if $(\alpha,\beta)$ is a real pair
in $\trianglecl$ with convergent infinite triangle sequence
$[\alpha_0,\alpha_1,\dots]$, then
\[
    \{(\alpha,\beta)\} = \bigcap_{k\ge 0}\, \triangle(\alpha_0,\,\ldots,\,\alpha_k)
\]
This shows that in this case it is enough to associate to $(\alpha,\beta)$ its triangle sequence, since
\begin{equation} \label{converg-ok}
\lim_{k\to \infty}\, \phi_1^{\alpha_0}\phi_0\cdots\phi_1^{\alpha_k}\phi_0 (x,y) =  (\alpha,\beta)
\end{equation}
for all $(x,y) \in \trianglecl$. In the definition below we choose
$(x,y)=(\frac 12,0)$, but other choices would work as well.

\begin{definition} \label{repres-infinite-conv} Let $(\alpha,\beta)$
    be a point of $\trianglecl$ with convergent infinite triangle
    sequence $[\alpha_0,\alpha_1,\ldots]$. We associate to
    $(\alpha,\beta)$ the \emph{representation} given by the pair
    \[
        \Big([\alpha_0,\alpha_1,\ldots],[2]\Big).
    \]
\end{definition}

Let us now consider the case of non-convergent infinite triangle
sequences. In this case a line segment $\mathfrak{L}$ is associated to
such a sequence, and we refer to Proposition \ref{prop:nonconvergent}
for the notation of its endpoints and more properties. Using these
results we give the following
definition. 

\begin{definition}
    \label{repres-infinite-nonconv} Let $(\alpha,\beta)$ be a point of
    $\trianglecl$ with non-convergent infinite triangle sequence
    $[\alpha_0,\alpha_1,\ldots]$. Then $(\alpha,\beta)$ belongs to a
    line segment $\mathfrak{L}$ of real pairs having the same triangle
    sequence, with endpoints $\mathfrak{p}_\mathfrak{L}$ and
    $\mathfrak{q}_\mathfrak{L}$. Then we consider
    \[
    	\Big([\alpha_0,\alpha_1,\ldots],[2]\Big).
    \]
    to be the \emph{representation} of the segment $\mathfrak{L}$.
\end{definition}


\section{The Triangular coding}
\label{sec:rationals}

In this section we use the ideas exposed in Section \ref{sec:coding}
to introduce a coding for the rational pairs in the Triangular tree in
an analogous way as for the Farey coding.  As recalled in
Section~\ref{sec:farey-coding}, the continued fraction expansion of a
rational number in $(0,1)$ is related to the path on the Farey tree to
reach it starting from the root $\frac 12$. This information is
contained in the $\{L,R\}$ coding, which in turn can be seen as the
action by right multiplication of two matrices $L,R\in SL(2,\Z)$ on
the matrix representation of rational numbers. We now generalise this
setting for the two-dimensional case by first defining a coding for
the possible moves from parents to children along the Triangular tree
in figure \ref{fig:triangle_levels}. Then we introduce a matrix
representation for rational pairs and convert the action of the moves
on the tree into the action by right multiplication of specific
$SL(3,\Z)$ matrices.

\begin{definition}
    \label{def:actionsLR-2d} Let $\mathfrak{S}$ be a line segment in
    $\trianglecl$ obtained as a counterimage of
    $\Sigma\setminus\set{(0,0),(1,1)}$ by a combination
    $\phi_{\tilde \omega}$ of the maps $\phi_0$, $\phi_1$ and
    $\phi_2$, where $\tilde\omega$ is a finite word which is either
    empty or is the concatenation $(\omega 2)$ with
    $\omega\in \{0,1\}^*$. We consider on $\mathfrak{S}$ the
    orientation induced by the lexicographic ordering on $\Sigma$ by
    the map $\phi_{\tilde \omega}$.\\
    A rational pair $(\frac pq,\frac rq)$ in $\mathfrak{S}$ is
    obtained in the Triangular tree as the mediant of the neighbouring
    pairs, its parents. Then we define \emph{two actions, $L$ and $R$,
      on $(\frac pq,\frac rq)$,} as follows: $(\frac pq,\frac rq)\, L$
    is the rational pair obtained as the mediant of
    $(\frac pq,\frac rq)$ with its left parent and
    $(\frac pq,\frac rq)\, R$ is the rational pair obtained as the
    mediant of $(\frac pq,\frac rq)$ with its right parent.
\end{definition}

\begin{lemma}\label{lemma:boundaryLR}
    Let $\frac ab \in \Q \cap (0,1)$ and let $[a_1,\ldots,a_n]$ be its
    continued fraction expansion. We have
    \[
        \triple{a}{a}{b} =
        \begin{cases}
            \triple{1}{1}{2} L^{a_1-1}R^{a_2}\cdots
            L^{a_{n-1}}R^{a_n-1}\, , & \text{if $n$ is even}\\[0.1cm]
            \triple{1}{1}{2} L^{a_1-1}R^{a_2}\cdots
            R^{a_{n-1}}L^{a_n-1}\,  & \text{if $n$ is odd}
        \end{cases}
    \]
    The same combination of actions sends $\singy{1}{2}{0}$ to
    $\singy{a}{b}{0}$ on $\Lambda$ and $\singx{1}{1}{2}$ to
    $\singx{1}{a}{b}$ on $\Upsilon$.
\end{lemma}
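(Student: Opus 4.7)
The proof strategy is to reduce the claim to the Farey-tree formula~\eqref{m(x)cf} by identifying, under natural parameterizations, the restriction of the Triangular tree to each of the three boundary segments $\Sigma$, $\Lambda$, $\Upsilon$ with the classical Farey tree. First I would introduce the order-preserving bijections $\iota_\Sigma(\triple{a}{a}{b}) = a/b$, $\iota_\Lambda(\singy{a}{b}{0}) = a/b$, and $\iota_\Upsilon(\singx{1}{a}{b}) = a/b$, which also map the endpoints of each segment to $0$ and $1$. The orientations on $\Lambda$ and $\Upsilon$ transported from $\Sigma$ via $\phi_2$ and $\phi_0\circ\phi_2$ are consistent with the standard order on $[0,1]$, and a direct computation shows that these maps intertwine the paper's mediant operation with the classical Farey mediant, for instance $\iota_\Sigma(\triple{p}{p}{q}\oplus\triple{p'}{p'}{q'}) = (p+p')/(q+q')$.

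The main technical step is to prove by induction on $n\geq 0$ that $\Sigma\cap(\mathcal{T}_{-1}\cup\cdots\cup\mathcal{T}_n)$ corresponds under $\iota_\Sigma$ exactly to the Stern-Brocot set $\mathcal{F}_n$, with ordering preserved. The cleanest route uses the geometric construction of Section~\ref{sec:tree}: the sides of the partition $\mathscr{P}_n$ lying along $\Sigma$ split $\Sigma$ into sub-segments whose endpoints are precisely $\Sigma\cap\mathcal{S}_n$, and the Farey-sum operation adds the mediant of each consecutive pair on $\Sigma$. Under $\iota_\Sigma$ this is exactly the classical Stern-Brocot process. Combined with $\mathcal{T}_n = \mathcal{S}_n\setminus\mathcal{S}_{n-1}$ from \cite[Theorem~5.8]{cas}, one obtains the claim on $\Sigma$. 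The analogous statements for $\Lambda$ and $\Upsilon$ then follow level by level from rule~\ref{rule2} (sending $\Sigma$-points to $\Lambda$-points via $\phi_2$) and rule~\ref{rule3} (sending $\Lambda$-points to $\Upsilon$-points via $\phi_0$), both of which are order-preserving bijections that commute with the mediant.

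Once the three boundary restrictions are identified with the Farey tree, the parents of $\triple{a}{a}{b}$ on $\Sigma$ in the sense of Definition~\ref{def:actionsLR-2d} correspond under $\iota_\Sigma$ exactly to the Farey parents of $a/b$, so the actions $L$ and $R$ of Definition~\ref{def:actionsLR-2d} on any of the three boundary segments coincide with the classical Farey $L,R$ moves. Since the roots $\triple{1}{1}{2}$, $\singy{1}{2}{0}$, $\singx{1}{1}{2}$ of $\mathcal{T}_0$ all correspond to the Farey root $1/2$, I would invoke Isola's proposition of Section~\ref{sec:farey-coding}: the sequence of Farey moves leading from $1/2$ to $a/b=[a_1,\ldots,a_n]$ is precisely $L^{a_1-1}R^{a_2}\cdots L^{a_{n-1}}R^{a_n-1}$ when $n$ is even and $L^{a_1-1}R^{a_2}\cdots R^{a_{n-1}}L^{a_n-1}$ when $n$ is odd. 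Transporting this through each of the three parameterizations simultaneously yields the three claimed formulae.

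The main obstacle is the inductive step: verifying that the partition $\mathscr{P}_n$ restricted to $\Sigma$ splits the slanting side precisely at the points of $\mathcal{S}_n\cap\Sigma$, so that the Farey-sum operation is genuinely the Stern-Brocot process under $\iota_\Sigma$, and that the bijections $\phi_2$ and $\phi_0$ carry the $\Sigma$-picture faithfully to $\Lambda$ and $\Upsilon$. This essentially amounts to careful bookkeeping of the vertex-relabelling of Figure~\ref{fig:relabel} along the three boundary segments; once the $\Sigma$ case is handled, the $\Lambda$ and $\Upsilon$ cases come essentially for free.
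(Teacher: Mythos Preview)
Your approach is correct and follows the same underlying idea as the paper: identify each boundary segment with the classical Farey tree via the obvious parameterization, so that the $L$ and $R$ actions of Definition~\ref{def:actionsLR-2d} become the classical Farey moves, and then invoke the formula~\eqref{m(x)cf}. The paper's proof is essentially a two-line version of yours: it cites~\eqref{m(x)cf} for the Farey path from $\tfrac12$ to $\tfrac ab$ and simply observes that, by Definition~\ref{def:actionsLR-2d}, the same word gives the path from $\triple{1}{1}{2}$ to $\triple{a}{a}{b}$ on $\Sigma$ (and likewise on $\Lambda$ and $\Upsilon$), treating as evident the fact that the mediant-and-parent structure on each boundary segment is the Stern--Brocot structure.

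What you add is a careful justification of that ``evident'' step: the inductive identification of $\Sigma\cap\bigcup_{j\le n}\mathcal{T}_j$ with $\mathcal{F}_n$ via the geometric construction and \cite[Theorem~5.8]{cas}, and the transport to $\Lambda$ and $\Upsilon$ through rules~\ref{rule2}--\ref{rule3}. This is genuinely useful bookkeeping and makes explicit what the paper leaves to the reader, but it is not a different route---it is the same argument written out in full.
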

\begin{proof}
    According to~\eqref{m(x)cf}, the path on the Farey tree from
    $\frac 12$ to $\frac ab$ is
    $L^{a_1-1}R^{a_2}\cdots L^{a_{n-1}}R^{a_n-1}$ if $n$ is even and
    $L^{a_1-1}R^{a_2}\cdots R^{a_{n-1}}L^{a_n-1}$ if $n$ is odd. By
    the definition of $L$ and $R$ in Definition
    \ref{def:actionsLR-2d}, this is also the path to reach
    $\triple{a}{a}{b}$ from $\triple{1}{1}{2}$. The same holds on
    $\Lambda$ and $\Upsilon$.
\end{proof}

\begin{remark}
    In the case $n=1$ the above formula reads
    $\triple{a}{a}{b} =\triple{1}{1}{2} L^{a_1-2}$. The same holds
    also in Theorem~\ref{prop:interiorLRI} below.
\end{remark}

The previous lemma shows that the basic moves $L$ and $R$ are enough
to reach every boundary rational pair starting from the midpoints of
the sides of $\triangle$. We now describe how to reach interior
rational pairs along the tree always starting from
$\triple{1}{1}{2}$. Recall that we denote by $\ell$ the open line
segment joining $(1,0)$ and $\couple{1}{2}{1}{2}$. Since interior
rational pairs are located on backward images $\phi_\omega(\ell)$ with
$\omega\in \{0,1\}^*$, our strategy is divided into two steps: first
we describe a path from $\triple{1}{1}{2}$ to the mediant between the
endpoints of $\phi_\omega(\ell)$, and then we encode the sequence of
moves along $\phi_\omega(\ell)$ to reach the considered rational pair.

\begin{definition}
    Let $\omega\in \{0,1\}^*$ with $n=|\omega|\ge 0$ and consider the
    triangle $\triangle_\omega=\phi_\omega(\trianglecl)$, partitioned
    by $\phi_\omega(\ell)$ into two subtriangles. The \emph{action of
      the symbol $I$} on $\phi_\omega\triple{1}{1}{2}$, which we
    denote as a right action by $\phi_\omega\triple{1}{1}{2}I$, gives
    the mediant between $\phi_\omega\triple{1}{1}{2}$ and
    $\phi_\omega(1,0)$. In other words,
    \[
        \phi_\omega\triple{1}{1}{2}I\coloneqq
        \phi_\omega\triple{1}{1}{2}\oplus \phi_\omega(1,0) =
        \phi_\omega\triple{2}{1}{3}.
    \]
\end{definition}

We now give a geometric interpretation of this definition, also
clarified by Figure~\ref{fig:I-action}. The point
$\phi_\omega\triple{1}{1}{2}$ is the right endpoint of
$\phi_\omega(\ell)$ and the action of $I$ is to step from
$\phi_\omega\triple{1}{1}{2}$ to the mediant between the two endpoints
of $\phi_\omega(\ell)$, which is one of the children of
$\phi_\omega\triple{1}{1}{2}$.

\begin{figure}[h]
    \begin{tikzpicture}[scale=1.5]

    \draw[thick] (0,0) node[below left]{$\phi_\omega(0,0)$} -- (4,-1)
    node[below right]{$\phi_\omega(1,0)$} -- (1.9,1.9)
    node[above]{$\phi_\omega(1,1)$} -- cycle;

    \draw[thick] (1,1)
    node[above left]{$\phi_\omega\left(\frac 12, \frac 12\right)$} -- (4,-1);

    \node[gray] at (2,1/3) {\huge $\bullet$};

    \node[below left] at (2,1/3)
    {$\phi_\omega\left(\frac 12, \frac 12\right)I$};

\end{tikzpicture}
    \caption{Right action of the symbol $I$.}\label{fig:I-action}
    \end{figure}
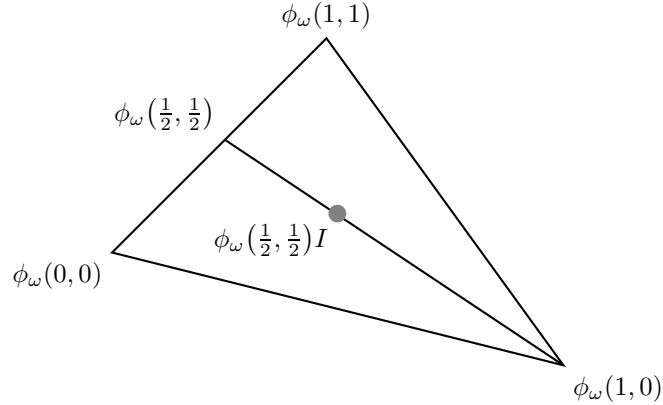

\begin{lemma}\label{lemma:interiorLI}
    Let $n\geq 0$ and let $\omega\in \{0,1\}^*$ with
    $|\omega|=n$. Then
    \[
        \phi_\omega\triple{1}{1}{2} = \triple{1}{1}{2}
        \prod_{i=1}^n W_i,
    \]
    where $W_i=I$ if $\omega_i=0$ and $W_i=L$ if $\omega_i=1$.
\end{lemma}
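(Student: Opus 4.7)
The plan is to proceed by induction on $n = |\omega|$. The base case $n=0$ is immediate, since both sides reduce to $\triple{1}{1}{2}$ (empty composition on the left, empty product on the right).

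For the inductive step, decompose $\omega = \omega' \cdot \omega_n$, so that $\phi_\omega = \phi_{\omega'} \circ \phi_{\omega_n}$. The first observation is a direct calculation from \eqref{def-fi0} and \eqref{def-fi1}, which yields
\[
\phi_0\triple{1}{1}{2} = \triple{2}{1}{3} = \triple{1}{1}{2} \oplus (1,0), \qquad \phi_1\triple{1}{1}{2} = \triple{1}{1}{3} = \triple{1}{1}{2} \oplus (0,0).
\]
Using that $\phi_0$ and $\phi_1$ commute with the mediant operation for pairs sharing a common denominator (recalled just before Lemma~\ref{lemma:delta_0k}), applying $\phi_{\omega'}$ to the identities above gives
\[
\phi_\omega\triple{1}{1}{2} \;=\; \phi_{\omega'}\triple{1}{1}{2} \,\oplus\, \phi_{\omega'}(v_{\omega_n}),
\]
where $v_0 = (1,0)$ and $v_1 = (0,0)$.

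The heart of the argument is recognizing the right-hand side as exactly the right action of $I$ or $L$ on $\phi_{\omega'}\triple{1}{1}{2}$. For the case $\omega_n=0$, this is the very definition of $I$: $\phi_{\omega'}\triple{1}{1}{2}\cdot I = \phi_{\omega'}\triple{1}{1}{2}\oplus \phi_{\omega'}(1,0)$. For the case $\omega_n=1$, we use that $\phi_{\omega'}\triple{1}{1}{2}$ lies on the segment $\phi_{\omega'}(\Sigma)$, a side of the partition $\mathscr{P}_{|\omega'|}$; the orientation induced by the lex order on $\Sigma$ via $\phi_{\omega'}$ identifies $\phi_{\omega'}(0,0)$ as the left endpoint and hence the left parent of $\phi_{\omega'}\triple{1}{1}{2}$. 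By Definition~\ref{def:actionsLR-2d}, therefore $\phi_{\omega'}\triple{1}{1}{2}\cdot L = \phi_{\omega'}\triple{1}{1}{2}\oplus \phi_{\omega'}(0,0)$.

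Combining this identification with the inductive hypothesis $\phi_{\omega'}\triple{1}{1}{2} = \triple{1}{1}{2}\prod_{i=1}^{n-1} W_i$ and appending $W_n = I$ (resp.\ $W_n = L$) when $\omega_n = 0$ (resp.\ $\omega_n = 1$) closes the induction. I expect the main (minor) obstacle to be verifying that the orientation convention of Definition~\ref{def:actionsLR-2d} applies coherently to the segment $\phi_{\omega'}(\Sigma)$ — that is, checking it fits into the allowed form $\phi_{\tilde\omega}(\Sigma\setminus\{(0,0),(1,1)\})$ and that $\phi_{\omega'}(v_0)$ really comes out as the left parent; beyond this bookkeeping, the argument is just commutativity with the mediant plus the two base computations.
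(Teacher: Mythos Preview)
Your proposal is correct and follows essentially the same route as the paper: induction on $|\omega|$, peeling off the last letter, handling the $0$-case directly from the definition of $I$ (via $\phi_0\triple{1}{1}{2}=\triple{2}{1}{3}$), and handling the $1$-case by identifying $\phi_{\omega'}(0,0)$ as the left parent of $\phi_{\omega'}\triple{1}{1}{2}$. The paper states the parent identification for the $L$-case without further comment, so your flagged ``minor obstacle'' about fitting $\phi_{\omega'}(\Sigma)$ into the framework of Definition~\ref{def:actionsLR-2d} is treated there as understood from the tree construction.
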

\begin{proof}
    We argue by induction on $n\geq 0$. The conclusion is trivial when
    $n=0$. Suppose that the thesis holds for all the finite binary
    words of length $n$ and let $\omega$ be a word with
    $|\omega|=n+1$.  Thus $\omega=\tilde\omega 0$ or
    $\omega=\tilde\omega 1$ for some word $\tilde\omega$ of length
    $n$. By definition of $I$ we have
    $\phi_{\tilde\omega}\triple{1}{1}{2}I =
    \phi_{\tilde\omega}\triple{2}{1}{3} = \phi_{\tilde\omega
      0}\triple{1}{1}{2}$, thus the thesis holds for $\tilde\omega
    0$. For $\tilde\omega 1$ note that the left and right parents of
    the point $\phi_\omega\triple{1}{1}{2}$ are $\phi_\omega(0,0)$ and
    $\phi_\omega(1,1)$, respectively. Hence
    \[
        \phi_{\tilde\omega}\triple{1}{1}{2}L=
        \phi_{\tilde\omega}\triple{1}{1}{2}\oplus
        \phi_{\tilde\omega}(0,0) = \phi_{\tilde\omega}\triple{1}{1}{3}
        = \phi_{\tilde\omega 1}\triple{1}{1}{2},
    \]
    so that the thesis is also true for $\tilde\omega 1$.
\end{proof}

\begin{theorem}\label{prop:interiorLRI}
    Let $(\frac pq,\frac rq)$ be the interior rational pair with
    representation $([\alpha_0,\ldots,\alpha_k],[a_1,\ldots,a_n])$
    (see Definition~\ref{repres-interior}) and let
    $\omega=1^{\alpha_0}0\cdots 1^{\alpha_{k-1}}01^{\alpha_k-1}$, so
    that $(\frac pq,\frac rq)\in \phi_\omega(\ell)$. Then:
    \begin{enumerate}
      \item
        $\phi_\omega\triple{1}{1}{2} =
        \triple{1}{1}{2} L^{\alpha_0}I\cdots
        L^{\alpha_{k-1}}IL^{\alpha_k-1}$;
      \item
        $\triple{p}{r}{q} =
        \begin{cases}
            \phi_\omega\triple{1}{1}{2} IL^{a_1-1}R^{a_2}\cdots
            L^{a_{n-1}}R^{a_n-1}\,  & \text{if $n$ is even}\\[0.1cm]
            \phi_\omega\triple{1}{1}{2} IL^{a_1-1}R^{a_2}\cdots
            R^{a_{n-1}}L^{a_n-1}\, , & \text{if $n$ is odd}
        \end{cases}$.
    \end{enumerate}
    Hence
    \[
        \triple{p}{r}{q} =
        \begin{cases}
            \triple{1}{1}{2} L^{\alpha_0}I\cdots
            L^{\alpha_{k-1}}IL^{\alpha_k-1}IL^{a_1-1}R^{a_2}\cdots
            L^{a_{n-1}}R^{a_n-1}\, , & \text{if $n$ is even}\\[0.1cm]
            \triple{1}{1}{2} L^{\alpha_0}I\cdots
            L^{\alpha_{k-1}}IL^{\alpha_k-1}IL^{a_1-1}R^{a_2}\cdots
            R^{a_{n-1}}L^{a_n-1}\, , & \text{if $n$ is odd}
        \end{cases}
    \]
\end{theorem}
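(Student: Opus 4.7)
The plan is to combine Lemma~\ref{lemma:interiorLI}, which expresses $\phi_\omega\triple{1}{1}{2}$ as a product of $L$'s and $I$'s applied to $\triple{1}{1}{2}$, with Lemma~\ref{lemma:boundaryLR}, which encodes the $\{L,R\}$-path along a boundary Farey tree, and to glue the two halves via a single application of the symbol $I$.

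For part~(i) I would apply Lemma~\ref{lemma:interiorLI} directly to the word $\omega=1^{\alpha_0}0\,1^{\alpha_1}0\cdots 1^{\alpha_{k-1}}0\,1^{\alpha_k-1}$: each letter $1$ of $\omega$ contributes a factor $L$ and each letter $0$ contributes a factor $I$ in the product $\prod_i W_i$, which yields exactly the expression claimed in~(i).

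For part~(ii) the starting observation is that, by the definition of the symbol $I$,
\[
\phi_\omega\triple{1}{1}{2}\,I \;=\; \phi_\omega\triple{2}{1}{3} \;=\; \phi_\omega\phi_0\triple{1}{1}{2}.
\]
This point lies on $\phi_\omega(\ell)$, since $\ell=\phi_0(\Sigma\setminus\{(0,0),(1,1)\})$, and it is the image under $\phi_\omega\phi_0$ of the midpoint of $\Sigma$. On the other hand, by the Remark following Lemma~\ref{lemma:key}, the target pair can be written as $\triple{p}{r}{q}=\phi_\omega\phi_0(\xi,\xi)$, where $\xi=[a_1,\ldots,a_n]$ is the second component of the representation. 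It therefore suffices to show that on $\phi_\omega(\ell)$ the parent-mediant path from $\phi_\omega\phi_0\triple{1}{1}{2}$ to $\phi_\omega\phi_0(\xi,\xi)$ is given by $L^{a_1-1}R^{a_2}\cdots L^{a_{n-1}}R^{a_n-1}$, respectively $L^{a_1-1}R^{a_2}\cdots R^{a_{n-1}}L^{a_n-1}$, according to the parity of $n$. For this I would invoke the commutation of $\phi_0$ and $\phi_1$ (and hence of $\phi_\omega\phi_0$) with the mediant of rational pairs having equal denominators, as recalled just before Lemma~\ref{lemma:delta_0k}. This commutation transports the Farey tree on $\Sigma$, rooted at $\triple{1}{1}{2}$ and with node $a/b\in\Q\cap(0,1)$ represented by $(a/b,a/b)$, to the Triangular sub-tree on $\phi_\omega(\ell)$, rooted at $\phi_\omega\phi_0\triple{1}{1}{2}$ and with the analogous node represented by $\phi_\omega\phi_0(a/b,a/b)$. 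Lemma~\ref{lemma:boundaryLR} applied to $\Sigma$ then produces the required sequence of $L$'s and $R$'s on $\Sigma$, which transfers verbatim to $\phi_\omega(\ell)$ under the pushforward. Composing with~(i) yields the final displayed formula.

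The main technical obstacle is the verification that the lexicographic orientations used to define $L$ and $R$ on $\phi_\omega(\ell)$ (cf.\ Definition~\ref{def:actionsLR-2d}) agree, under $\phi_\omega\phi_0$, with the orientation on $\Sigma$ exploited in Lemma~\ref{lemma:boundaryLR}, so that no swap of $L$ and $R$ occurs along the transported path. This reduces to a monotonicity check for the linear-fractional maps $\phi_0$ and $\phi_1$ along line segments, already implicit in the arguments of Proposition~\ref{prop:nonconvergent}.
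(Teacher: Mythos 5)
Your proposal is correct and takes essentially the same approach as the paper: part (i) is read off from Lemma~\ref{lemma:interiorLI}, and part (ii) pushes the Farey $\{L,R\}$-path of Lemma~\ref{lemma:boundaryLR} onto $\phi_\omega(\ell)$ using the commutation of $\phi_0,\phi_1$ with the mediant (which preserves the parent--child relation), starting from $\phi_\omega\triple{1}{1}{2}I$, the mediant of the endpoints of $\phi_\omega(\ell)$. The only cosmetic difference is that you transport the Farey structure on $\Sigma$ via $\phi_\omega\phi_0$, whereas the paper transports the one on $\Upsilon$ via $\phi_{\omega 1}$; these coincide pointwise since $\phi_0(x,x)=\phi_1(1,x)$, and the orientation issue you flag is already settled by Definition~\ref{def:actionsLR-2d}, which defines left/right on $\phi_\omega(\ell)$ precisely as the pushforward of the order on $\Sigma$.
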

\begin{proof}
    (i) It is a straightforward consequence of Lemma~\ref{lemma:interiorLI}.\\[0.2cm]
    (ii) Let $\frac ab$ the unique rational number associated to
    $(\frac pq,\frac rq)$ according to Lemma~\ref{lemma:key}-(ii), so
    that by definition $[a_1,\ldots,a_n]$ is its continued fraction
    expansion. Thus $\singx{1}{a}{b}$ can be reached from
    $\singx{1}{1}{2}$ with the sequence
    $L^{a_1-1}R^{a_2}\cdots L^{a_{n-1}}R^{a_n-1}$ if $n$ is even or
    with $L^{a_1-1}R^{a_2}\cdots R^{a_{n-1}}L^{a_n-1}$ if $n$ is odd,
    from Lemma~\ref{lemma:boundaryLR}. Since the maps $\phi_0$ and
    $\phi_1$ commute with the mediant operation, it is easy to prove
    that also the parent-child relationship is preserved. Thus by the
    above sequence of $L$ and $R$ moves one obtains
    $(\frac pq,\frac rq)$ from
    $\phi_{\omega 1}\singx{1}{1}{2} = \phi_\omega \triple{2}{1}{3}
    =\phi_\omega\triple{1}{1}{2} I$, which is the mediant between the
    endpoints of $\phi_\omega(\ell)$.
\end{proof}

To each interior rational pair we have thus associated a finite word
over the alphabet $\{L,R,I\}$, with the geometric meaning of telling
how to move on the Triangular tree to reach the rational pair under
consideration starting from the root $\triple{1}{1}{2}$. In
particular, this finite word is the concatenation between a word over
$\{L,I\}$ ending necessarily with the symbol $I$ and a word over
$\{L,R\}$: the first word gives the path to reach
$\phi_\omega\triple{1}{1}{2} I$, which is the rational point on
$\phi_\omega(\ell)$ appearing on the level of the tree with smallest
index and also the mediant between its endpoints; and the second word
gives the moves along the line segment $\phi_\omega(\ell)$ to reach
the given point from the mediant of its endpoints.

\vskip 0.2cm To continue the analogy with the one-dimensional case, we
now convert the actions of the symbols $L$, $R$, and $I$, into the
actions by right multiplication of three $SL(3,\Z)$ matrices, using a
matrix representation for the rational pairs in $\trianglecl$. Towards
this aim we recall the correspondence between rational pairs and
three-dimensional vectors we have introduced in Section
\ref{sec:triangle-seq}.

\begin{definition}\label{def:matrix}
    Let $(\frac pq,\frac rq)$ be a rational pair in
    $\trianglecl\setminus\mathcal{T}_{-1}$. We associate to
    $(\frac pq,\frac rq)$ the $3\times3$ matrix $\mattt{p}{r}{q}$ defined
    as follows. The first two columns are respectively the right and
    the left parents of $(\frac pq,\frac rq)$, expressed as
    three-dimensional vectors. The third column depends on the
    location of the point:
    \begin{enumerate}
      \item if the pair is a boundary point, the third column is the
        vertex of $\trianglecl$ which is opposite to the side containing
        the pair;
      \item if $(\frac pq,\frac rq)\in \phi_\omega(\ell)$ for
        $\omega\in\{0,1\}^*$, the third column is the vertex ``2'' of
        $\triangle_{\omega}$, that is $\phi_{\omega}(1,1)$.
    \end{enumerate}
    The three-dimensional vector associated to $(\frac pq,\frac rq)$
     is obtained as the sum of the first
    two columns of its matrix.
\end{definition}

Since they will take on great importance, we explicitly show the
matrices representing the midpoints of the three sides of $\triangle$:
    \[
        \mattt{1}{1}{2} =
    \begin{pmatrix}
        1&1&1\\1&0&1\\1&0&0
    \end{pmatrix},
    \quad
    \mattty{1}{2}{0} =
    \begin{pmatrix}
        1&1&1\\1&0&1\\0&0&1
    \end{pmatrix},
    \quad
    \matttx{1}{1}{2} =
    \begin{pmatrix}
        1&1&1\\1&1&0\\1&0&0
    \end{pmatrix}.
\]
Let us now define
\begin{equation} \label{le-tre-matrici}
    L\coloneqq\begin{pmatrix} 1&0&0\\1&1&0\\0&0&1\end{pmatrix},\quad
    R\coloneqq \begin{pmatrix} 1&1&0\\0&1&0\\0&0&1\end{pmatrix},\quad
    I\coloneqq\begin{pmatrix} 1&0&1\\1&0&0\\0&1&0\end{pmatrix}.
\end{equation}
Note that all the above matrices are in $SL(3,\Z)$ and that $L$ and
$R$ extend their $2\times 2$ counterpart defined in
\eqref{le-due-matrici}. A straightforward computation shows that the
action of each of the above matrices by right multiplication on the
matrix representing a rational pair yields the matrix of the child
obtained according to the move bearing the same name of the matrix. As
a direct consequence of Lemma~\ref{lemma:boundaryLR} and
Theorem~\ref{prop:interiorLRI}, we thus have the following properties.
\begin{enumerate}[label={\upshape(\arabic*)},wide = 0pt,labelsep=0.2cm,leftmargin=*]
  \item Let $\frac ab \in \Q\cap (0,1)$ and let $[a_1,\,\ldots,\,a_n]$
    be its continued fraction expansion. Then
    \[
        \mattt{a}{a}{b} =
        \begin{cases}
            \mattt{1}{1}{2} L^{a_1-1}R^{a_2}\cdots
            L^{a_{n-1}}R^{a_n-1}\, , & \text{if $n$ is even}\\[0.1cm]
            \mattt{1}{1}{2} L^{a_1-1}R^{a_2}\cdots
            R^{a_{n-1}}L^{a_n-1}\, , & \text{if $n$ is odd}
        \end{cases}
    \]
    The same right action yields $\mattty{a}{b}{0}$ from
    $\mattty{1}{2}{0}$ and $\matttx{1}{a}{b}$ from $\matttx{1}{1}{2}$.
  \item Let $(\frac pq,\frac rq)$ be the interior rational pair with
    representation
    $([\alpha_0,\ldots,\alpha_k],[a_1,\ldots,a_n])$. Then
    \[
        \mattt{p}{r}{q} =
        \begin{cases}
            \mattt{1}{1}{2} L^{\alpha_0}I\cdots
            L^{\alpha_{k-1}}IL^{\alpha_k-1}IL^{a_1-1}R^{a_2}\cdots
            L^{a_{n-1}}R^{a_n-1}\, , & \text{if $n$ is even}\\[0.1cm]
            \mattt{1}{1}{2} L^{\alpha_0}I\cdots
            L^{\alpha_{k-1}}IL^{\alpha_k-1}IL^{a_1-1}R^{a_2}\cdots
            R^{a_{n-1}}L^{a_n-1}\, , & \text{if $n$ is odd}
        \end{cases}
    \]
\end{enumerate}

\begin{example}\label{ex:pathLRI}
    Consider the point in with representation
    $([2,0,1,1],[2,2])$. Using
    $\ell=\phi_1\phi_0\phi_2(\Sigma\setminus\set{(0,0),(1,1)})$ and
    \eqref{eq:backSigma}, our point is located along the open line
    segment $\phi_{\omega}(\ell)$ with $\omega=110010$, whose left and
    right endpoints can be readily computed and are respectively:
    \[
        \phi_{110010}(1,0) = \triple{3}{2}{8}
        \quad\text{and}\quad
        \phi_{110010}\triple{1}{1}{2} = \triple{5}{4}{15}.
    \]
    By Theorem~\ref{prop:interiorLRI} we have that the path along
    the Triangular tree to reach this point from the root
    $\triple{1}{1}{2}$ is
    \[
        L^2IILIILR,
    \]
    which is the concatenation of the words $L^2IILII$ and $LR$. The
    word $L^2IILII$ brings $\triple{1}{1}{2}$ first to
    $\triple{1}{1}{2}L^2IILI = \triple{5}{4}{15}$, the right endpoint
    of $\phi_{110010}(\ell)$, and then to
    $\triple{1}{1}{2}L^2IILII = \triple{5}{4}{15}I =
    \triple{8}{6}{23}$, the mediant between the endpoints of
    $\phi_{110010}(\ell)$. The word $LR$ shows how to move along
    $\phi_{110010}(\ell)$ starting from $\triple{8}{6}{23}$ to obtain
    our point:
    \[
        \triple{8}{6}{23}LR =
        \triple{11}{8}{31}R = \triple{19}{14}{54}.
    \]
    Figure~\ref{fig:example} shows the the path in the triangle to
    reach $\phi_{110010}(\ell)$ and the moves along this line segment
    to get to our point. The corresponding matrix multiplication
    yields
    \[
        \mattt{1}{1}{2}L^2IILIILR =
        \begin{pmatrix}
            23&31&11\\8&11&4\\6&8&3
        \end{pmatrix},
    \]
    which is by definition the matrix representing
    $\triple{19}{14}{54}$.
\end{example}

\begin{figure}[h]
    \begin{tikzpicture}[scale=12,spy using outlines={circle, magnification=2.3, connect spies}]

    \def\o{-0.0005}
    \def\v{0.002}
	
	\def\x{3/5}
	\def\y{1}
	
    \draw[very thick] (0,0) -- (1,0) -- (1/2,1/2) -- cycle;

    \node[above,yshift=5pt] at (1/2,1/2) {$\left(\frac 12,\frac 12\right)$};
    \node[below] at (0,0) {$(0,0)$};
    \node[below] at (1,0) {$(1,0)$};

    \node[above,yshift=0pt,xshift=-10pt] at (1/3,1/3) {$\left(\frac 13,\frac 13\right)$};
    \node[above,yshift=-5pt,xshift=-15pt] at (1/4,1/4) {$\left(\frac 14,\frac 14\right)$};
    \node[below] at (2/5,1/5-\v) {\tiny $\left(\frac 25,\frac 15\right)$};
    \node[blue,right,yshift=-1pt] at (3/8,2/8) {\tiny $\left(\frac 38,\frac 28\right)$};
    \node[right,yshift=6pt] at (4/11,3/11) {\tiny $\left(\frac 4{11},\frac 3{11}\right)$};
    \node[blue,above,xshift=-14pt] at (5/15,4/15) {\tiny $\left(\frac 5{15},\frac 4{15}\right)$};

    \draw (1,0) -- (1/2,1/2);
    \draw (1,0) -- (1/3,1/3);
    \draw (1,0) -- (1/4,1/4);
    \draw (1/3,1/3) -- (2/5,1/5);
    \draw (3/8,2/8) -- (1/4,1/4);
    \draw (4/11,3/11) -- (1/4,1/4);
    \draw[blue,thick] (5/15,4/15) -- (3/8,2/8);

    \node[gray] at (1/2-\o,1/2-\v) {\large $\bullet$};
    \node[gray] at (1/3-\o,1/3-\v) {\large $\bullet$};
    \node[gray] at (1/4-\o,1/4-\v) {\large $\bullet$};

    \node[gray] at  (2/5,1/5) {\large $\bullet$};
    \node[blue] at (3/8,2/8) {\large $\bullet$};
    \node[gray] at (4/11,3/11) {\large $\bullet$};
    \node[blue] at (5/15,4/15) {\large $\bullet$};

    \coordinate (spypoint) at (5/15,4/15);
    \coordinate (magnifyglass) at (1/5,-1/3);
    \spy [gray, size=7cm] on (spypoint) in node[fill=white] at (magnifyglass);
	
	\draw[blue,thick] (\x,-1/3) node{\large $\bullet$} -- (\y,-1/3) node{\large $\bullet$};
	
	\node[blue,below] at (\x,-1/3) {$\left(\frac 38,\frac 28\right)$};
	\node[blue,below] at (\y,-1/3) {$\left(\frac 5{15},\frac 4{15}\right)$};
	
	\node[below,yshift=-10pt] (A) at  ({(\x + \y)/2},-1/3) {$\left(\frac 8{23},\frac {6}{23}\right)$};
	\node[below,yshift=-30pt] at ({(\y+2*\x)/3},-1/3) {$\left(\frac {11}{31},\frac {8}{31}\right)$};
	\node[above,yshift=20pt] at ({(2*\y+3*\x)/5},-1/3) {$\left(\frac {19}{54},\frac {14}{54}\right)$};

	\draw[densely dotted] ({(\x + \y)/2},-1/3) -- ({(\x + \y)/2},-1/3-10pt);
	\draw[densely dotted] ({(\y+2*\x)/3},-1/3) -- ({(\y+2*\x)/3},-1/3-1/12);
	\draw[densely dotted] ({(2*\y+3*\x)/5},-1/3) -- ({(2*\y+3*\x)/5},-1/3+1/18);
	
	\node[gray] at  ({(\x + \y)/2},-1/3) {\large $\bullet$};
	\node[gray] at  ({(\y+2*\x)/3},-1/3) {\large $\bullet$};
	\node[gray] at  ({(2*\y+3*\x)/5},-1/3) {\large $\bullet$};
	
	\node[blue,above,yshift=3pt] at (\y,-1/3) {$\phi_{110010}(\ell)$};
\end{tikzpicture}
    \caption{Path to reach the right endpoint $\triple{5}{4}{15}$ of
      the line segment $\phi_{110010}(\ell)$ with the moves
      $L^2IILI$. The line segment $\phi_{110010}(\ell)$ is enhanced in
      blue and the remaining moves $ILR$ are represented in the
      bottom-right figure.}\label{fig:example}
\end{figure}
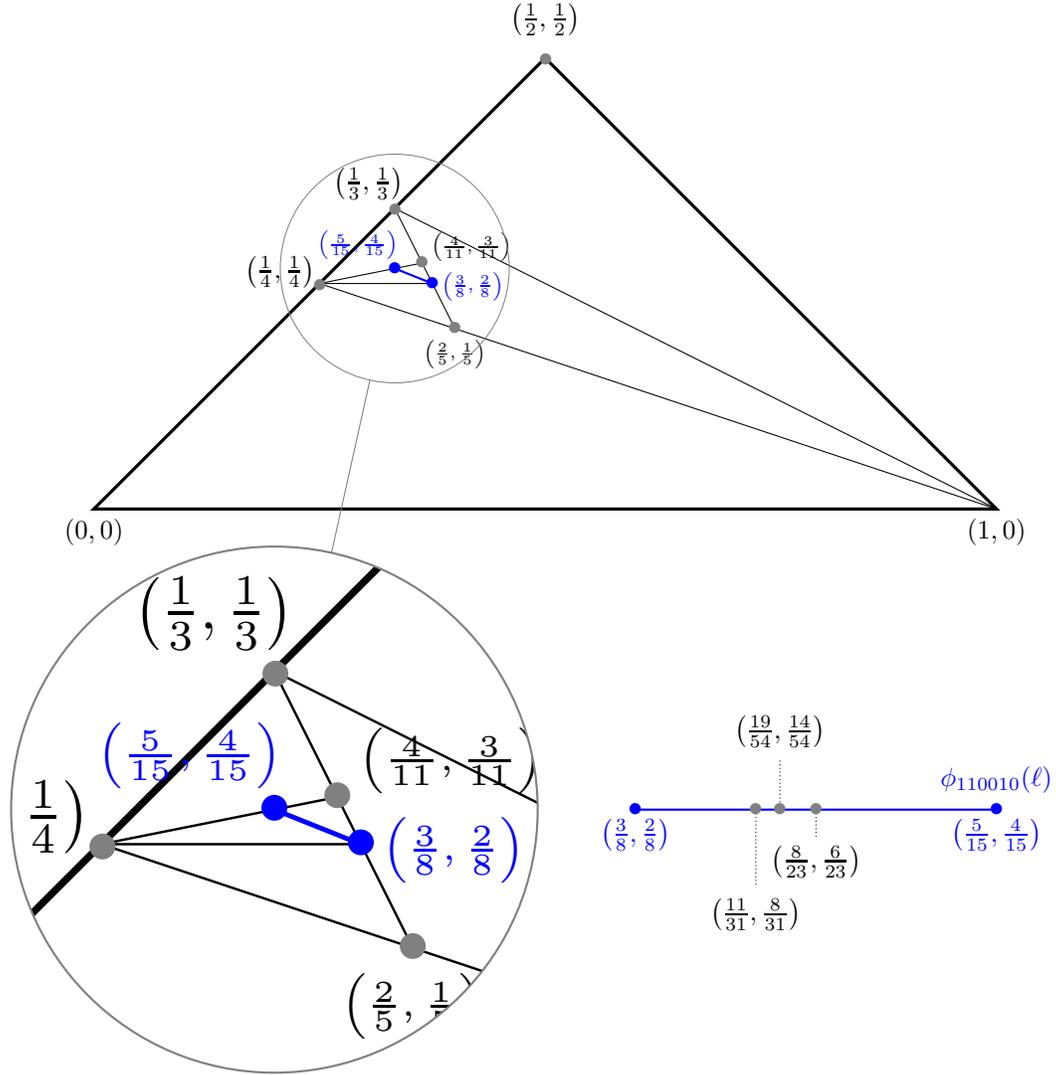

\vskip 0.2cm The last step to complete the similarity with the
one-dimensional case is a way to use the coding we have introduced to
express a rational pair as a counterimage of $(\frac 12,\frac 12)$ and
vice versa. The key observation is that using the local inverses
$\phi_0$, $\phi_1$, and $\phi_2$, it is possible to mimic the
behaviour of the one-dimensional Farey map along $\Sigma$ in terms of
the inverses $\psi_0$ and $\psi_1$ defined in
\eqref{farey-inversi}. In fact a straightforward computation shows
that for $0\leq x\leq 1$ it holds
\begin{equation}\label{farey-diag}
  \phi_1(x,x) = (\psi_0(x),\psi_0(x))
  \quad\text{and}\quad
  \phi_0^2\phi_2(x,x)=(\psi_1(x),\psi_1(x)).
\end{equation}

\begin{proposition}\label{prop:backimage}
    \begin{enumerate}
      \item Let $\frac ab$ be a rational number with continued
        fraction expansion $[a_1,\,\ldots,\,a_n]$. Then
        \begin{align*}
            \triple{a}{a}{b} &=\phi_1^{a_1-1}\phi_0^2\phi_2 \cdots
            \phi_1^{a_{n-1}-1}\phi_0^2\phi_2
            \phi_1^{a_n-2}\triple{1}{1}{2},\\
            \singy{a}{b}{0} &= \phi_2\circ
            \phi_1^{a_1-1}\phi_0^2\phi_2 \cdots
            \phi_1^{a_{n-1}-1}\phi_0^2\phi_2
            \phi_1^{a_n-2}\triple{1}{1}{2},\\
            \singx{1}{a}{b} &= \phi_0\circ \phi_2\circ
            \phi_1^{a_1-1}\phi_0^2\phi_2
            \cdots\phi_1^{a_{n-1}-1}\phi_0^2\phi_2
            \phi_1^{a_n-2}\triple{1}{1}{2}.
        \end{align*}
      \item Let $(\frac pq,\frac rq)$ be the interior rational pair with
        representation
        $([\alpha_0,\ldots,\alpha_k],[a_1,\ldots,a_n])$.
        Then
        \[
            \triple{p}{r}{q}=\phi_1^{\alpha_0}\phi_0 \cdots
            \phi_1^{\alpha_k}\phi_0\circ\phi_2\circ\phi_1^{a_1-1}\phi_0^2\phi_2
            \cdots\phi_1^{a_{n-1}-1}\phi_0^2\phi_2
            \phi_1^{a_n-2}\triple{1}{1}{2}.
        \]
    \end{enumerate}
\end{proposition}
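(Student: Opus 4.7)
The plan is to prove (i) as a direct consequence of identity \eqref{farey-diag}, which shows that the two-dimensional maps $\phi_1$ and $\phi_0^2\phi_2$ act on the diagonal $\Sigma$ exactly as the one-dimensional inverse Farey branches $\psi_0$ and $\psi_1$, respectively. Part (ii) will then follow by combining Lemma \ref{lemma:key}(ii) with (i).

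For (i), first I would recall from Section \ref{sec:farey-coding} that if $\frac a b=[a_1,\ldots,a_n]$, then $\frac a b=\psi_0^{a_1-1}\psi_1\cdots\psi_0^{a_n-1}\psi_1(0)$, and using $\psi_0\psi_1(0)=\tfrac12$ rewrite this as $\frac a b=\psi_0^{a_1-1}\psi_1\cdots\psi_0^{a_{n-1}-1}\psi_1\psi_0^{a_n-2}\bigl(\tfrac12\bigr)$. This is well-defined under the standing convention $a_n>1$ for $n>1$ (Remark \ref{remark:ell}), and for $n=1$ under $a_1\ge 2$ (which is forced by $\frac a b\in (0,1)$). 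Starting from $\triple{1}{1}{2}=\bigl(\tfrac12,\tfrac12\bigr)\in\Sigma$, I would then iterate \eqref{farey-diag}: applying $\phi_1$ to a diagonal point $(x,x)$ produces $(\psi_0(x),\psi_0(x))$ and applying $\phi_0^2\phi_2$ produces $(\psi_1(x),\psi_1(x))$, so the orbit never leaves $\Sigma$ and the two-dimensional composition is in lock-step with the one-dimensional one. This yields $\phi_1^{a_1-1}\phi_0^2\phi_2\cdots\phi_1^{a_{n-1}-1}\phi_0^2\phi_2\phi_1^{a_n-2}\triple{1}{1}{2}=\bigl(\tfrac a b,\tfrac a b\bigr)=\triple{a}{a}{b}$, which is the first identity. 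The second follows by one further application of $\phi_2$ (which sends $(x,x)\in\Sigma$ to $(x,0)\in\Lambda$) and the third by an additional $\phi_0$ (which sends $(x,0)\in\Lambda$ to $(1,x)\in\Upsilon$); both are immediate from \eqref{def-fi0}--\eqref{def-fi2}.

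For (ii), let $\xi=\frac a b\in (0,1)$ be the unique rational with continued fraction expansion $[a_1,\ldots,a_n]$ attached to the representation of $\bigl(\tfrac p q,\tfrac r q\bigr)$ via Lemma \ref{lemma:key}(ii) and Definition \ref{repres-interior}. Then Lemma \ref{lemma:key}(i) gives $\triple{p}{r}{q}=\phi_1^{\alpha_0}\phi_0\cdots\phi_1^{\alpha_k}\phi_0\phi_2(\xi,\xi)=\phi_1^{\alpha_0}\phi_0\cdots\phi_1^{\alpha_k}\phi_0\phi_2\triple{a}{a}{b}$, and substituting the expression for $\triple{a}{a}{b}$ from (i) produces exactly the claimed formula.

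The whole argument is essentially bookkeeping: the inductive use of \eqref{farey-diag} is immediate, and the chaining of (i) with Lemma \ref{lemma:key} is direct composition. The only subtle point is the edge-case simplification $\psi_0\psi_1(0)=\tfrac12$, which forces the terminal exponent to be $a_n-2$ and requires the normalization fixed in Remark \ref{remark:ell}; once that convention is in place, no genuine difficulty remains.
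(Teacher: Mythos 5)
Your proposal is correct and follows essentially the same route as the paper: part (i) is the translation of the Farey-map identity $\frac ab=\psi_0^{a_1-1}\psi_1\cdots\psi_0^{a_n-2}\bigl(\tfrac12\bigr)$ to the diagonal via \eqref{farey-diag}, and part (ii) is the substitution of (i) into \eqref{eq:backSigma} from Lemma \ref{lemma:key}. Your write-up merely spells out details the paper leaves implicit (the reduction $\psi_0\psi_1(0)=\tfrac12$ and the normalization $a_n>1$ from Remark \ref{remark:ell}), which is fine.
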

\begin{proof}
    (i) From the properties of the Farey map we have
    \[
        \frac ab = \psi_0^{a_1-1}\psi_1\cdots
        \psi_0^{a_n-2}\left(\frac12\right).
    \]
    By using the correspondence between $\psi_0$ and $\phi_1$ and
    between $\psi_1$ and $\phi_0^2\phi_2$ we can conclude for
    $\triple{a}{a}{b}$. The expressions for $\singy{a}{b}{0}$ and
    $\singx{1}{a}{b}$ are now a trivial consequence.\\[0.2cm]
    (ii) Let $\frac ab$ the unique rational number associated to
    $(\frac pq,\frac rq)$ according to Lemma~\ref{lemma:key}-(i), so that
    by definition $[a_1,\ldots,a_n]$ is its continued fraction
    expansion. Equation~\eqref{eq:backSigma} gives
    $(\frac pq,\frac rq)=\phi_1^{\alpha_0}\phi_0 \cdots
    \phi_1^{\alpha_k}\phi_0\phi_2\triple{a}{a}{b}$, so that the
    conclusion holds directly from (i).
\end{proof}

\begin{remark}
    (i) In the statement of the above proposition we have explicitly used
    the composition sign to emphasise the separation between the first
    group of maps, the map $\phi_2$, and the second group, because
    they correspond to the different parts of
    the representation.\\[0.2cm]
    (ii) Note that the map $\phi_2$ separating the two groups of
    maps is an actual separator, meaning that if we exhibit a point as
    a counterimage of $\triple{1}{1}{2}$, it can be unambiguously
    identified. In fact it is the only map $\phi_2$ which is preceded
    by a single occurrence of $\phi_0$.
\end{remark}

Also the definition of rank can be extended to our two-dimensional
setting. We say that $\rank(\frac pq,\frac rq)=m$ if and only if
$x\in \mathcal{T}_{m}\setminus\mathcal{T}_{m-1}$.

\begin{corollary}
    Let $(\frac pq,\frac rq)$ be the rational pair with representation
    $([\alpha_0,\ldots,\alpha_k],[a_1,\ldots,a_n])$. Then
    \[
        \rank\triple{p}{r}{q} =
        \begin{cases}
            \sum_{j=0}^k \alpha_j + \sum_{i=1}^n a_i +k-2\, , &
            \text{if the triangle sequence exists and $\alpha_k>0$}\\[0.1cm]
            \sum_{i=1}^n a_i-2\, , & \text{otherwise}
        \end{cases}
    \]
\end{corollary}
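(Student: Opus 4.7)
The plan is a counting argument: read the coding word attached to $\triple{p}{r}{q}$ as a downward path in the Triangular tree starting at a base point of $\mathcal{T}_0$, and observe that each letter drops the level by exactly one.

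The key step is to verify that each of the three letters $L$, $R$, $I$ sends a pair in $\mathcal{T}_n$ to one of its children in $\mathcal{T}_{n+1}$. For $L$ and $R$ this is immediate from Definition~\ref{def:actionsLR-2d} together with the level-by-level equivalence $\mathcal{T}_n=\mathcal{S}_n\setminus\mathcal{S}_{n-1}$ recalled at the end of Section~\ref{sec:tree}: the mediant of a point of $\mathcal{S}_n$ with one of its two neighbours along the side of $\mathscr{P}_n$ containing it is a new point of $\mathcal{S}_{n+1}$. For $I$ the image $\phi_\omega\triple{1}{1}{2}I=\phi_\omega\triple{2}{1}{3}$ is the mediant of the two endpoints of $\phi_\omega(\ell)$, and by \cite[Lemma~5.8]{cas} the segment $\phi_\omega(\ell)$ is a side of $\mathscr{P}_{|\omega|+1}$ that is newly created when $\mathscr{P}_{|\omega|}$ is refined; hence the mediant of its endpoints sits exactly one level below $\phi_\omega\triple{1}{1}{2}$.

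Once this depth-one property is granted, the rest is bookkeeping. The three base points $\triple{1}{1}{2}$, $\singy{1}{2}{0}$ and $\singx{1}{1}{2}$ all lie in $\mathcal{T}_0$ and hence have rank $0$. For an interior rational pair, Theorem~\ref{prop:interiorLRI} expresses $\triple{p}{r}{q}$ as $\triple{1}{1}{2}$ right-multiplied by a word whose prefix $L^{\alpha_0}IL^{\alpha_1}I\cdots L^{\alpha_{k-1}}IL^{\alpha_k-1}I$ has $\sum_{j=0}^{k-1}(\alpha_j+1)+\alpha_k=\sum_{j=0}^k\alpha_j+k$ letters, and whose tail $W$ has length $\sum_{i=1}^n a_i-2$ (by Lemma~\ref{lemma:boundaryLR} and its remark, valid also for $n=1$). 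Summing the two contributions and adding to the rank $0$ of the base point produces the first formula. For a pair on $\partial\triangle$, Lemma~\ref{lemma:boundaryLR} and its direct analogues on $\Lambda$ and $\Upsilon$ give a coding word using only $L$ and $R$, of total length $\sum_{i=1}^n a_i-2$, which is therefore the rank.

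The only real obstacle is the depth-one claim for the new letter $I$, and this is more a restatement of the description of the sides of $\mathscr{P}_n$ in \cite[Lemma~5.8]{cas} than an actual calculation. A small additional care is needed for the boundary pairs of the special form $(1/a_1,1/a_1)$, whose representation $([a_1-1],[a_1])$ formally satisfies $\alpha_k>0$: such a pair is to be treated as boundary and is covered by the second formula via Lemma~\ref{lemma:boundaryLR}.
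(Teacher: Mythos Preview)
Your argument is correct and takes a genuinely different route from the paper's. The paper counts levels via the backward-image representation of Proposition~\ref{prop:backimage}: it decomposes the expression
\[
\phi_1^{\alpha_0}\phi_0\cdots\phi_1^{\alpha_k}\phi_0\circ\phi_2\circ\phi_1^{a_1-1}\phi_0^2\phi_2\cdots\phi_1^{a_n-2}\triple{1}{1}{2}
\]
into three blocks and checks, using the rules (R1)--(R4) that define the tree by counterimages, how each occurrence of $\phi_0$, $\phi_1$, and the block $\phi_0^2\phi_2$ changes the level. You instead work on the coding side: you use Theorem~\ref{prop:interiorLRI} and Lemma~\ref{lemma:boundaryLR} and the geometric (mediant) description $\mathcal{T}_n=\mathcal{S}_n\setminus\mathcal{S}_{n-1}$ to show that each of the symbols $L$, $R$, $I$ moves exactly one level down, so the rank equals the length of the coding word. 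Both are counting arguments, but over different alphabets; yours is perhaps more transparent once Theorem~\ref{prop:interiorLRI} is in hand, while the paper's ties directly back to the original rules (R1)--(R4).

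Two small comments. First, your justification of the depth-one property for $I$ is a bit terse: the crucial point is that the open segment $\phi_\omega(\ell)$ lies in the interior of $\triangle_\omega$, so that no point of $\mathcal{S}_{|\omega|}$ other than the two endpoints can lie on $\overline{\phi_\omega(\ell)}$; this is what forces the mediant of the endpoints to appear first at level $|\omega|+1$. It is implicit in your appeal to \cite[Lemma~5.8]{cas}, but worth spelling out. Second, you correctly flag the special boundary pairs $(\tfrac{1}{a_1},\tfrac{1}{a_1})$, whose representation $([a_1-1],[a_1])$ formally has $\alpha_k>0$; the paper's proof handles them the same way you do, by observing that the triangle-sequence block contributes to the rank only for genuinely interior pairs (see item (3) of the paper's proof).
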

\begin{proof}
    By definition, $\rank\triple{1}{1}{2}=0$. Consider the expression
    of a rational pair as a backward image of $\triple{1}{1}{2}$ given
    in Proposition~\ref{prop:backimage} and look separately to its (at
    most) three blocks of compositions, starting from the right to the
    left.
    \begin{enumerate}[label={(\arabic*)},wide=0pt,labelsep=0.2cm,leftmargin=*]
      \item In the first group of maps, the one corresponding to the
        continued fraction expansion, each application of $\phi_1$ and
        each block $\phi_0^2\phi_2$ increases the rank by 1
        ($\phi_0^2\phi_2$ does not increase the rank more because it
        is applied to points on $\Sigma$). Thus the rank of
        $\phi_1^{a_1-1}\phi_0^2\phi_2
        \cdots\phi_1^{a_{n-1}-1}\phi_0^2\phi_2
        \phi_1^{a_n-2}\triple{1}{1}{2}$ is $\sum_{i=1}^n a_i-2$.
      \item The application of the separator $\phi_2$ appears if and
        only if the point is not along $\Sigma$ and anyway does not
        change the level.
      \item The third block appears if and only if the point is along
        $\Upsilon$ or the point is interior. Moreover, when the third
        block appears, its right end is exactly one application of
        the map $\phi_0$, but since it acts on a point on $\Lambda$, it
        does not change the level. Each of the subsequent maps increases
        the level by $1$, for a total increase of
        $\sum_{j=0}^k \alpha_j+k$. Thus note that the block related to
        the triangle sequence contributes to the rank if and only if
        the point is an interior rational pair.
    \end{enumerate}
\end{proof}

\begin{example}
    As in Example~\ref{ex:pathLRI}, consider the point
    $\triple{19}{14}{54}$ whose representation is
    $([2,0,1,1],[2,2])$. By Proposition~\ref{prop:backimage}, we have
    \[
        \triple{19}{14}{54} = \phi_1^2\phi_0\phi_0\phi_1\phi_0\phi_1\phi_0\circ \phi_2\circ
        \phi_1\phi_0^2\phi_2\triple{1}{1}{2} =
        \phi_1^2\phi_0\phi_0\phi_1\phi_0\phi_1\phi_0\circ
        \phi_2\triple{2}{2}{5},
    \]
    where we have emphasised that our point is a backward image of
    $\triple{2}{2}{5}$, which corresponds to the second component of
    the representation since $\frac 25=[2,2]$.
\end{example}


\section{Approximations for non-rational pairs} \label{sec:approx}

In this section we use the structure of the Triangular tree to define
approximations of real pairs in $\trianglecl$ by rational pairs. This
is particularly important for non-rational pairs, which have an
infinite number of rational approximations.

Non-rational pairs have either an infinite triangle sequence or a
finite triangle sequence with infinite associated continued fraction
expansion. An analogous of Proposition~\ref{prop:backimage} holds also
for non-rational pairs, apart from the non-convergent infinite case,
for which Proposition~\ref{prop:nonconvergent} basically implies that
such a result is not possible.

\subsection{Finite triangle sequence} Let us first consider the case
of real pairs $(\alpha,\beta)$ with representation of the form
$([\alpha_0,\,\ldots,\,\alpha_k],[a_1,\,a_2,\,\ldots])$ (the second
component being finite or infinite). By extending the ideas of
Section~\ref{sec:rationals}, we associate to a non-rational pair
$(\alpha,\beta)$ the infinite word $\mathcal{W}$ over the alphabet
$\{L,R,I\}$ defined to be
\[
    \mathcal{W}(\alpha,\beta) = L^{\alpha_0}I\cdots
    L^{\alpha_{k-1}}IL^{\alpha_k-1}IL^{a_1-1}R^{a_2}L^{a_3}\cdots.
\]
Moreover we define the \emph{approximations} of $(\alpha,\beta)$ to be
the rational pairs $(\frac{m_j}{s_j},\frac{n_j}{s_j})$ with coding
$\mathcal{W}_{(j)}$, the prefix of $\mathcal{W}$ of length $j$.

\begin{example}
    We consider the rational pair $\triple{19}{14}{54}$ with
    representation $([2,0,1,1],[2,2])$. As shown in
    Example~\ref{ex:pathLRI} the word associated to this point is
    $\mathcal{W} = LLIILIILR$. Thus its approximations are the
    following:
    \[
        \begin{array}{cll}
            \triple{1}{1}{2}    \quad &\mathcal{W}_{(0)} = \eps      \quad &([1],[2])\\[0.2cm]
            \triple{1}{1}{3}    \quad &\mathcal{W}_{(1)} = L         \quad &([2],[3])\\[0.2cm]
            \triple{1}{1}{4}    \quad &\mathcal{W}_{(2)} = LL        \quad &([3],[4])\\[0.2cm]
            \triple{2}{1}{5}    \quad &\mathcal{W}_{(3)} = LLI       \quad &([3],[2])\\[0.2cm]
            \triple{3}{2}{8}    \quad &\mathcal{W}_{(4)} = LLII      \quad &([2,1],[2])\\[0.2cm]
            \triple{4}{3}{11}   \quad &\mathcal{W}_{(5)} = LLIIL     \quad &([2,1],[2])\\[0.2cm]
            \triple{5}{4}{15}   \quad &\mathcal{W}_{(6)} = LLIILI    \quad &([2,0,2],[2])\\[0.2cm]
            \triple{8}{6}{23}   \quad &\mathcal{W}_{(7)} = LLIILII   \quad &([2,0,1,1],[2])\\[0.2cm]
            \triple{11}{8}{31}  \quad &\mathcal{W}_{(8)} = LLIILIIL  \quad &([2,0,1,1],[3])\\[0.2cm]
            \triple{19}{14}{54} \quad &\mathcal{W}_{(9)} = LLIILIILR \quad &([2,0,1,1],[2,2])\\[0.2cm]
        \end{array}
    \]
\end{example}

\begin{example}
    We now consider the non-rational pair
    $(\alpha,\beta) = \singy{1}{2}{\sqrt{2}-1}$, with triangle
    sequence $[1,1]$. Since
    $T^2(\alpha,\beta)=\left(\frac{\sqrt{2}-1}2,0\right)$ and
    $\frac{\sqrt{2}-1}2 = [\widebar{4,1}]$, we have that the
    representation of $(\alpha,\beta)$ is
    $\left([1,1],[\widebar{4,1}]\right)$ and thus the infinite word
    associated to $(\alpha,\beta)$ is
    \[
        \mathcal{W} =LIIL^3\widebar{RL^4}.
    \]
    Thus its first ten approximations are the following:
    \[
        \begin{array}{cll}
            \triple{1}{1}{2}    \quad &\mathcal{W}_{(0)} = \eps      \quad &([1],[2])\\[0.2cm]
            \triple{1}{1}{3}    \quad &\mathcal{W}_{(1)} = L         \quad &([2],[3])\\[0.2cm]
            \triple{2}{1}{4}    \quad &\mathcal{W}_{(2)} = LI        \quad &([2],[2])\\[0.2cm]
            \triple{3}{2}{6}    \quad &\mathcal{W}_{(3)} = LII       \quad &([1,1],[2])\\[0.2cm]
            \triple{4}{3}{8}    \quad &\mathcal{W}_{(4)} = LIIL      \quad &([1,1],[3])\\[0.2cm]
            \triple{5}{4}{10}   \quad &\mathcal{W}_{(5)} = LIILL     \quad &([1,1],[4])\\[0.2cm]
            \triple{6}{5}{12}   \quad &\mathcal{W}_{(6)} = LIILLL    \quad &([1,1],[5])\\[0.2cm]
            \triple{11}{9}{22}  \quad &\mathcal{W}_{(7)} = LIILLLR   \quad &([1,1],[4,2])\\[0.2cm]
            \triple{17}{14}{34} \quad &\mathcal{W}_{(8)} = LIILLLRL  \quad &([1,1],[4,1,2])\\[0.2cm]
            \triple{23}{19}{46} \quad &\mathcal{W}_{(9)} = LIILLLRLL \quad &([1,1],[4,1,3])\\[0.2cm]
            \vdots&&
        \end{array}
    \]
\end{example}

Since the triangle sequence is finite, if $(\alpha,\beta)$ is a
non-rational pair, after a finite transient the approximations have
the same triangle sequence as $(\alpha,\beta)$, and they are simply
given by truncating the continued fraction expansion in the second
component of the representation of $(\alpha,\beta)$. We are using the
fact that
\[
           \lim_{n\rightarrow \infty} \phi_1^{\alpha_0}\phi_0 \cdots
            \phi_1^{\alpha_k}\phi_0\circ\phi_2\circ\phi_1^{a_1-1}\phi_0^2\phi_2
            \cdots\phi_1^{a_{n-1}-1}\phi_0^2\phi_2
            \phi_1^{a_n-2}\triple{1}{1}{2} = (\alpha,\beta)\, ,
\]
which follows by Proposition \ref{prop:backimage} and the properties of the continued fraction expansion translated into our framework with \eqref{farey-diag}.

By learning from the previous examples we can give a general way of
constructing the coding associated to the approximations of a real
pair $(\alpha,\beta)$ with representation
$([\alpha_0,\,\ldots,\,\alpha_k],[a_1,\,a_2,\,\ldots])$. The
correspondence between the words and the representation of the related
approximation can be recovered by the following properties, which
easily follows from Theorem~\ref{prop:interiorLRI} and Proposition
\ref{prop:backimage}.
\begin{enumerate}[label={\upshape(\arabic*)},wide=0pt,labelsep=0.2cm,leftmargin=*]
  \item For $j$ a non-negative integer, the word $L^j$ corresponds to
    the representation $([j+1],[j+2])$.
  \item For $j$, $r$, and $d_0,\,\ldots,\,d_r$ non-negative integers,
    the word $L^{d_0}I\cdots L^{d_r}IL^j$ corresponds to the
    representation $([d_0,\,\ldots,\,d_{r-1},d_r+1],[j+2])$.
  \item For $r$ and $s$ non-negative integers, and for
    $d_0,\ldots,d_r$, and $c_0,\,\ldots,\,c_s$ non-negative integers,
    the word $L^{d_0}I\cdots L^{d_r}IL^{c_0}R^{c_1}\cdots L^{c_s}$ if
    $s$ is even (or the word
    $L^{d_0}I\cdots L^{d_r}IL^{c_0}R^{c_1}\cdots R^{c_s}$ if $s$ is
    odd) corresponds to the representation
    $([d_0,\,\ldots,\,d_{r-1},d_r+1],[c_0+1,\,c_1,\,\ldots,\,c_{s-1},\,c_s+1])$.
\end{enumerate}

\subsection{Convergent infinite triangle sequence}
If the real pair $(\alpha,\beta)$ has a convergent infinite triangle
sequence $[\alpha_0,\,\alpha_1,\,\alpha_2,\,\ldots]$, its
representation is $([\alpha_0,\,\alpha_1,\,\alpha_2\,\ldots],[2])$ and
\eqref{converg-ok} holds. Thus we associate to $(\alpha,\beta)$ the
infinite word $\mathcal{W}$ over the alphabet $\{L,R,I\}$ defined by
\[
\mathcal{W}(\alpha,\beta) =  L^{\alpha_0}IL^{\alpha_1}IL^{\alpha_2}I\cdots\, ,
\]
and the \emph{approximations} of $(\alpha,\beta)$ are again the
rational pairs $(\frac{m_j}{s_j},\frac{n_j}{s_j})$ with coding
$\mathcal{W}_{(j)}$, the prefix of $\mathcal{W}$ of length $j$. Rules
(1) and (2) above still hold and show how to construct the
representations of the approximations. Notice that the choice of $[2]$
as the second component of the representation is arbitrary, and $[2]$
can be replaced by any continued fraction expansion. However
this choice does not change the form of the word $\mathcal{W}$.

\begin{example}
    Let $\alpha_0=1$ and for $k\geq 1$
    let $\alpha_k=p_k$, with $p_k$ being the $k$-th prime number, so
    that the triangle sequence we are considering is
    $[1,\,2,\,3,\,5,\,7,\,11,\,\ldots]$. From \eqref{eq:1-lambda}, for
    $k\geq 0$ we have
    \[
        1-\lambda_k = \alpha_{k+1}\frac{q_{k-1}}{q_{k+1}} =
        \frac{\alpha_{k+1}}{\alpha_{k+1} +
          \frac{q_{k-2}+q_k}{q_{k-1}}}\leq \frac{\alpha_{k+1}}{1+\alpha_{k+1}},
    \]
    where the last inequality holds since
    $\frac{q_{k-2}+q_k}{q_{k-1}}\geq \frac{q_k}{q_{k-1}}\geq 1$. Each
    triangle sequence digit is non-zero, thus $\lambda_k<1$ for all
    $k\geq 0$ and we thus have
    \[
        \prod_{k=0}^\infty (1-\lambda_k) = \prod_p \frac{p}{1+p},
    \]
    where the right-hand-side product extends over all the prime
    numbers. We thus have
    \[
        \prod_p \frac{p}{1+p} = \prod_p \left(1-\frac{1}{p+1}\right) <
        \prod_{p>2} \left(1-\frac 1p\right)=0,
    \]
    and the last product diverges to $0$ because the sum of the
    reciprocals of the primes diverges. As a consequence, the given
    triangle sequence represents a unique point of $\trianglecl$. This
    point is represented by the word
    \[
        \mathcal{W} =LIL^2IL^3IL^5IL^7IL^{11}I\cdots
    \]
    and its first approximations are
    \[
        \begin{array}{cll}
            \triple{1}{1}{2}   \quad &\mathcal{W}_{(0)} = \eps      \quad &([1],[2])\\[0.2cm]
            \triple{1}{1}{3}   \quad &\mathcal{W}_{(1)} = L         \quad &([2],[3])\\[0.2cm]
            \triple{2}{1}{4}   \quad &\mathcal{W}_{(2)} = LI        \quad &([2],[2])\\[0.2cm]
            \triple{3}{1}{5}   \quad &\mathcal{W}_{(3)} = LIL       \quad &([2],[3])\\[0.2cm]
            \triple{4}{1}{6}   \quad &\mathcal{W}_{(4)} = LILL      \quad &([2],[4])\\[0.2cm]
            \triple{5}{2}{8}   \quad &\mathcal{W}_{(5)} = LILLI     \quad &([1,3],[2])\\[0.2cm]
            \triple{6}{3}{10}  \quad &\mathcal{W}_{(6)} = LILLIL    \quad &([1,3],[3])\\[0.2cm]
            \triple{7}{4}{12}  \quad &\mathcal{W}_{(7)} = LILLILL   \quad &([1,3],[4])\\[0.2cm]
            \triple{8}{5}{14}  \quad &\mathcal{W}_{(8)} = LILLILLL  \quad &([1,3],[5])\\[0.2cm]
            \triple{11}{6}{19} \quad &\mathcal{W}_{(9)} = LILLILLLI \quad &([1,2,4],[2])\\[0.2cm]
            \vdots&&
        \end{array}
    \]
\end{example}

\subsection{Non-convergent infinite case}
Let $(\alpha,\beta)$ have a non-convergent infinite triangle sequence
$[\alpha_0,\,\alpha_1,\,\dots]$. It means that $(\alpha,\beta)$ is in
the line segment $\mathfrak{L}$ of points sharing the same triangle
sequence. In this case we have seen that the representation
$([\alpha_0,\,\alpha_1,\,\ldots],[2])$ encodes the whole segment, and
it is impossible to distinguish different points on $\mathfrak{L}$ by
using it.

Here we propose a possible way to construct approximations of
$(\alpha,\beta)$ following the methods used in the other cases. This
proposal is certainly not the only meaningful and not the most
``natural'' in any sense. For all $j\ge 0$, the point $(\alpha,\beta)$
is in $\triangle(\alpha_0,\ldots,\alpha_j)$, so that
\[
    (\xi_j,\eta_j) \coloneqq T^j(\alpha,\beta) \in
    \triangle(\alpha_j).
\]
Since $\alpha_j\to +\infty$, the second component $\eta_j$ is
vanishing as $j$ increases. Let then
$\frac{p_j(j)}{q_j(j)} = [a_1(j),\,\ldots,\,a_j(j)]$ be the $j$-th
convergent of $\xi_j$, obtained from the continued fraction expansion
$[a_1(j),\,a_2(j),\,a_3(j),\,\ldots]$ of $\xi_j$. Then for
$\omega=1^{\alpha_0}0\dots1^{\alpha_{j-1}}0$ it holds
\[
    \triple{m_j}{n_j}{s_j} \coloneqq \phi_{\omega}
    \triple{p_j(j)}{0}{q_j(j)} \in \triangle(\alpha_0,\,\ldots,\,\alpha_j)
\]
and
\[
    \lim_{j\to \infty} \Big|(\alpha,\beta) - \triple{m_j}{n_j}{s_j}
    \Big| =0.
\]
We then consider the approximations of $(\alpha,\beta)$ given by the
rational pairs with representations
\[
    ([\alpha_0,\,\ldots,\,\alpha_j],[a_1(j),\,\ldots,\,a_j(j)])
\]
for $j\ge 0$, defined as above.


\section{Speed of the approximations} \label{sec:speed}

In this final section we study the problem of the speed of the
approximations introduced before. In particular given a non-rational
pair $(\alpha,\beta) \in \trianglecl$ we have defined a sequence
$(\frac{m_j}{s_j},\frac{n_j}{s_j})$ of rational pairs in the
Triangular tree which approximate $(\alpha,\beta)$. The speed of the
approximations we consider concerns the supremum of the exponents
$\eta>0$ for which
\begin{equation}
    \label{prob-speed}
    \liminf_{j \to \infty} s_j^\eta
    \left|\alpha-\frac{m_j}{s_j}\right|
    \left|\beta-\frac{n_j}{s_j}\right| = 0.
\end{equation}
This problem is also known as the problem of simultaneous
approximations of real numbers, and two famous problems in this
research area are Dirichlet's Theorem and Littlewood's Conjecture (see
\cite{schmidt-book} and \cite{dioph-review} for more details).

We start with some notation. Let $(\alpha,\beta)$ be a point in
$\trianglecl$ with $\alpha$ or $\beta$ irrational. Let
$\omega \in \set{0,1}^*$ and $\phi_{\omega}$ the correspondent
concatenation of $\phi_0$ and $\phi_1$, we use the notation
\begin{equation}\label{denom-multi}
    \phi_{\omega}\triple{p}{r}{q} = \left(
      \frac{m_{\omega}(p,r,q)}{s_{\omega}(p,r,q)} ,
      \frac{n_{\omega}(p,r,q)}{s_{\omega}(p,r,q)} \right) ,
\end{equation}
for the image of rational pairs $(\frac pq, \frac rq)$ under
$\phi_{\omega}$. In \cite[Appendix A]{cas} we have introduced a matrix
representation of the maps $\phi_0$ and $\phi_1$ by
\begin{equation} \label{matrici-01}
    M_0 \coloneqq M_{\phi_0} =
    \begin{pmatrix}
    1 & 0 & 1 \\
    1 & 0 & 0 \\
    0 & 1 & 0
    \end{pmatrix}
    \qquad\text{and}\qquad M_1 \coloneqq M_{\phi_1} =
   \begin{pmatrix}
    1 & 0 & 1 \\
    0 & 1 & 0 \\
    0 & 0 & 1
    \end{pmatrix} ,
\end{equation}
from which we obtain a matrix representation with non-negative
integers coefficients for any combination $\phi_{\omega^j}$ of
$\phi_0$ and $\phi_1$, which we denote by
\[
    M_{\omega^j}\coloneqq
    \begin{pmatrix}
       \rho & \sigma & \tau \\
       \rho_1 & \sigma_1 & \tau_1 \\
       \rho_2 & \sigma_2 & \tau_2
    \end{pmatrix}
\]
not including the dependence on $j$ in the notation for the
coefficients of the matrix if not necessary. Using the matrix
representation we write for all $(x,y)\in \R^2$
\begin{equation}
    \label{matr-repres}
    \phi_{_{\omega^j}}(x,y) = \left( \frac{\rho_1 + \sigma_1 x +
        \tau_1 y}{\rho + \sigma x + \tau y} , \frac{\rho_2 + \sigma_2
        x + \tau_2 y}{\rho + \sigma x + \tau y}\right) ,
\end{equation}
so that in particular it holds
\begin{equation}\label{den-matrix}
    s_{\omega^j}(p,r,q) = \rho q + \sigma p + \tau r
\end{equation}
in \eqref{denom-multi}.

In this paper we begin to study the problem of the speed of the
approximations by considering the first simple classes of real pairs
of numbers with at least one irrational component: the pairs with
finite triangle sequence and the pairs with periodic triangle sequence
$[d,d,d,\,\dots]$ for $d\ge 3$.

\subsection{Real pairs with finite triangle sequence}
\label{sec:finite-approx}

Let $(\alpha,\beta)\in \trianglecl$, with $\alpha$ or $\beta$
irrational, have finite triangle sequence
$[\alpha_0,\,\ldots,\,\alpha_k]$, then as proved in Lemma
\ref{lemma:key}, in particular see \eqref{eq:backSigma}, there exists
$\omega \in \set{0,1}^*$ and $\xi \in [0,1]$ such that
$(\alpha,\beta) = \phi_{\omega}(\xi,0)$ (recall that
$(\xi,0)=\phi_2(\xi,\xi)$). As remarked in Section \ref{sec:approx},
we can definitively consider the problem \eqref{prob-speed} for
rational pairs $(\frac{m_j}{s_j},\frac{n_j}{s_j})$ with the same
triangle sequence of $(\alpha,\beta)$. It follows that we can consider
rational pairs as obtained in \eqref{denom-multi} with
$\phi_\omega$. Hence we can write
\[
    \Big|\alpha - \frac{m_{\omega}(p,r,q)}{s_{\omega}(p,r,q)}\Big| =
    \Big| \left(\phi_{\omega}(\xi,0)\right)_1 -
    \left(\phi_{\omega}\triple{p}{r}{q}\right)_1\Big|
\]
\[
    \Big| \beta - \frac{n_{\omega}(p,r,q)}{s_{\omega}(p,r,q)}\Big| =
    \Big| \left(\phi_{\omega}(\xi,0)\right)_2 -
    \left(\phi_{\omega}\triple{p}{r}{q}\right)_2\Big|
\]
where the subscripts refer to the first and second component
respectively, and consider $(\frac pq,\frac rq)$ as approximations of
$(\xi,0)$.

Using the matrix representation \eqref{matr-repres} for
$\phi_{\omega}$ we can write
\begin{align*}
    \alpha - \frac{m_{\omega}(p,r,q)}{s_{\omega}(p,r,q)} &=
    \frac{\rho_1 + \sigma_1 \xi}{\rho + \sigma \xi} - \frac{\rho_1 q +
      \sigma_1 p + \tau_1 r}{\rho q + \sigma p + \tau r} =\\
    &= \frac{(\sigma_1 \tau- \sigma \tau_1)\xi r +(\sigma_1 \rho -
      \sigma \rho_1)(\xi q-p)+(\tau \rho_1 - \tau_1 \rho) r}{(\rho +
      \sigma \xi) (\rho q + \sigma p + \tau r)}
\end{align*}
and analogously
\[
    \beta - \frac{n_{\omega}(p,r,q)}{s_{\omega}(p,r,q)} =
    \frac{(\sigma_2 \tau- \sigma \tau_2) \xi r +(\sigma_2 \rho -
      \sigma \rho_2)(\xi q-p)+(\tau \rho_2 - \tau_2 \rho) r}{(\rho +
      \sigma \xi) (\rho q + \sigma p + \tau r)}
\]
Using that for all $\xi \in \R$ there exist two sequences $(p_j)_j$ of
integers and $(q_j)_j$ of positive integers with $(p_j,q_j)=1$, such
that $|\xi q_j - p_j| \le \frac 1{q_j}$ for all $j$, and letting
$r_j= 0$ for all $j$, so that it holds
$\triple{p_j}{r_j}{q_j} \in \trianglecl \cap \Q^2$, we obtain that
there exist two sequences $(p_j)_j$ of integers and $(q_j)_j$ of
positive integers such that
\[
    \Big| \alpha - \frac{m_{\omega}(p_j,0,q_j)}{s_{\omega}(p_j,0,q_j)}
    \Big| \le c \frac{1}{q_j s_{\omega}(p_j,0,q_j)}
    \quad\text{and}\quad
    \Big| \beta -
    \frac{n_{\omega}(p_j,0,q_j)}{s_{\omega}(p_j,0,q_j)}\Big| \le c
    \frac{1}{q_j s_{\omega}(p_j,0,q_j)}
\]
where the constant $c$ does not depend on $p_j$ and $q_j$. Therefore
choosing the two sequences $(p_j)_j$ and $(q_j)_j$ as before, we have
that for all $\eps>0$
\begin{align*}
    \liminf_{s\to \infty} &s^{(4-\eps)} \Big(\inf_{m\in \Z} \Big|
    \alpha -\frac ms\Big|\Big) \Big(\inf_{n\in \Z} \Big| \beta -\frac
    ns\Big|\Big) \le \liminf_{j \to \infty} s_j^{(4-\eps)}
    \left|\alpha-\frac{m_j}{s_j}\right|
    \left|\beta-\frac{n_j}{s_j}\right| \le\\
    &\le \lim_{j \to \infty} s_{\omega}(p_j,0,q_j)^{(4-\eps)}
    \left|\alpha-\frac{m_{\omega}(p_j,0,q_j)}{s_{\omega}(p_j,0,q_j)}\right|
    \left|\beta-\frac{n_{\omega}(p_j,0,q_j)}{s_{\omega}(p_j,0,q_j)}\right|
    \le\\
    &\le c^2 \lim_{j \to \infty}
    \frac{s_{\omega}(p_j,0,q_j)^{(2-\eps)}}{q_j^2} = 0 ,
\end{align*}
since by \eqref{den-matrix} holds $s_{\omega}(p_j,0,q_j) \le c' q_j$
for a suitable constant $c'$. This argument also implies that in this
case $(\alpha,\beta)$ is is not a Bad~$(i_1,i_2)$ pair, for all
admissible $(i_1,i_2)$ (see \cite{schmidt-book} and
\cite{dioph-review}).

\subsection{Real pairs with periodic triangle sequence
  $[d,d,d,\,\dots]$ for $d\ge 3$}
\label{sec:periodic-approx}

Let $(\alpha,\beta)\in \trianglecl$, with $\alpha$ or $\beta$
irrational, have triangle sequence $[d,d,d,\,\dots]$ for $d\ge 3$. These
pairs correspond to fixed points for the Triangle Map $T$ defined in
Section \ref{sec:trianglemaps} (all fixed points of $T$ are obtained
by considering also the real pairs with triangle sequence
$[d,d,d,\,\dots]$ for $d=0,1,2$).

It is shown in \cite{garr} that if $(\alpha,\beta)$ has triangle
sequence $[d,d,d,\,\dots]$ then $\beta = \alpha^2$ and
$\alpha\in (0,1)$ is the largest root of the polynomial
$P(t)=t^3 + dt^2 +t-1$. It follows that $\alpha$ is a cubic number,
and $\alpha$ and $\beta$ are in the same cubic number field. We remark
that the polynomial $P(t)$ has three real roots for $d\ge 3$, and only
one real root for $d=0,1,2$. Hence here we only consider the simplest
case for the roots of $P(t)$.

In Section \ref{sec:approx} we have constructed approximations of
$(\alpha,\beta)$ by rational pairs in the Triangular tree with
representation $([d,d,\,\dots,d],[2])$, where we recall that instead
of $[2]$ one could choose any fixed continued fraction
expansion $[a_1,\dots,a_n]$. Then using \eqref{matrici-01} we consider
the matrix
\[
    M_d \coloneqq M_1^d M_0 = \begin{pmatrix} 1 & d & 1 \\ 1 & 0 & 0\\
        0 & 1 & 0
\end{pmatrix}
\]
which as in \eqref{matr-repres} represents the map
\[
    \phi_{\omega} (x,y) = \left( \frac{1}{1 + d x + y} , \frac{x}{1 +
        d x + y}\right) \quad \text{with}\quad \omega=(11\cdots1 0) \in
    \{ 0,1\}^{d+1}
\]
and use approximations of $(\alpha,\beta)$ of the form
\begin{equation} \label{periodic-approx} \triple{m_k}{n_k}{s_k} =
    \phi_{_{M_d^k}}\left(\frac 12, 0\right) \eqqcolon
    \phi_{\overline{\omega}^k}\left(\frac 12, 0\right)
\end{equation}
where $\overline{\omega}^k \in \{0,1\}^{k(d+1)}$ is the string
obtained by concatenating $k$ copies of $\omega$. In matrix
representation, \eqref{periodic-approx} can be written as
\[
    \begin{pmatrix} s_k\\ m_k\\ n_k \end{pmatrix} =
    M_d^k \begin{pmatrix} 2\\ 1\\ 0 \end{pmatrix}
\]
and we can then use linear algebra to study the properties of the approximations.

\begin{remark}\label{rem-diff-point}
    As shown in \eqref{converg-ok} all points in $\trianglecl$
    converge to $(\alpha,\beta)$ under repeated applications of
    $\phi_\omega$. Therefore different approximations of
    $(\alpha,\beta)$ can be constructed as in \eqref{periodic-approx}
    by using a sequence $(\frac{p_k}{q_k},\frac{r_k}{q_k})$ of
    possibly different rational pairs instead of the fixed pair
    $(\frac 12,0)$.
\end{remark}

The matrix $M_d$ has characteristic polynomial
$p_d(\lambda)=\lambda^3 -\lambda^2 -d \lambda -1$ and, for $d\ge 3$,
distinct eigenvalues $\lambda_1$, $\lambda_2$, and $\lambda_3$
satisfying
\begin{equation}
    \label{eigen-md}
    \lambda_1 =\frac{1}{\alpha_2} < \lambda_2=\frac{1}{\alpha_1} < 0 <
    1< \lambda_3 = \frac 1 \alpha \quad \text{with}\quad \lambda_3 >
    |\lambda_1| \ge 1 > |\lambda_2|
\end{equation}
where $\alpha_1<\alpha_2<0<\alpha<1$ are the roots of $P(t)$, and
eigenvectors
\[
    v_1 = \begin{pmatrix}1 \\[0.1cm] \alpha_2\\[0.1cm]
        \alpha_2^2 \end{pmatrix} , \quad v_2 = \begin{pmatrix}1
        \\[0.1cm] \alpha_1\\[0.1cm] \alpha_1^2 \end{pmatrix} , \quad
    v_3 = \begin{pmatrix}1 \\[0.1cm] \alpha \\[0.1cm]
        \beta \end{pmatrix}.
\]
We also recall that for $d\ge 3$ fixed, and given $\alpha$ the largest
root of $P(t)$, it holds
\begin{equation}\label{roots-relaz}
    \alpha_1 = \frac{-(d+\alpha)-\sqrt{(d+\alpha)^2-\frac 4\alpha}}{2}
    \quad\text{and}\quad \alpha_2 =
    \frac{-(d+\alpha)+\sqrt{(d+\alpha)^2-\frac 4\alpha}}{2}.
\end{equation}
For simplicity of notation in the following we let
\[
    h(d,\alpha) \coloneqq \sqrt{(d+\alpha)^2-\frac 4\alpha}
\]
We are now ready to prove the following result.

\begin{proposition}\label{prop:cubic-uno}
    Let $(\alpha,\beta)\in \trianglecl$, with $\alpha$ or $\beta$
    irrational, have triangle sequence $[d,d,d,\,\dots]$ for $d\ge
    3$. There exist a constant $c(\alpha,d)$, functions
    $f^i_{\alpha,d}:\Z^3 \to \R$ with $i=1,2$ given by
    \[
        \begin{aligned}
            f^1_{\alpha,d}(q,p,r) &=  \frac{h(d,\alpha) \left(d+3\alpha+h(d,\alpha)\right) (1+\alpha^2(d+2\alpha))}{4\alpha} \Big[ q \left(3-\alpha+\alpha^2 h(d,\alpha)\right) +{} \\[0.2cm]
            &\hspace{1cm}+ p \left(-2-2d\alpha-2\alpha h(d,\alpha)\right) +  r \left(-3\alpha-d+h(d,\alpha)\right) \Big]\\[0.2cm]
            f^2_{\alpha,d}(q,p,r) &=   -\frac{h(d,\alpha) \left(d+3\alpha-h(d,\alpha)\right) (1+\alpha^2(d+2\alpha))}{4\alpha} \Big[ q \left(3-\alpha-\alpha^2 h(d,\alpha)\right) +{} \\[0.2cm]
            &\hspace{1cm} + p \left(-2-2d\alpha+2\alpha
              h(d,\alpha)\right) + r
            \left(-3\alpha-d-h(d,\alpha)\right) \Big]
        \end{aligned}
    \]
    and functions $g^i_{\alpha,d}:\Z^3 \to \R$ with $i=1,2,3$ given by
    \[
        g^1_{\alpha,d}(q,p,r) = \frac{d+3\alpha+h(d,\alpha)}{2} \left[
          q \frac{(-\alpha)\left(d+\alpha+h(d,\alpha)\right)}{2} + p
          \frac{d-\alpha+h(d,\alpha)}{2} + r\right]
    \]
    \[
        g^2_{\alpha,d}(q,p,r) = \frac{-d-3\alpha+h(d,\alpha)}{2}
        \left[ q \frac{(-\alpha)\left(d+\alpha-h(d,\alpha)\right)}{2}
          + p \frac{d-\alpha-h(d,\alpha)}{2} + r\right]
    \]
    \[
        g^3_{\alpha,d}(q,p,r) = -h(d,\alpha) \left[q \frac 1 \alpha +p
          (d+\alpha) + r\right]
    \]
    such that for all $k\ge 1$ and all
    $(\frac pq,\frac rq)\in \trianglecl$ the rational pair
    $(\frac ms,\frac ns)= \phi_{M_d^k}(\frac pq,\frac rq)$ satisfies
    \begin{equation}\label{prima-stima}
        \begin{aligned}
            & \alpha - \frac ms = c(\alpha,d) \frac{f^1_{\alpha,d}(q,p,r) \lambda_1^k  + f^2_{\alpha,d}(q,p,r) \lambda_2^k}{g^1_{\alpha,d}(q,p,r) \lambda_1^k +g^2_{\alpha,d}(q,p,r) \lambda_2^k+g^3_{\alpha,d}(q,p,r) \lambda_3^k}\\[0.3cm]
            & \beta - \frac ns=  \frac{c(\alpha,d)}{2} \frac{(\alpha-d+h(d,\alpha)) f^1_{\alpha,d}(q,p,r) \lambda_1^k  + (\alpha-d-h(d,\alpha))\ f^2_{\alpha,d}(q,p,r) \lambda_2^k}{g^1_{\alpha,d}(q,p,r) \lambda_1^k +g^2_{\alpha,d}(q,p,r) \lambda_2^k+g^3_{\alpha,d}(q,p,r) \lambda_3^k}\\[0.3cm]
        \end{aligned}
    \end{equation}
    where $\lambda_1$, $\lambda_2$, and $\lambda_3$ are defined in
    \eqref{eigen-md}.
\end{proposition}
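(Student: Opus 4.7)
The plan is to diagonalise $M_d$ and follow $(q,p,r)^{T}$ in the eigenbasis. A direct check will show that $M_d$ has characteristic polynomial $\lambda^{3}-\lambda^{2}-d\lambda-1$ and that $(1,\mu,\mu^{2})^{T}$ is an eigenvector with eigenvalue $1/\mu$ whenever $P(\mu)=0$; this recovers \eqref{eigen-md} and identifies the eigenvectors $v_{1},v_{2},v_{3}$, associated with $\mu_{1}=\alpha_{2}$, $\mu_{2}=\alpha_{1}$, and $\mu_{3}=\alpha$.

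I will then decompose $(q,p,r)^{T}=c_{1}v_{1}+c_{2}v_{2}+c_{3}v_{3}$ via Cramer's rule on the Vandermonde matrix $V=[v_{1}\,v_{2}\,v_{3}]$, whose determinant is $\det V=(\alpha_{1}-\alpha_{2})(\alpha-\alpha_{2})(\alpha-\alpha_{1})=-h\,P'(\alpha)$ with $P'(\alpha)=3\alpha^{2}+2d\alpha+1$. The standard Vandermonde cofactor computation gives
\[
c_{i}(q,p,r)=\frac{q\mu_{j}\mu_{\ell}-p(\mu_{j}+\mu_{\ell})+r}{(\mu_{i}-\mu_{j})(\mu_{i}-\mu_{\ell})},\qquad\{i,j,\ell\}=\{1,2,3\}.
\]
Iterating by $M_{d}^{k}$ preserves the decomposition, so $(s,m,n)^{T}=\sum_{i}c_{i}\lambda_{i}^{k}v_{i}$. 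Since $\mu_{3}=\alpha$ and $\beta=\alpha^{2}$, the $i=3$ term drops out of
\[
\alpha s-m=\sum_{i=1}^{2}c_{i}(\alpha-\mu_{i})\lambda_{i}^{k},\qquad
\beta s-n=\sum_{i=1}^{2}c_{i}(\alpha-\mu_{i})(\alpha+\mu_{i})\lambda_{i}^{k}.
\]

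To match the form of \eqref{prima-stima}, I would rescale numerator and denominator by the common factor $-h\,P'(\alpha)$. For the denominator this turns $c_{i}$ into $g^{i}_{\alpha,d}(q,p,r)$: the case $i=3$ is immediate from Vieta ($\alpha_{1}\alpha_{2}=1/\alpha$, $\alpha_{1}+\alpha_{2}=-d-\alpha$), and the cases $i=1,2$ follow after substituting $\alpha_{1,2}=(-d-\alpha\mp h)/2$ from \eqref{roots-relaz} into the Vandermonde formula. For the numerator, the extra factor $\alpha-\mu_{i}=(3\alpha+d\mp h)/2$ will combine with the prefactor of $g^{i}$ via the identity
\[
(3\alpha+d-h)(3\alpha+d+h)=(3\alpha+d)^{2}-h^{2}=\frac{4\bigl(1+\alpha^{2}(d+2\alpha)\bigr)}{\alpha},
\]
an immediate consequence of $h^{2}=(d+\alpha)^{2}-4/\alpha$ and the cubic $\alpha^{3}+d\alpha^{2}+\alpha-1=0$. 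Straightforward simplification will then yield $-h\,P'(\alpha)\,c_{i}(\alpha-\mu_{i})=c(\alpha,d)\,f^{i}_{\alpha,d}(q,p,r)$ with the explicit scalar $c(\alpha,d)=-\alpha/[\,h\,(1+\alpha^{2}(d+2\alpha))\,]$. For $\beta-n/s$ the computation is identical except for the extra factor $\alpha+\mu_{i}=(\alpha-d\pm h)/2$, which produces the displayed prefactors $(\alpha-d\pm h)$ and the overall $1/2$.

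The linear-algebra backbone is routine; the main obstacle I expect is the algebraic identification of the precise brackets in $f^{i}_{\alpha,d}$ and $g^{i}_{\alpha,d}$ with the Vandermonde expressions above. The key simplifications all stem from $P(\alpha)=0$, in the forms $\alpha(d+\alpha)=1/\alpha-1$, $3\alpha^{2}+2d\alpha+1=(\alpha-\alpha_{1})(\alpha-\alpha_{2})$, and $2\alpha^{3}+d\alpha^{2}+1=1+\alpha^{2}(d+2\alpha)$. Once these identities are in hand, matching coefficients of $q$, $p$, and $r$ reduces \eqref{prima-stima} to a finite collection of polynomial identities in $\alpha$, $d$, and $h$.
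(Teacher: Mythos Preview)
Your proposal is correct and follows essentially the same approach as the paper: both diagonalise $M_d$, exploit that $v_3=(1,\alpha,\beta)^T$ is the dominant eigenvector so that the $\lambda_3^k$ contribution cancels in $\alpha s-m$ and $\beta s-n$, and then identify the remaining coefficients with the stated $f^i_{\alpha,d}$ and $g^i_{\alpha,d}$ via \eqref{roots-relaz} and $P(\alpha)=0$. The only organisational difference is that the paper computes the entries of $M_d^k=\Pi\Lambda^k\Pi^{-1}$ explicitly and then substitutes into the fractional-linear expression \eqref{intanto1} for $\alpha-m/s$ (obtaining, in particular, the simplification $\rho+\sigma\alpha+\tau\beta=\lambda_3^k$), whereas you decompose the input vector $(q,p,r)^T$ in the eigenbasis and track it forward; these are two sides of the same diagonalisation and lead to the same algebraic verifications.
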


\begin{proof}
    If $(\alpha,\beta)$ has triangle sequence $[d,d,d,\,\dots]$ then
    it is a fixed point of the Triangle Map $T$ with
    $\beta = \alpha^2$ and, in particular using \eqref{eq:T-jumpS},
    $(\alpha,\beta) = S^{d+1}(\alpha,\beta) = S|_{\Gamma_0}\circ
    S|_{\Gamma_1}^d (\alpha,\beta)$ for the map $S$ defined in
    \eqref{eq-slow}. It follows that
    $(\alpha,\beta) = \phi_{M_d^k}(\alpha,\beta)$ for all $k\ge 1$. We
    can thus write for fixed $k\ge 1$ and rational pair
    $(\frac pq,\frac rq)\in \trianglecl$
\begin{equation} \label{intanto1}
\begin{aligned}
    \alpha - \frac ms &=  \left(\phi_{M_d^k}(\alpha,\beta)\right)_1 - \left(\phi_{M_d^k}\triple{p}{r}{q}\right)_1 = \\
    &= \frac{(\rho \sigma_1 -\rho_1 \sigma) (\alpha q-p) + (\tau
      \sigma_1 - \tau_1 \sigma)(\alpha r -\beta p)+ (\rho
      \tau_1-\rho_1 \tau)(\beta q-r)}{(\rho+\sigma \alpha + \tau
      \beta) (\rho q+ \sigma p + \tau r)}
\end{aligned}
\end{equation}
using the matrix representation \eqref{matr-repres}. Let $\Lambda$ be
the diagonal matrix
$\Lambda= {\textrm{diag}}(\lambda_1,\lambda_2,\lambda_3)$, then
$M_d^k = \Pi \Lambda^k \Pi^{-1}$ where $\Pi$ is the matrix with
columns the eigenvectors $v_1,v_2$ and $v_3$. We use this
representation to have an explicit form for the terms in
\eqref{intanto1}, where the dependence on $k$ appears in the power of
the eigenvalues and is of course hidden in the coefficients of
$M_d^k$. In what follows, all not explained steps are just
straightforward computations. We have
\[
    \Pi = \begin{pmatrix} 1 & 1 & 1 \\[0.1cm] \alpha_2 & \alpha_1 &
        \alpha \\[0.1cm] \alpha_2^2 & \alpha_1^2 & \beta
    \end{pmatrix} \quad\text{and}\quad \Pi^{-1} =
    \frac{1}{\det(\Pi)} \begin{pmatrix} \alpha_1 \beta - \alpha_1^2
        \alpha & \alpha_1^2 - \beta & \alpha-\alpha_1 \\[0.1cm]
        \alpha_2^2 \alpha - \alpha_2 \beta & \beta - \alpha_2^2 &
        \alpha_2-\alpha\\[0.1cm] \alpha_1^2 \alpha_2 - \alpha_1
        \alpha_2^2 & \alpha_2^2 - \alpha_1^2 & \alpha_1-\alpha_2
\end{pmatrix}
\]
with
$\det(\Pi) = \alpha_1 \alpha (\alpha-\alpha_1) + \alpha_2 \alpha
(\alpha_2-\alpha) + \alpha_1 \alpha_2 (\alpha_1-\alpha_2)$.\\[0.15cm]
We first consider the denominator of \eqref{intanto1}. Let us write
the first row of $M_d^k$ as a function of $\alpha$ and $d$. It holds
\begin{equation}\label{prima-riga}
    \begin{aligned}
        \rho &=  \frac{1}{\det(\Pi)} \Big[ \lambda_1^k \alpha \alpha_1 (\alpha-\alpha_1) +  \lambda_2^k \alpha \alpha_2 (\alpha_2-\alpha) +  \lambda_3^k \alpha_1 \alpha_2 (\alpha_1-\alpha_2)\Big]\\[0.2cm]
        \sigma &=  \frac{1}{\det(\Pi)} \Big[ \lambda_1^k (\alpha_1^2 - \alpha^2) + \lambda_2^k (\alpha^2 - \alpha_2^2) + \lambda_3^k (\alpha_2^2 - \alpha_1^2)\Big]\\[0.2cm]
        \tau &= \frac{1}{\det(\Pi)} \Big[ \lambda_1^k (\alpha-\alpha_1) +
        \lambda_2^k (\alpha_2-\alpha) + \lambda_3^k
        (\alpha_1-\alpha_2)\Big]
    \end{aligned}
\end{equation}
then we obtain
\begin{equation}\label{int-den1}
    \rho+\sigma \alpha + \tau \beta = \lambda_3^k
\end{equation}
and
\begin{align*}
    rho q+ \sigma p + \tau r = \frac{1}{\det(\Pi)} &\left[ \lambda_1^k\Big(q\alpha_1 \alpha(\alpha-\alpha_1) + p(\alpha_1^2-\alpha^2) + r(\alpha-\alpha_1)\Big) \right. +\\
    & \hspace{0.5cm} +\left. \lambda_2^k \Big( q\alpha_2
      \alpha(\alpha_2-\alpha) +
      p(\alpha^2-\alpha_2^2) + r(\alpha_2-\alpha)\Big) +  \right.\\
    & \hspace{1cm} +\left.\lambda_3^k \Big( q\alpha_1
      \alpha_2(\alpha_1-\alpha_2) + p(\alpha_2^2-\alpha_1^2) +
      r(\alpha_1-\alpha_2)\Big) \right]
\end{align*}
which using \eqref{roots-relaz} gives
\begin{equation}\label{int-den2}
    \rho q+ \sigma p + \tau r =\frac{1}{\det(\Pi)} \Big(
    g^1_{\alpha,d}(q,p,r) \lambda_1^k +g^2_{\alpha,d}(q,p,r)
    \lambda_2^k+g^3_{\alpha,d}(q,p,r) \lambda_3^k \Big)
\end{equation}
with the functions $ g^i_{\alpha,d}(q,p,r)$ defined in the
statement. For the numerator of \eqref{intanto1} let us write the
second row of $M_d^k$ as a function of $\alpha$ and $d$. It holds
\[
\begin{aligned}
    \rho_1 &= \frac{1}{\det(\Pi)} \Big[ \lambda_1^k (\alpha-\alpha_1) +  \lambda_2^k (\alpha_2-\alpha) +  \lambda_3^k (\alpha_1-\alpha_2)\Big]\\
    \sigma_1 &= \frac{1}{\det(\Pi)} \Big[ \lambda_1^k \alpha_2 (\alpha_1^2 - \alpha^2) + \lambda_2^k \alpha_1 (\alpha^2 - \alpha_2^2) + \lambda_3^k \alpha (\alpha_2^2 - \alpha_1^2)\Big]\\
    \tau_1 &= \frac{1}{\det(\Pi)} \Big[ \lambda_1^k \alpha_2
    (\alpha-\alpha_1) + \lambda_2^k \alpha_1(\alpha_2-\alpha) +
    \lambda_3^k \alpha (\alpha_1-\alpha_2)\Big]
\end{aligned}
\]
and then one can write the numerator of \eqref{intanto1} as
\[
    \frac{1}{(\det(\Pi))^2} \Big(f^1_{\alpha,d}(q,p,r) \lambda_1^k
    \lambda_3^k + f^2_{\alpha,d}(q,p,r) \lambda_2^k
    \lambda_3^k+f^3_{\alpha,d}(q,p,r) \lambda_1^k \lambda_2^k\Big)
\]
with $f^3_{\alpha,d}(q,p,r)\equiv 0$, and $f^1_{\alpha,d}(q,p,r)$ and
$f^2_{\alpha,d}(q,p,r)$ defined as in the statement. This gives the
first of \eqref{prima-stima}.

Now we can repeat the argument for the second term in \eqref{prima-stima} to get for fixed $k\ge 1$ and rational pair $(\frac pq,\frac rq)\in \trianglecl$
\begin{equation} \label{intanto2}
\begin{aligned}
    \beta - \frac ns &= \left(\phi_{M_d^k}(\alpha,\beta)\right)_2 - \left(\phi_{M_d^k}\triple{p}{r}{q}\right)_2 = \\
    &= \frac{(\rho \sigma_2 -\rho_2 \sigma) (\alpha q-p) + (\tau
      \sigma_2 - \tau_2 \sigma)(\alpha r -\beta p)+ (\rho
      \tau_2-\rho_2 \tau)(\beta q-r)}{(\rho+\sigma \alpha + \tau
      \beta) (\rho q+ \sigma p + \tau r)}
\end{aligned}
\end{equation}
Equations \eqref{prima-riga}, \eqref{int-den1} and \eqref{int-den2}
give the denominator of \eqref{intanto2}. For the numerator we write
the third row of $M_d^k$ as a function of $\alpha$ and $d$, finding
\[
    \frac{1}{(\det(\Pi))^2} \Big(\frac{\alpha-d+h(d,\alpha)}{2}
    f^1_{\alpha,d}(q,p,r) \lambda_1^k \lambda_3^k +
    \frac{\alpha-d-h(d,\alpha)}{2} f^2_{\alpha,d}(q,p,r) \lambda_2^k
    \lambda_3^k\Big)
\]
From this we obtain the second equation in \eqref{prima-stima}.
\end{proof}

\begin{corollary}
    \label{cor-approx-periodic}
    Let $(\alpha,\beta)\in \trianglecl$, with $\alpha$ or $\beta$
    irrational, have triangle sequence $[d,d,d,\,\dots]$ for $d\ge 3$,
    and consider the approximations
    $(\frac{m_k}{s_k},\frac{n_k}{s_k})$ defined in
    \eqref{periodic-approx}. Then
    \[
        \lim_{k\to \infty} s_k^\eta
        \left|\alpha-\frac{m_k}{s_k}\right|
        \left|\beta-\frac{n_k}{s_k}\right| = 0
    \]
    for all
    $\eta<2 \big(1- \frac{\log |\lambda_1|}{\log \lambda_3}\big)$ if
    $d\ge 4$, where we are using \eqref{eigen-md}, and for for all
    $\eta<4$ if $d=3$.
\end{corollary}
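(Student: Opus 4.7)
The plan is to apply Proposition~\ref{prop:cubic-uno} with the starting triple $(q,p,r)=(2,1,0)$, corresponding to the pair $(\tfrac 12,0)$ used in \eqref{periodic-approx}, and then perform a dominant-eigenvalue asymptotic analysis on the resulting rational expressions. From \eqref{eigen-md} one has $\lambda_3>|\lambda_1|\ge 1>|\lambda_2|$, so the denominator of \eqref{prima-stima} is governed by the $\lambda_3^k$ term provided $g^3_{\alpha,d}(2,1,0)=-h(d,\alpha)\bigl(\tfrac 2\alpha+d+\alpha\bigr)$ does not vanish; the positivity of every factor makes this immediate, and hence $s_k\sim C_s\lambda_3^k$ with $C_s\neq 0$.

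Dividing \eqref{prima-stima} by the dominant term of the denominator and multiplying by $s_k^\eta$ yields
\[
s_k^\eta\,\Big|\alpha-\tfrac{m_k}{s_k}\Big|\,\Big|\beta-\tfrac{n_k}{s_k}\Big| \;\asymp\; \bigl(\lambda_3^{\eta-2}\,|\lambda_\star|^2\bigr)^k,
\]
where $|\lambda_\star|$ is the modulus of whichever of $\lambda_1,\lambda_2$ actually dominates the numerators of \eqref{prima-stima}. The right-hand side tends to $0$ precisely when $\eta<2\bigl(1-\tfrac{\log|\lambda_\star|}{\log\lambda_3}\bigr)$, so the problem reduces to identifying $|\lambda_\star|$ as a function of $d$.

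For $d\ge 4$ I would verify, by direct substitution into the explicit formula for $f^1_{\alpha,d}$ using \eqref{roots-relaz} and the defining relation $\alpha^3+d\alpha^2+\alpha=1$, that $f^1_{\alpha,d}(2,1,0)\ne 0$; since $|\lambda_1|>|\lambda_2|$, this gives $|\lambda_\star|=|\lambda_1|$ and the stated bound. The case $d=3$ rests on a genuine cancellation: factoring $p_3(\lambda)=(\lambda+1)(\lambda^2-2\lambda-1)$ yields $\lambda_1=-1$, $\lambda_3=1+\sqrt 2$, $\lambda_2=1-\sqrt 2$, $\alpha=\sqrt 2-1$, $h(3,\alpha)=\sqrt 2$, and a short algebraic check reduces the bracket defining $f^1_{\alpha,3}(2,1,0)$ to $4-8\alpha+2\alpha\sqrt 2\,(\alpha-1)$, which vanishes upon substituting $\alpha^2+2\alpha-1=0$. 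Hence the numerator is governed by $f^2_{\alpha,3}(2,1,0)\lambda_2^k$ (nonzero by the same kind of check), so $|\lambda_\star|=|\lambda_2|$; combined with the identity $|\lambda_2|\lambda_3=|\lambda_1\lambda_2\lambda_3|=1$ (enforced by $|\lambda_1|=1$), the bound becomes $\eta<4$.

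The main obstacle is the algebraic dichotomy: proving $f^1_{\alpha,d}(2,1,0)\ne 0$ for every $d\ge 4$ while exhibiting the explicit vanishing at $d=3$. The cancellation at $d=3$ is a true coincidence, equivalent to $(2,1,0)^T$ lying in $\mathrm{span}(v_2,v_3)$, and it is this accident together with the reciprocal identity $|\lambda_2|\lambda_3=1$ available precisely when $|\lambda_1|=1$ that accounts for the improvement from the naive exponent $2$ (what plugging $|\lambda_1|=1$ into the $d\ge 4$ formula would predict) to the sharper exponent $4$.
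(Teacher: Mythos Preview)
Your proposal is correct and follows essentially the same route as the paper: apply Proposition~\ref{prop:cubic-uno} with $(q,p,r)=(2,1,0)$, use $g^3_{\alpha,d}(2,1,0)\neq 0$ to get $s_k\asymp\lambda_3^k$, and then split according to whether $f^1_{\alpha,d}(2,1,0)$ vanishes, checking the explicit cancellation at $d=3$ and invoking $\lambda_2=-\lambda_3^{-1}$ to reach $\eta<4$. Your added remarks (the eigenspace interpretation of the cancellation and the use of $\lambda_1\lambda_2\lambda_3=1$) are nice touches, but neither you nor the paper actually carries out the verification that $f^1_{\alpha,d}(2,1,0)\neq 0$ for all $d\ge 4$; both leave it as an asserted computation.
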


\begin{proof}
    We apply Proposition \ref{prop:cubic-uno} with $(q,p,r) = (2,1,0)$
    together with the relations \eqref{eigen-md}. It follows
    \[
        s_k^\eta \left|\alpha-\frac{m_k}{s_k}\right|
        \left|\beta-\frac{n_k}{s_k}\right| \le c'(\alpha,d) \Big(
        g^3_{\alpha,d}(2,1,0) \lambda_3^k + o(\lambda_3^k)
        \Big)^{\eta-2} \Big( f^1_{\alpha,d}(2,1,0) \lambda_1^k +
        o(\lambda_1^k) \Big)^2
    \]
    where $g^3_{\alpha,d}(2,1,0)$ and $f^1_{\alpha,d}(2,1,0)$ do not
    vanish for $d\ge 4$. The result for this case immediately follows.\\[0.15cm]
    The case $d=3$ is particular since $\alpha=-1+\sqrt{2}$ is a
    quadratic irrational. It also follows $\beta = 3-2\sqrt{2}$,
    $h(3,\alpha) = \sqrt{2}$, and
    \[
        \lambda_1 = -1 , \quad \lambda_2 = 1-\sqrt{2} , \quad
        \lambda_3 = 1+\sqrt{2} .
    \]
    Moreover
    \[
        f^1_{\alpha,3}(2,1,0)= c''(\alpha,3) \Big( 4-8\alpha
        +2(\alpha^2-\alpha) h(3,\alpha) \Big) = 0
    \]
    and $f^2_{\alpha,3}(2,1,0)$ and $g^3_{\alpha,3}(2,1,0)$ do not
    vanish. Since $\lambda_2 = - \lambda_3^{-1}$, it follows that
    \[
        s_k^\eta \left|\alpha-\frac{m_k}{s_k}\right|
        \left|\beta-\frac{n_k}{s_k}\right| \le c'''(\alpha,3) \Big(
        g^3_{\alpha,3}(2,1,0) \lambda_3^k + o(\lambda_3^k)
        \Big)^{\eta-2} \lambda_3^{-2k}
    \]
    and the thesis follows.
\end{proof}

The result for $d\ge 4$ can be improved if we change the construction
of the approximations as explained in Remark~\ref{rem-diff-point}. For
the moment we leave this problem and the study of the speed of the
approximations for other real pairs to future research.


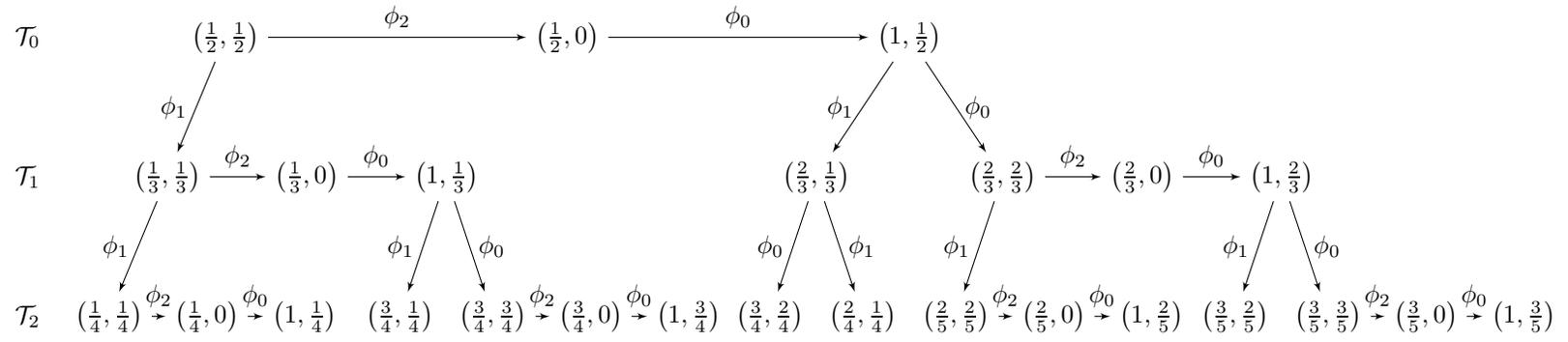
\begin{sidewaysfigure}
    \vspace{15cm}
    \begin{tikzpicture}[
    level 1/.style = {sibling distance=2.5cm},
    level 2/.style = {sibling distance=2.5cm},
    level 3/.style = {sibling distance=4cm},
    level 4/.style = {sibling distance=2cm},
    level distance = 3cm,
    edge from parent/.style = {draw,-latex'},
    every node/.style       = {-latex'},
    scale=0.64
    ]
    \def\d{4.35}
    \def\hd{0} 
    \def\hdd{-0.9}   
    \def\ld{2.5}   

    \node at (-4.25,0) {$\mathcal{T}_0$};
    \node at (-4.25,-3) {$\mathcal{T}_1$};
    \node at (-4.25,-6) {$\mathcal{T}_2$};

    \node {$\triple{1}{1}{2}$} [grow=down]
    child {
      node {$\triple{1}{1}{3}$}
      child {
        node {$\triple{1}{1}{4}$}
        child [grow=right] {
          node at (\hdd,0) {$\singy{1}{4}{0}$}
          child [grow=right] {
            node at (\hdd,0) {$\singx{1}{1}{4}$} [grow=down]
            edge from parent node[above] {$\phi_0$}}
          edge from parent node[above] {$\phi_2$}}
        edge from parent node[left] {$\phi_1$}}
      child [grow=right] {
        node at (\hd,0) {$\singy{1}{3}{0}$}
        child [grow=right] {
          node  at (\hd,0) {$\singx{1}{1}{3}$} [grow=down]
          child {
            node {$\triple{3}{1}{4}$}
            edge from parent node[left] {$\phi_1$}}
          child {
            node {$\triple{3}{3}{4}$}
            child [grow=right] {
              node at (\hdd,0) {$\singy{3}{4}{0}$}
              child [grow=right] {
                node at (\hdd,0) {$\singx{1}{3}{4}$} [grow=down]
                edge from parent node[above] {$\phi_0$}}
              edge from parent node[above] {$\phi_2$}}
            edge from parent node[right] {$\phi_0$}}
          edge from parent node[above] {$\phi_0$}
        }
        edge from parent node[above] {$\phi_2$}
      }
      edge from parent node[anchor=east] {$\phi_1$}
    }
    child [grow=right] {
      node at (\d,0) {$\singy{1}{2}{0}$}
      child [grow=right] {
        node  at (\d,0) {$\singx{1}{1}{2}$} [grow=down]
        child {
          node {$\triple{2}{1}{3}$} [grow=down]
          child {
            node {$\triple{3}{2}{4}$}
            edge from parent node[left] {$\phi_0$}}
          child {
            node {$\triple{2}{1}{4}$}
            edge from parent node[right] {$\phi_1$}}
          edge from parent node[left] {$\phi_1$}}
        child {
          node {$\triple{2}{2}{3}$}
          child  {
            node {$\triple{2}{2}{5}$}
            child [grow=right] {
              node at (\hdd,0) {$\singy{2}{5}{0}$}
              child [grow=right] {
                node at (\hdd,0) {$\singx{1}{2}{5}$} [grow=down]
                edge from parent node[above] {$\phi_0$}}
              edge from parent node[above] {$\phi_2$}}
            edge from parent node[left] {$\phi_1$}}
          child [grow=right] {
            node at (\hd,0) {$\singy{2}{3}{0}$}
            child [grow=right] {
              node at (\hd,0) {$\singx{1}{2}{3}$} [grow=down]
              child {
                node {$\triple{3}{2}{5}$}
                edge from parent node[left] {$\phi_1$}}
              child {
                node {$\triple{3}{3}{5}$}
                child [grow=right] {
                  node at (\hdd,0) {$\singy{3}{5}{0}$}
                  child [grow=right] {
                    node at (\hdd,0) {$\singx{1}{3}{5}$}
                    edge from parent node[above] {$\phi_0$}}
                  edge from parent node[above] {$\phi_2$}}
                edge from parent node[right] {$\phi_0$}}
              edge from parent node[above] {$\phi_0$}
            }
            edge from parent node[above] {$\phi_2$}
          }
          edge from parent node[right] {$\phi_0$}}
        edge from parent node[above] {$\phi_0$}
      }
      edge from parent node[above] {$\phi_2$}
    };
\end{tikzpicture}
    \caption{The levels $\mathcal{T}_{0}$, $\mathcal{T}_{1}$ and $\mathcal{T}_{2}$ of the Triangular tree generated through the rules (R1)-(R4).}\label{fig:tree-maps}
\end{sidewaysfigure}


\begin{thebibliography}{ABC}

\bibitem{dual-approach} S. Assaf et al., \emph{A dual approach
    to triangle sequences: a multidimensional continued fraction
    algorithm}, Integers 5.1 (2005), Paper A08.

\bibitem{dioph-review} V. Beresnevich, F. Ram\'irez, S. Velani, \emph{Metric Diophantine approximation: aspects of recent work}, in ``Dynamics and analytic number theory'', pp. 1--95, London Math. Soc. Lecture Note Ser., 437, Cambridge Univ. Press, Cambridge, 2016

\bibitem{berthe1} V. Berth\'e, D. H. Kim, \emph{Some constructions for the higher-dimensional three-distance theorem}, Acta Arith. \textbf{184} (2018), no. 4, 385--411

\bibitem{berthe2} V. Berth\'e, W. Steiner, J. Thuswaldner, \emph{Multidimensional continued fractions and symbolic codings of toral translations}, arXiv:2005.13038 [math.DS]

\bibitem{brentjes} A.J. Brentjes, ``Multidimensional continued fraction algorithms''.
Mathematical Centre Tracts, 145. Mathematisch Centrum, Amsterdam, 1981.

\bibitem{BI} C. Bonanno, S. Isola, \emph{Orderings of the rationals and dynamical systems}, Colloq. Math. \textbf{116} (2009), no. 2, 165--189.

\bibitem{cas} C. Bonanno, A. Del Vigna, S. Munday, \emph{A slow
    triangle map with a segment of indifferent fixed points and a
    complete tree of rational pairs}, arXiv:1904.07095 [math.DS]

\bibitem{garr} T. Garrity, \emph{On periodic sequences for algebraic
    numbers}, J. Number Theory \textbf{88} (2001), no. 1, 86--103.

\bibitem{hardy} G. H. Hardy and E. M. Wright, ``An introduction to the
  theory of numbers''. Oxford University Press, Fifth Edition, 1980.

\bibitem{haynes} A. Haynes, J. Marklof, \emph{Higher dimensional Steinhaus and Slater problems via homogeneous dynamics}, Ann. Scient. \'Ec. Norm. Sup. \textbf{53} (2020), no. 2, 537--557 

\bibitem{IK} M. Iosifescu, C. Kraaikamp, ``Metrical theory of continued fractions''.
Mathematics and its Applications, 547. Kluwer Academic Publishers, Dordrecht, 2002.

\bibitem{isola:cf} S. Isola, \emph{Continued Fractions and Dynamics}, Applied Mathematics \textbf{5}
  (2014) 1067--1090.

\bibitem{munday:iet} M. Kesseb\"ohmer, S. Munday, B.O. Stratmann,
  ``Infinite ergodic theory of numbers''. De Gruyter Graduate. De
Gruyter, Berlin, 2016.

\bibitem{schmidt-book}  W.M. Schmidt, ``Diophantine approximation''. Lecture Notes in Mathematics, 785. Springer, Berlin, 1980.

\bibitem{schw-book} F. Schweiger, ``Multidimensional continued fractions''. Oxford Science Publications. Oxford University Press, Oxford, 2000.

\end{thebibliography}
\end{document}